\documentclass[a4paper,11pt]{amsart}
\usepackage[top=2.2cm,left=2.8cm,right=2.8cm,bottom=2.8cm]{geometry}
\usepackage[foot]{amsaddr}

\usepackage{hyperref}
\hypersetup{
    colorlinks,
    citecolor=green,
    filecolor=black,
    linkcolor=blue,
    urlcolor=blue
}
\hypersetup{linktocpage}

\usepackage{cite}
\usepackage{graphicx}
\usepackage{amsbsy}
\usepackage{latexsym}
\usepackage{amsfonts}
\usepackage{amssymb}
\usepackage{upgreek}
\usepackage[usenames]{color}
\usepackage{amsmath,amsthm}
\usepackage{enumerate}
\usepackage{mathrsfs}
\usepackage{stmaryrd}
\usepackage{dsfont}
\usepackage{varwidth}
\usepackage{pdfpages}
\usepackage{tikz}

\newcommand{\eps}{\varepsilon}

\DeclareMathOperator{\rank}{rank}

\DeclareMathOperator*{\argmin}{arg\,min}

\providecommand{\abs}[1]{\lvert#1\rvert}

\providecommand{\norm}[1]{\lVert#1\rVert}

\providecommand{\biggnorm}[1]{\biggl\lVert#1\biggr\rVert}

\renewcommand{\Re}{\operatorname{Re}}

\newtheorem{theorem}{Theorem}
\newtheorem{lemma}[theorem]{Lemma}
\newtheorem{prop}[theorem]{Proposition}

\theoremstyle{definition}

\newtheorem{example}[theorem]{Example}

\theoremstyle{remark}
\newtheorem{remark}[theorem]{Remark}

\numberwithin{equation}{section}

\theoremstyle{plain}

\newcommand{\cW}{\mathcal{W}}

\newcommand{\Chi}{\raise .3ex
\hbox{\large $\chi$}}

\newcommand{\R}{\mathbb{R}}

\newcommand{\N}{\mathbb{N}}
\newcommand{\Z}{\mathbb{Z}}
\newcommand{\C}{\mathbb{C}}

\usepackage[section]{algorithm}
\usepackage{algorithmicx}
\usepackage{algpseudocode}

\usepackage{todonotes}

\title{Iterative thresholding low-rank time integration}
\author{Markus Bachmayr, Matthieu Dolbeault, Polina Sachsenmaier}
\address{Institut f\"ur Geometrie und Praktische Mathematik, RWTH Aachen University, Templergraben 55, 52062 Aachen, Germany}
\email{\{bachmayr,dolbeault,sachsenmaier\}@igpm.rwth-aachen.de}

\thanks{M.B. acknowledges funding by Deutsche Forschungsgemeinschaft (DFG, German Research Foundation) -- project number 442047500/SFB 1481 \emph{Sparsity and Singular Structures} and by the European Union (ERC, COCOA, 101170147). M.D.\ was supported by Deutsche Forschungsgemeinschaft (DFG, German Research Foundation) -- project number 442047500/SFB 1481 \emph{Sparsity and Singular Structures}. P.S.\ was supported by Deutsche Forschungsgemeinschaft (DFG, German Research Foundation) -- project number 320021702/GRK2326 \emph{Energy, Entropy, and Dissipative Dynamics}.}

\begin{document}

\maketitle

\begin{abstract}
We develop time integration methods in low-rank representation that can adaptively adjust approximation ranks to achieve a prescribed accuracy, while ensuring that these ranks remain proportional to the corresponding best approximation ranks. Our approach relies on an iterative scheme
combined with soft thresholding of the iterates. 
A model case of a time-dependent Schr\"odinger equation with low-rank matrix approximation is analyzed in detail, and the required modifications for second-order parabolic problems are described. Numerical tests illustrate the results for both cases.

  \smallskip
  \noindent \emph{Keywords.}  low-rank approximation, evolution equations, quasi-optimal ranks
\smallskip

\noindent \emph{Mathematics Subject Classification.} Primary 65F55, 65M12; Secondary 65Y20, 65L70
\end{abstract}

\section{Introduction}

Standard numerical methods for solving partial differential equations (PDEs), when applied to problems on domains of large dimension, typically suffer from the \emph{curse of dimensionality}: namely, their computational cost, as well as the space required to store the solution, scale exponentially in the dimension, making the methods impractical even at low resolution. In many cases of interest, however, such limitations can be overcome by appropriate compressed representations of approximate solutions, in particular by low-rank tensor representations. 

Here we consider both dispersive and parabolic PDEs. The former arise as high-dimensional time-dependent problems in quantum mechanics, with the linear Schrö\-din\-ger equation
\begin{equation}
\label{eq_Schrodinger}
i\partial_t u=-\Delta u+Vu
\end{equation}
 posed over $\R^{3N}$ when describing a system of $N$ particles in $\R^3$ subject to the potential function $V$. When working with low-rank approximations, such problems are commonly formulated in the framework of second quantization in terms of occupation numbers of orbitals. 
Another motivating class of examples, this time of parabolic type, are Fokker-Planck equations of the form
\begin{equation}
\label{eq_FokkerPlanck}
\partial_t u={\rm div}(D\nabla u+u P),
\end{equation}
which describe probability densities $u$ for trajectories of stochastic processes, as in the overdamped Langevin equation in molecular dynamics. Here, the dimension of the underlying space corresponds to the total number of degrees of freedom of the considered molecular system, the matrix $D$ is a diffusion coefficient, and the vector field $P$ corresponds to the acting forces.

Approximations based on low-rank tensor representations have a long history for both of these classes of problems. In many cases of interest, such representations can avoid the curse of dimensionality by a reduction to sums of tensor products of lower-dimensional factors. However, their efficiency depends crucially on the sizes of certain rank parameters in the tensor formats, which reduce to matrix ranks in the case of tensor order two. In this regard, a central issue in working with such low-rank representations is that they do not form linear spaces: under vector space operations and other mappings, their rank parameters generally increase. For reasonable computational costs, methods operating on low-rank tensor representations thus need to ensure that rank parameters remain under control.

For time-dependent PDEs, several approaches for low-rank approximation exist that use different mechanisms for controlling ranks. 
These range from methods that keep the ranks fixed, such as dynamical low-rank approximation, which may lead to uncontrolled errors, to methods that approximate standard time stepping schemes up to any desired accuracy, but may instead lead to unnecessarily large ranks. While for methods based on space-time variational formulations, first results on how to achieve controlled ranks for given guaranteed errors exist, the availability of suitable such formulations can be a demanding condition.

Our main result in this work is a method for time integration that does not require a space-time variational formulation, but still yields provably convergent approximations while at the same time ensuring control of the ranks of the produced approximations and all intermediate quantities. More precisely, we show that these ranks remain comparable to their natural benchmark quantities, that is, to the best approximation ranks for the sought solutions at the achieved accuracy.

We focus here on representations in terms of matrices in low-rank representation, which involve fewer technicalities than higher-order tensors. However, for our purposes, they exhibit all main difficulties of the higher-order case, since the target tensor formats for higher-dimensional problems, such as tensor trains and hierarchical tensors, are themselves based on matrix ranks of certain matricizations (see, for example, the monograph \cite{Hackbusch:12tensorspaces} and the surveys \cite{GKT13,Bachmayr23}). In particular, the standard methods for error-controlled rank reduction that are available for these higher-order tensor formats reduce to truncation or thresholding of the singular value decomposition in the matrix case.
The schemes proposed here can thus be applied to general hierarchical tensor representations without essential modifications.

\subsection{Rationale and main result}

We consider here time-dependent problems in two spatial variables on a product domain $\Omega = \Omega_1 \times \Omega_2$ with $\Omega_i \subseteq \R^{d_i}$ for $i=1,2$, using low-rank approximations to obtain a reduction to operations on lower-dimensional factors; this is also the main aim in more general higher-dimensional problems. 

The low-rank approximations that we aim to construct for solutions $u \in C([0,T]; L^2(\Omega))$ are of the basic form
\begin{equation}\label{eq:lrapprox}
   u(t, x_1, x_2) \approx \sum_{k=1}^{r(t)} u^{(1)}_k(t, x_1)\, u^{(2)}_k(t, x_2),\quad\text{for}\quad t\in [0,T], \quad x_1 \in \Omega_1, \quad x_2 \in \Omega_2,
\end{equation}
where $u^{(i)}_k  \in L^2(\Omega_i)$ for $i=1,2$ and $k = 1,\ldots, r(t)$. Here a main difficulty is to adapt the rank parameter $r(t)$ to a prescribed error.

For each fixed $t$, low-rank best approximations of $(x_1,x_2)\mapsto u(t, x_1, x_2)$ as an element of $L^2(\Omega) = L^2(\Omega_1)\otimes L^2(\Omega_2)$ for each given rank can be obtained by singular value decomposition, where the error of low-rank approximation is given by the $\ell^2$ norm of the sequence of omitted singular values. For each given $\varepsilon > 0$, these tail norms determine a minimum rank $\hat r_\varepsilon(t)$ such that the best approximation of this rank has $L^2$-error at most $\varepsilon$.

Our main objective is to obtain, for any prescribed accuracy $\varepsilon$, low-rank approximations as in \eqref{eq:lrapprox} with quasi-optimal ranks $r(t)$ -- that is, we aim at a method that produces approximations with $r(t)$ comparable, up to constants, to the ranks $\hat r_\varepsilon(t)$ of \emph{best} low-rank approximations of the same accuracy.
At the same time, the ranks arising in the computational method should also remain controlled in terms of the ranks of the final result. 

This requires a balance between error accumulation and applying sufficient rank truncations.
Such a balance is difficult to achieve in step-truncation methods based on classical time stepping schemes: to ensure sufficiently small cumulative errors, also the rank truncation tolerances for each time step need to tighten accordingly, and a comparison to best approximation ranks is difficult to achieve -- indeed, no result of this type appears to be known for such methods. 

In view of the extension of our approach to higher dimensions, what we mainly aim for is a reduction to operations on lower-dimensional factors in low-rank representations. At the same time, the methods that we consider here do not rely on direct manipulations of lower-dimensional components of tensor representations and are thus directly generalizable to various low-rank formats for higher-order tensors. Especially in the high-dimensional case, the overall computational costs are then driven mainly by the ranks of tensor representations. 

To obtain ranks that are appropriate for the achieved approximation error, we use iterative refinement of approximate solutions in basis representations in time, applied on subintervals of length $h$ of the given total time interval $[0,T]$. Here, unlike standard time stepping methods, we do not necessarily let $h$ tend to zero to enforce convergence, but rather consider fixed (or slowly decreasing) $h$ in combination with a sufficiently refined temporal basis in each subinterval. Unlike space-time variational formulations, we rely only on standard integral formulations of evolution problems with uniform approximation in time.

To ensure that $h$ can be chosen to be relatively large and remains in particular independent of the spatial discretization, we use techniques similar to exponential integrators \cite{HO:10}, but with particular adaptations to the low-rank setting.
The basic idea was presented by Lawson in \cite{Lawson67} for Runge-Kutta methods and later used in exponential integrators \cite{KT05}, see also \cite{HLO20}. 

For the Laplacian $\Delta$ on $\Omega$, we have $\Delta = \Delta_1 \otimes I + I \otimes  \Delta_2$, where $\Delta_i$ is the Laplacian on $\Omega_i$ for $i=1,2$. For $u_i \in L_2(\Omega_i)$ and any $z \in \C$ with $\Re z \geq 0$,
\begin{equation}\label{eq:expfactors}
   e^{z \Delta} (u_1 \otimes u_2) = e^{z \Delta_1} u_1 \otimes e^{z \Delta_2} u_2.
\end{equation}
As a consequence, the action of $e^{z\Delta}$ leaves ranks unchanged and thus can directly be reduced to lower-dimensional operations.
Our starting point to make use of this fact is Duhamel's formula, which in the case of the time-dependent Schr\"odinger equation \eqref{eq_Schrodinger} with initial data $u_0$ reads
\[
u(t) = e^{i t \Delta}u_0 - i \int_{0}^{t} e^{i(t - s)\Delta} V u(s) \, ds.
\]
For this class of problems, we additionally use the (rank-preserving) transition to the \textit{twisted variable} $v(t) = e^{-it \Delta}u(t)$ to arrive at the formulation
\begin{equation}\label{eq:twisted_intro}
		v(t)  =  u_0 - i \int_{0}^{t} e^{-i s \Delta} V e^{i s \Delta} v(s) \, ds  \,.
\end{equation}
We now combine this formulation with a choice of temporal basis functions, where we use polynomials of degree $J$ on each subinterval; although other choices could be of interest, we focus on this particular case in this work. 

Under appropriate assumptions on $V$, \eqref{eq:twisted_intro} can serve as the basis for fixed-point iterations for refining approximations of $v$ on each subinterval. These converge for subinterval sizes $h$ that depend only on $V$, but not on the spatial discretizations.
The iterations are performed by considering \eqref{eq:twisted_intro} in suitable sets of collocation points. As a consequence, the corresponding fixed points are steps of high-order implicit Runge-Kutta schemes on each subinterval. 
For the particular choice of Gauss-Legendre points, we obtain the Gauss-type methods, which for the unitary evolution of \eqref{eq_Schrodinger} have favorable norm and energy preservation properties.

We arrive at an iterative scheme that successively solves on each subinterval using Picard iteration or a variant known as \emph{spectral deferred correction}, where the approximate solutions in each collocation point are stored in low-rank form. 
The corresponding ranks increase in each iteration by a factor depending on $J$ and representation rank of $V$. We subsequently reduce them in each step by a \emph{soft thresholding} of singular values, which is a non-expansive operation in $L^2(\Omega)$, with a new technique for controlling the thresholds inspired by \cite{BS17}.

In our main result, Theorem~\ref{thm:main}, we show that with the strategy for adjusting error tolerances and thresholding parameters that we propose for the numerical scheme, we achieve a guaranteed error 
\[  \varepsilon  = C_{J,V}   h^{J+1} \exp( c_{J,V} T )   \]
for some positive constants $C_{J,V}$ and $c_{J,V}$, and obtain quasi-optimal bounds on the ranks $r(t)$ of all computed iterates. That is, if for an $s>0$,  
\[ \max_t \hat r_\eta(t) \lesssim \eta^{-\frac1s}  \]
with the best approximation rank for $\eta >0$ as defined above, then also
\[
  \max_t r(t) \lesssim h^{-3} \bigl(1 + J \rank(V)\bigr)\eta^{-\frac1s},
\]
where $\rank(V)$ is the rank of the function $V$ (for simplicity, we here assume that the potential is sufficiently smooth and of bounded ranks). In other words, the largest ranks produced by the numerical scheme remain bounded up to a constant by the ranks of the result of applying the fixed point mapping to a low-rank best approximation of the solution.
Since the constant depends also on $h^{-3}$, the subinterval size should not be chosen too small, and we rather aim to enforce convergence by refining the approximation in each subinterval, that is, by increasing $J$. In the more favorable case of exponential-type decrease of singular values, 
\[ \max_t \hat r_\eta(t) \lesssim  \bigl( 1+  \abs{\log \eta}  \bigr)^\beta  \]
with $\beta>0$, then accordingly
\[
  \max_t r(t) \lesssim h^{-3} \bigl(1 + J \rank(V)\bigr) \bigl( 1 + \abs{\log \eta} \bigr)^\beta.
\]

We focus on the case of time-dependent Schr\"odinger equations, for which we give a detailed analysis, but also describe the modifications required for the case of second-order parabolic problems motivated by \eqref{eq_FokkerPlanck}, where we use an adapted strategy based on Gauss-Radau points.

\subsection{Novelty and relation to previous work}
 
While the basic building blocks of the scheme considered here are known, they are combined in a particular manner suitable for low-rank approximations, and the main novelty lies in a careful choice of error tolerances and rank truncation thresholds so that guaranteed convergence is achieved while maintaining estimates in terms of the ranks of best low-rank approximations of comparable accuracy. 

Such bounds have been obtained previously for a specific type of method based on a space-time variational formulation. In such methods, instead of proceeding by time steps to update a spatial approximation, a joint approximation in spatial and temporal variables is gradually refined towards a sufficiently accurate approximation of the entire evolution. In the context of low-rank approximations, such methods were considered in \cite{Andreev:12,BEEN19,DEG24}, though without estimates for the computed ranks. However, such approaches are very well suited to ensure an appropriate balance between approximation errors and tensor ranks, and a method ensuring near-optimal ranks for high-dimensional parabolic problems has been proposed and analyzed in \cite{BF24}.
However, suitable well-posed space-time formulations do not exist for every problem of interest, and they often strongly constrain the norms in which approximation errors can be controlled.

Here we are thus interested in methods that do not require a well-posed space-time variational formulation as a starting point.
With this restriction, to the best of our knowledge, all previously known methods either ensure convergence or control ranks, but not both.
 
Dynamical low-rank approximation \cite{KL07} falls into the latter category of controlling ranks but not convergence.
It is based on approximations by substitute problems on manifolds of matrices (or tensors) of fixed ranks. 
In such schemes, the right-hand side is projected onto the tangent space of the current base point on the manifold, and error bounds can only be obtained under the assumption that the error in this projection is small, which need not be the case (see, for example, \cite[Rem.~7.2]{Bachmayr23} and \cite[Ex.~1]{LCK25}).
The Basis Update \& Galerkin (BUG) method \cite{CL22} (see also \cite{NR25} for high order extensions) is closely related to splitting integrators for dynamical low-rank approximation \cite{LubichOseledets:14}, but can be naturally modified to include the adaptation of ranks \cite{CKL22}. However, the issue that errors of tangent space projections need to be assumed to be small is still present in these methods.

There also exists a variety of methods that directly use low-rank representations of the right-hand side, rather than only its projections.  Step-truncation methods are based on performing time steps of a standard scheme (such as a Runge-Kutta method) in tensor format and then performing rank truncation, as in \cite{DRV21,RV:23,LCK25}. A variation with conceptual similarities to BUG integrators are methods that use the right-hand side for basis enrichment and thus avoid the restrictions imposed by tangent space projections, as in \cite{YW20,AC25}.
While in principle, such methods can be ensured to converge, the rank truncation tolerances need to be tightened with decreasing time step length, and the resulting ranks are unclear.

An important conceptual ingredient for the feasibility of our approach for PDE problems are concepts related to exponential integrators \cite{KT05}, in particular in a form originally suggested by \cite{Lawson67}.
Modern exponential integrators, however, generally rely on inverses of operators in the approximation of exponentials. Such methods have been used, for example, in combination with approximations on fixed-rank manifolds in \cite{CV:23,SV:24}. 
Inverses of operators such as the Laplacian, however, are generally not easy to realize in low-rank format -- in fact, one of the most reliable techniques for obtaining low-rank approximations of such inverses consists in a reduction to sums of exponentials, which in view of \eqref{eq:expfactors} is easier to handle in low-rank format. For this reason, unlike standard exponential integrators, the methods considered here rely only on operator exponentials of Kronecker rank one.

While we prove our main results for a scheme based on Picard iteration, we also consider a variant based on spectral deferred correction (SDC), an approach that was originally proposed in \cite{DGR00}, see also \cite{LM05,HZ07,Buvoli20}. Connections to Runge-Kutta methods and Picard iteration are highlighted in particular in \cite{Weiser15,CS19}. SDC has also recently been used in the context of low-rank methods for the construction of higher-order step-truncation methods \cite{LJC24}, considering also soft thresholding following \cite{BS17}, but without an analysis of approximation ranks.

The approach can be interpreted as a hybrid between time-stepping schemes and methods based on space-time variational formulations. 
It retains flexibility in its application to different classes of PDEs and achieves error control pointwise in time.

\subsection{Outline} 

In Section \ref{sec:fixedpoint}, we begin by devising the integral formulations and their discretizations on subintervals that form the basis of our schemes.
In Section \ref{sec:thresholding}, we describe rank reduction techniques that we use and their basic properties.
The proof of our main result is conducted in Section \ref{sec:analysis}.
In Section \ref{sec:parabolic}, we consider the adaptation of our approach to parabolic problems.
Finally, we present results of numerical experiments in Section \ref{sec:numexp} and conclude with some open problems.

\section{Iterative fixed-point methods}\label{sec:fixedpoint}

Our aim is to construct high-order methods for low-rank representations of evolution problems 
\begin{equation*}
	 \partial_t u(t) = \mathcal{A}_t \bigl( u(t) \bigr)
\end{equation*}
on $L^2(\Omega)$, with a potentially unbounded operator $\mathcal{A}_t$.
In particular, we focus on the case of affine linear differential equations of the form
\begin{equation*}
  \mathcal{A}_t \bigl( v(t) \bigr) = ( \mathcal L + \mathcal G_t ) v(t) + f(t)
\end{equation*}
 where $\mathcal L$ and $\mathcal G_t$ each denote linear operators on $L^2(\Omega)$ and $f(t)$ might encode some additional nonlinear information. We further assume that the operator $\mathcal L$ captures the majority of the stiffness of the resulting differential operator. In order to simplify the notation, we denote by $\| \cdot \|$ the $L^2(\Omega)$ norm and by $\langle \cdot, \cdot\rangle$ the corresponding inner product. Additionally, we define
 \begin{equation*}
 	\mathcal W(s,t) = L^\infty \bigl(s,t; L^2(\Omega)\bigr), \quad \text{for} \quad 0 \leq s \leq t \leq T.
 \end{equation*}
 
 While it is rather complicated to provide both rank bounds and a sufficient accuracy, for general time-stepping methods, works like \cite{BS17} and \cite{bachmayr_adaptive_2015} suggest that iterative solvers such as fixed-point iterations in low-rank format are able to retain quasi-optimal rank control on the iterates. The idea would thus be to subdivide the time interval $[0,T]$, on which we would like to evolve the above problem, into $n \in \N$ smaller intervals of size $h > 0$ and to approximate solutions on those iteratively using a fixed-point iteration combined with low-rank techniques. Once a sufficiently accurate solution has been obtained on a subinterval, one can move to the next one and repeat the procedure.
 
 A classical approach for bounded $\mathcal A_t$ would consist in using the integral formulation
 \begin{equation}
 	u(t) = u(t_0) + \int_{t_0}^{t} \mathcal A_t(u(s)) \, ds \label{picard_iter}
 \end{equation}
for $t_0 = 0$ and Picard iteration to successively obtain approximations at the discrete points $h, 2h, \dots, Nh \in [0,T]$. This basic scheme is then often combined with approximating the exact solution $u(t)$ on a resulting subinterval of the form $t \in [nh, (n+1)h]$ using Chebyshev polynomial series, see for example \cite{clenshaw_numerical_1957} and \cite{clenshaw_solution_1963} for early contributions.
 
 It is well-known that stability constraints for the convergence of fixed-point iterations depend strongly on the Lipschitz constant of the operator $\mathcal{A}_t$. It is thus natural to try to reformulate the integral formulation, which is used as the base of iterative solvers in order to reduce the imposed step size restriction, since the ranks of computed low-rank approximations can potentially increase exponentially in the number of time intervals.
 
\subsection{Modified fixed-point formulation}

In physical applications, such as the time-de\-pen\-dent Schrödinger equation, it turns out that the stiffness of the time-independent linear operator $\mathcal L :
L^2(\Omega) \rightarrow L^2(\Omega)$ is mainly responsible for the magnitude of the Lipschitz constant. Applying the discrete variation of constants formula to \eqref{picard_iter}, one obtains
\begin{equation*}
	u(t) = e^{(t-t_0) \mathcal L}u(t_0) + \int_{t_0}^{t} e^{(t - s) \mathcal L} \bigl(\mathcal G_s u(s) + f(s) \bigr) \, ds.
\end{equation*}
The favorability of this formulation can be seen by comparing the resulting Lipschitz constants of our main application.

\begin{example}[Schrödinger equation with time-dependent potential]
	In the setting of the Schrödinger equation \eqref{eq_Schrodinger} with a potentially time-dependent potential $V_t$ we have 
	\begin{equation*}
		\mathcal L = i \Delta, \quad \mathcal G_t = -i V_t \quad \text{and} \quad f = 0.
	\end{equation*}
	The resulting Lipschitz constant 
	\begin{equation*}
		\norm{ -\Delta + V_t }_{\mathcal W(0,T) \rightarrow \mathcal W(0,T)}
	\end{equation*}
	is unbounded due to the Laplacian and in computations, it will depend strongly on the spatial discretization: if we use for example an $L^2$-orthonormal basis expansion with sine product basis functions, the Laplacian in matrix form becomes a diagonal operator with entries being proportional to $\Delta_{k,k} \eqsim k^2$, which is mainly responsible for the resulting stiffness. Thus, we would need to progressively decrease the stepsize $h$ in order to obtain a better spatial resolution.
	
	This issue can be circumvented by instead considering \textit{Duhamel's formula}
	\begin{equation*}
		u(t) = e^{i(t - t_0) \Delta}u(t_0) - i \int_{t_0}^{t} e^{i(t - s)\Delta} V_s u(s) \, ds,
	\end{equation*}
	which is equivalent to the Picard integral formula
	\begin{equation}
		v(t)  = v(t_0) + \int_{t_0}^t \mathcal F_s v(s) \, ds,\qquad \mathcal F_s=-ie^{-i s \Delta} V_s e^{i s \Delta},
		\label{twisted_picard}
	\end{equation}
	for the \textit{twisted variable} $v(t) = e^{-it \Delta}u(t)$ which formally satisfies
	\begin{equation}
	\label{eqmathcalF}
		\partial_t v(t) = \mathcal F_t v(t)=-ie^{-it \Delta} V_t e^{it \Delta} v(t).
	\end{equation}
	This change of variable, also known as Lawson method for Runge-Kutta methods \cite{Lawson67}, reduces the stiffness of the PDE by explicitly integrating the term in $-i\Delta$. Here, $V_t$ should be understood as an operator and not as a function, hence $e^{-it\Delta}V_t g (t) = e^{-it\Delta}(V_t g)(t)$ for $g \in \mathcal W(0,T)$. In this setting, we apply the resulting scheme on the twisted variable and after having obtained an approximation on the whole time interval $[0,T]$, we can revert the applied exponential transformations and `untwist' the approximations.
	
	Due to the norm preservation property 
	\begin{equation*}
		\| e^{i t \Delta}u(t) \| = \| u(t) \|, \quad \text{for all} \quad t\in \mathbb{R},
	\end{equation*}
	the Lipschitz constant reduces to $(t_0 - t)$ multiplied with
	\begin{equation}
		 C_V =  \| V_t \|_{\mathcal W(0,T) \rightarrow \mathcal W(0,T)}. \label{C_V}
	\end{equation}
	In particular, it is robust with respect to the spatial discretization for a bounded potential. 
Note that more realistic potentials with Coulomb singularities can be treated using Strichartz estimates if we consider $L^2(\Omega)$ norms in time, see \cite{DEG24}.
In the following, we restrict our considerations to the case of a time-independent potential for notational convenience, even though the framework can also be extended to the case of a time-dependent potential. 
\end{example}

\begin{example}[Second-order parabolic equation with anisotropic diffusion] \label{ex:mod_pf_parabolic}
	We also consider a parabolic working example on $L^2(\Omega_1 \times \Omega_2)$ with anisotropic diffusion, given by
	\begin{equation}
		\partial_t u(t) = {\rm div}\biggl( \begin{pmatrix}a&b\\b&a\end{pmatrix} \nabla u(t)\biggr)
		+ f \label{eq_parabolic}
	\end{equation}
	for constants $a>2b>0$ and a source term $f \in L^2(\Omega_1 \times \Omega_2)$, which
	can be decomposed as
	\begin{equation*}
		\mathcal L = a \Delta, \quad \mathcal G_t = \mathcal G = 2b \partial_{x_1} \partial_{x_2} \quad \text{and} \quad f(t)=f.
	\end{equation*}
	We can again use Duhamel's formula, here in the form
	\begin{equation}
	\label{parabolic_duhamel}
	u(t) = e^{a(t-t_0) \Delta }u(t_0) + \int_{t_0}^{t} e^{a (t - s) \Delta } \bigl( 2 b \partial_{x_1} \partial_{x_2}  u(s) + f \bigr) \, ds,
	\end{equation}
	as our starting point. However, an approach using twisted variables as in \eqref{twisted_picard} is not applicable due to the unboundedness of $e^{- t \Delta}$ for $t>0$, and for the same reason, additional care is needed to use $e^{a (t - s) \Delta }$ only when $t>s$.
\end{example}

Since parabolic problems thus require a separate treatment, we postpone the discussion of this case to Section \ref{sec:parabolic}. 
In the following, we first focus on the time-dependent Schrödinger equation.

\subsection{Fixed-point formulations}

Given a subinterval of the form $[t_0,t_0+h]$, a (complex-valued) function $u\in \mathcal W(t_0,t_0+h)$, and a strictly increasing sequence of nodes
\[
t_0\leq t_1<\dots <t_J\leq t_0+h
\]
for some $J \in \mathbb N$, we can define the Lagrange polynomial interpolation of $u$ as
\[
t\mapsto \sum_{j=1}^{J} u(t_j) \ell_j(t),\qquad \ell_j(t)=\prod_{m\neq j}\frac{t-t_m}{t_j-t_m}.
\]

\subsubsection{Gauss-Legendre-Picard iteration}

A classical fixed-point iteration following directly from the Picard integral formulation in the twisted variables \eqref{twisted_picard} is then given by
\begin{equation}
	\Phi_{\mathrm{Picard}} (\mathbf v)(t) = v(t_0) + \int_{t_0}^{t}  \sum_{j=1}^{J} \mathcal F_{t_j}v_j \ell_j(s)\, ds, \label{eq:extension}
\end{equation}
for $\mathbf v=(v_j)_{1\leq j\leq J}$ an approximation of $(v(t_j))_{1\leq j\leq J}$.
This allows us to formulate a fixed-point application $\mathbf \Phi_{\mathrm{Picard}}:L^2(\Omega)^J\to L^2(\Omega)^J$ as
\begin{equation} \label{twisted_fixed_point_iter}
\mathbf \Phi_{\mathrm{Picard}}(\mathbf v)_j=\Phi_{\mathrm{Picard}} (\mathbf v)(t_j)=v(t_0)+ \sum_{m = 1}^{J} \omega_{j,m} \mathcal F_{t_m} v_m,
\end{equation}
where the quadrature weights are given by
\begin{equation*}
	\omega_{j,m} = \int_{t_0}^{t_j} \ell_m(s) \, ds, \quad \text{for} \quad 1 \leq j, m \leq J.
\end{equation*}

Naturally, one would like to maximize the degree of accuracy of the quadrature formula
\begin{equation}
\label{quadrature_approx}
	\int_{t_0}^{t_0+h} g(s) \, ds \approx  \sum_{m=1}^J \omega_m g(t_m), \quad \text{for} \quad g \in \mathcal W(t_0, t_0+h),
\end{equation}
when choosing the quadrature nodes $t_1, t_2, \dots, t_J$ and corresponding weights
\[
\omega_m = \int_{t_0}^{t_0 + h} \ell_m (s) \, ds, \quad \text{for} \quad 1\leq m\leq J.
\]
It is well-known that the maximal degree of accuracy, which can be attained by such a quadrature formula is $2J-1$, and that this bound can be reached only by the so-called \textit{Gaussian quadrature formulas}, see for example \cite{stoer_introduction_1980} for scalar-valued functions. 

\begin{example}[Gauss-Legendre quadrature]
	For this family of quadrature formulas, the $J$ nodes are defined as the roots of the $J$th Legendre polynomial, which are contained in the open interval $(t_0, t_0+h)$.
	
	The Gauss-Legendre quadrature nodes and weights can be computed efficiently using the Golub-Welsch algorithm \cite{GW69}. Defining the tridiagonal matrix
	\begin{equation}
		T_J = \begin{pmatrix}
			t_0+h/2 & \beta_1 & & & & 0 \\
			\beta_1 & t_0+h/2 & \beta_2 & & &  \\
			& \beta_2 & t_0+h/2 & \ddots & &   \\
			& & \ddots & \ddots & \beta_{J-2}&  \\
			& & & \beta_{J-2} & t_0+h/2 & \beta_{J-1} \\
			0 & &  & & \beta_{J-1} & t_0+h/2
		\end{pmatrix}, \label{tridiaggauss}
	\end{equation}
	where $\beta_m = \frac{mh}{2 \sqrt{4m^2-1}}$, it can be shown that the $J$ eigenvalues of $T_J$ correspond exactly to the Gauss-Legendre nodes $t_1, t_2, \dots, t_J$ and that the weights can be expressed as
$\omega_m = 2v_{m,1}^2 / \| v_m \|^2_2,$
	where $v_{1,1}, v_{2,1}, \dots, v_{J,1}$ denote the first entries of the eigenvectors $v_1, v_2, \dots, v_J$, respectively. 
\end{example}

In the following, we thus restrict our considerations to the use of the Gauss-Legendre quadrature for the approximation of the time-dependent Schrödinger equation. The advantages of these particular nodes will be discussed in the subsequent sections.

\begin{remark}[Spectral integration matrix]
We refer to \cite{DGR00} and the references therein for a discussion on the numerical stability properties of the \textit{spectral integration matrix} $(\omega_{j,m})_{1\leq j,m\leq J}$. By translation and scaling symmetries, its entries do not depend on $t_0$ and are proportional to $h$; they can be precomputed exactly using Gauss-Legendre quadrature in the smaller intervals $[t_0,t_j]$, $1 \leq j \leq J$.
\end{remark}

\begin{remark}[Extended fixed-point iteration and transition to boundary value] \label{rm:extended_fp}
To transition to the next interval,
following the approach described in \cite{HZ07}, we
approximate the boundary value by
\begin{equation}
\Phi_{\mathrm{Picard}} (\mathbf v)(t_0+h) =
v(t_0) + \sum_{m=1}^{J} \omega_m  \mathcal{F}_{t_m} v_m. \label{eq:boundary}
\end{equation}
One could also define the fixed-point formulation on the $J+1$ nodes $t_1,\dots,t_J,t_0+h$, 
however the last value is not actively used inside the fixed-point iterations.
\end{remark}

\subsubsection{Spectral Deferred Correction}

Another possible fixed-point iteration, first introduced in \cite{DGR00}, is spectral deferred correction.
Observe that \eqref{twisted_fixed_point_iter} can be rewritten inductively in $j$ as
\[
\mathbf \Phi_{\mathrm{Picard}}(\mathbf v)_j=\mathbf \Phi_{\mathrm{Picard}}(\mathbf v)_{j-1}+\sum_{m=1}^J \tilde \omega_{j,m}\mathcal F_{t_m}v_m,
\]
where $\mathbf \Phi_{\mathrm{Picard}}(\mathbf v)_0=v_0$ and
\begin{equation*}
	\tilde{\omega}_{j,m} = \omega_{j,m} - \omega_{j-1,m} = \int_{t_{j-1}}^{t_j} \ell_m(s) \, ds.
\end{equation*}
Given a low-order integrator $\Psi$ such that $\Psi_{s,t}(u(s))\approx u(t)$, SDC is constructed by adding to the above formula a correction, obtained by integrating the residual error from $t_{j-1}$ to $t_j$, yielding an approximation of higher order  \cite{Buvoli20}.
In order to keep control on the ranks of the updates,
we restrict our considerations
to explicit low-order integrators $\Psi$. The SDC iterations can then be written as
\begin{equation}
\label{eq:SDC}
\mathbf \Phi_{\mathrm{SDC}}(\mathbf v)_j=\mathbf \Phi_{\mathrm{SDC}}(\mathbf v)_{j-1}+\sum_{m=1}^J \tilde \omega_{j,m}\mathcal F_{t_m}v_m
+\Psi_{t_{j-1},t_j}\big(\mathbf \Phi_{\mathrm{SDC}}(\mathbf v)_{j-1}-v_{j-1}\big),
\end{equation}
where $\mathbf \Phi_{\mathrm{Picard}}(\mathbf v)_0=v_0$.
While the explicit Euler method
\begin{equation*}
	\Psi_{t_{j-1},t_j} = (t_j-t_{j-1})\mathcal F_{t_{j-1}}
\end{equation*}
seems quite standard \cite{CS19},
in the case of the Schrödinger equation, or more generally Hamiltonian systems, we prefer the symmetric integrator
\begin{equation*}
\Psi_{t_{j-1},t_j} = (t_j-t_{j-1})\mathcal F_{(t_{j-1}+t_j)/2}
\end{equation*}
motivated by \cite{KD25}, since the midpoint approximation of $\mathcal F$ is expected to be more accurate. 
Note that we do not exactly preserve the energy, time-reversibility or symplecticity, since the integrator remains explicit.
However, we only need to apply the potential operator $V$ once, which is advantageous for the rank control.

The transition to the subinterval boundary is then carried out analogously to \eqref{eq:boundary}.
 
\subsection{Reformulation in matrix form}

In order to better compare the Picard and SDC iterations and understand how the corresponding fixed-point formulations are defined on the Gauss-Legendre nodes, following \cite{huang_accelerating_2006}, we derive matrix versions of the corresponding fixed-point formulations.

For the Picard iteration, we simply have
\begin{equation}
\mathbf \Phi_{\mathrm{Picard}} \mathbf v= \mathds{1} \otimes v_0 + \mathbf{\Sigma} \mathbf v,
\label{eq:matrix_picard}
\end{equation}
where $\mathbf{\Sigma}=(\omega_{j,m} \mathcal F_{t_m})_{1\leq j,m\leq J}$.
Regarding SDC, we start by iterating \eqref{eq:SDC} to obtain
\begin{equation*}
\mathbf \Phi_{\mathrm{SDC}}(\mathbf v)_j = \mathbf \Phi_{\mathrm{Picard}}(\mathbf v)_j + \sum_{i=1}^j  \Psi_{t_{i-1}, t_i} \bigl( \mathbf \Phi_{\mathrm{SDC}}(\mathbf v)_{i-1} - v_{i-1} \bigr).
\end{equation*}
One can then rearrange the above equation as
\begin{equation}
	\bigl( \mathbf{I} - \mathbf{\Psi} \bigr) \mathbf \Phi_{\mathrm{SDC}}(\mathbf v) = \mathds{1} \otimes v_0 + \bigl( \mathbf{\Sigma} - \mathbf{\Psi} \bigr) \mathbf{v},\label{eq:matrix_sdc}
\end{equation}
where the matrix $\mathbf\Psi=(\mathds 1_{j>i}\Psi_{t_{i}, t_{i+1}})_{1\leq j,i\leq J}$
is lower triangular, allowing to explicitly compute $\mathbf \Phi_{\mathrm{SDC}}(\mathbf v)$ one coordinate at a time.

These matrix equations will allow us to adapt the soft thresholded fixed point method from \cite{BS17}, originally designed for the Richardson iteration, to our setting of time integration, as will be seen in particular in Lemma~\ref{lem_rank_uk}.

\subsection{Collocation method limit of the fixed-point iterations}

Considering the formulations \eqref{eq:matrix_picard} and \eqref{eq:matrix_sdc}, it becomes evident that the solutions to the fixed-point problems $\mathbf \Phi_{\mathrm{Picard}}(\mathbf v^*)=\mathbf v^*$ and $\mathbf \Phi_{\mathrm{SDC}}(\mathbf v^*)=\mathbf v^*$ coincide and satisfy
\begin{equation}
\label{eq_fixed_point}
	\bigl( \mathbf{I} - \mathbf{\Sigma} \bigr) \mathbf{v^*} = \mathds{1} \otimes v_0.
\end{equation}
It turns out that this equation is equivalent to an implicit collocation formulation, see also \cite{huang_accelerating_2006}.
Indeed, it holds that $\mathbf v^*=\big(\Phi_{\mathrm{Picard}} (\mathbf v^*)(t_j)\big)_{1\leq j\leq J}$, where $\Phi_{\mathrm{Picard}} (\mathbf v^*)$ is a polynomial of degree $J$ in time satisfying
\begin{equation}
\label{collocation}
\partial_t \Phi_{\mathrm{Picard}} (\mathbf v^*)(t_m)= \sum_{j=1}^{J} \mathcal F_{t_j}v_j^* \ell_j(t_m)=\mathcal F_{t_m}v_m^*=\mathcal F_{t_m}\big(\Phi_{\mathrm{Picard}} (\mathbf v^*)(t_m)\big),
\end{equation}
that is, $\Phi_{\mathrm{Picard}} (\mathbf v^*)$ is a solution to \eqref{eqmathcalF} at times $t_m$, for all $1\leq m \leq J$. As a consequence, it is also the solution to an implicit Runge-Kutta method with $J$ stages, whose Butcher tableau is given by
 
\begin{equation}
\label{butcher}
	\renewcommand\arraystretch{1.2}
	\begin{array}{c|c}
	\mathbf{c} & \mathbf{A}\\
	\hline
	& \mathbf{b}^T
\end{array}
\qquad\text{where}\quad
\mathbf{A}_{m,j}
=\frac{\omega_{j,m}}{h},  \quad \mathbf{b}_j
= \frac{\omega_j}{h}  \quad \text{and} \quad c_m=\frac{t_m-t_0}{h},
\end{equation}
see \cite[Theorem 7.7]{hairer_solving_1993}.
Such methods are of particular interest, since they can reach, depending on the underlying quadrature, up to double the order of convergence of the number of interpolating conditions or stages, that is, $2J$. The upper bound can only be reached using Gaussian quadrature formulas, in which case the resulting Runge-Kutta methods are denoted as \textit{Gauss methods}.

As for any Runge-Kutta scheme, we can analyse the linear stability by considering the scalar test problem
\begin{equation}\label{eq:exptestproblem}
\dot y(t)= \lambda y(t), \qquad \lambda \in \mathbb C,
\end{equation}
 for which the appoximation generated by the scheme after one time step $h$ is given by $R(h\lambda)y(0)$ for some rational function $R$.
 It can be shown that the Gauss collocation method defined by \eqref{butcher} is
\textit{A-stable}, that is, the stability function $R$ satisfies
 \begin{equation*}
 	|R(z)| \leq 1 \quad \text{for all} \quad z \in \mathbb C \quad \text{with} \quad \text{Re} (z) \leq 0.
 \end{equation*}
It is in addition
\textit{isometry-preserving} in the sense that
 \begin{equation*}
 	|R(z)| = 1 \quad \text{for} \quad \text{Re}(z) = 0.
 \end{equation*}
As the Schrödinger equation preserves the $L^2(\Omega)$ norm, the eigenvalues of its evolution operator are purely imaginary, and the above property translates into preservation of norm for the Gauss method. This will be addressed further in Lemma~\ref{lm:isometry}.
Altogether, these properties make the Gauss-Legendre nodes an adequate choice in our setting.

 \begin{remark}[Original formulation of SDC]
	In the original paper \cite[Rem. 4.1]{DGR00} the transition to the subinterval boundary is carried out by evaluation of the interpolation polynomial, that is, $\sum_{m=1}^J v^k_m\ell_m(t_0+h)$. However, as  observed in several numerical experiments, the resulting method is not isometry-preserving, which is particularly important when computing approximations to the time-dependent Schrödinger equation. Moreover, the resulting scheme is then also no longer equivalent to a Gauss method and loses the superconvergence property at the subinterval boundaries.
\end{remark}

 \begin{remark}[Advantages of using SDC over Picard iteration]
	It was shown in \cite{CS19} that both Picard iteration and SDC (using the explicit Euler as the low-order integrator) pick up at least one order of accuracy in each iteration. However, SDC is expected to converge even faster in the neighborhood of the fixed point, allowing one to reduce the number of iterations.
	Additionally, \cite{CS19} suggest that using higher-order base solvers can indeed lead to the SDC iteration being able to pick up more than one order of accuracy with each iteration. Moreover, the used solver also has a direct influence on the resulting stability properties of the scheme.
	It was also proven in \cite{HZ07} that if the fixed-point method is iterated until convergence, the maximal order of convergence $2J$ for Gauss-Legendre nodes, can in fact be reached, see also Lemma~\ref{lm:locerr2J}. In that perspective, SDC can also be viewed as a preconditioner for the Picard iteration, as can be deduced from the matrix formulation \eqref{eq:matrix_sdc}.
\end{remark}
 
 \subsection{Convergence analysis of the fixed-point iterations}
 
We are now in the position to show the contractivity of the constructed fixed-point iterations,
for small enough step sizes~$h$.
 For $\mathbf v\in L^2(\Omega)^J$, define the norm
 \[
 \|\mathbf v\|_J=\max_{1\leq j\leq J}\|v_j\|,
 \]
 where we recall that $\|\cdot\|=\|\cdot\|_{L^2(\Omega)}$.
 \begin{prop}[Contraction constant for the Schrödinger equation]
 \label{prop_contractivity}
 	Let
 	\begin{equation}
	\label{Lambda_J}
 		\Lambda_J = \frac{1}{h} \int_{0}^{h} \sum_{m=1}^J|\ell_m(s)| \, ds \quad\text{and}\quad \rho = h \Lambda_J C_V.
 	\end{equation}
 	Then the fixed-point iterations $\mathbf \Phi_{\mathrm{Picard}}$ and $\mathbf \Phi_{\mathrm{SDC}}$ defined by \eqref{twisted_fixed_point_iter} and \eqref{eq:SDC} satisfy
 	\begin{equation*}
	\|\mathbf \Phi_{\mathrm{Picard}}\mathbf v-\mathbf \Phi_{\mathrm{Picard}}\mathbf w\|_{\mathcal W(0,h)}\leq \rho \|\mathbf v-\mathbf w\|_J
	\end{equation*}
	and
	\begin{equation*}
	\|\mathbf \Phi_{\mathrm{SDC}}\mathbf v-\mathbf \Phi_{\mathrm{SDC}}\mathbf w\|_J\leq 2\rho e^\rho \|\mathbf v-\mathbf w\|_J
 	\end{equation*}
	for any $\mathbf v,\mathbf w\in L^2(\Omega)^J$.
 \end{prop}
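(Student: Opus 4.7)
The plan is to treat the two iterations separately. In both cases the essential ingredient is that for every $s\in\R$ the operator $\mathcal F_s = -i e^{-is\Delta}V e^{is\Delta}$ has $L^2(\Omega)$-operator norm bounded by $C_V$, since $e^{\pm is\Delta}$ are unitary on $L^2(\Omega)$ and $V$ is bounded according to \eqref{C_V}.

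For the Picard bound, I would subtract the defining formula \eqref{eq:extension} applied to $\mathbf v$ and $\mathbf w$; the $v(t_0)$ contribution cancels, leaving $\Phi_{\mathrm{Picard}}(\mathbf v)(t) - \Phi_{\mathrm{Picard}}(\mathbf w)(t) = \int_{t_0}^t \sum_{j=1}^J \mathcal F_{t_j}(v_j-w_j)\,\ell_j(s)\,ds$. Taking the $L^2(\Omega)$ norm inside the time integral by the triangle inequality and using the uniform bound $\|\mathcal F_{t_j}(v_j-w_j)\| \le C_V\,\|\mathbf v-\mathbf w\|_J$ gives
\[
\|\Phi_{\mathrm{Picard}}(\mathbf v)(t) - \Phi_{\mathrm{Picard}}(\mathbf w)(t)\| \le C_V \|\mathbf v - \mathbf w\|_J \int_{t_0}^{t_0+h}\sum_{j=1}^J |\ell_j(s)|\,ds
\]
for every $t\in[t_0,t_0+h]$, and the supremum over $t$ together with the definition of $\Lambda_J$ produces the factor $\rho = h\Lambda_J C_V$.

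For SDC, I would introduce $E_j := \mathbf \Phi_{\mathrm{SDC}}(\mathbf v)_j - \mathbf \Phi_{\mathrm{SDC}}(\mathbf w)_j$, with $E_0 = 0$ since the first component of either iterate equals $v_0$ and does not depend on the argument. Subtracting the two instances of the recursion \eqref{eq:SDC} and noting that the chosen $\Psi_{t_{j-1},t_j} = (t_j-t_{j-1})\mathcal F_{(t_{j-1}+t_j)/2}$ is linear yields, for $\epsilon_j := \|E_j\|$, $\alpha_j := C_V\sum_{m=1}^J|\tilde\omega_{j,m}|$ and $\beta_j := C_V(t_j-t_{j-1})$, the one-step estimate
\[
\epsilon_j \le (1+\beta_j)\epsilon_{j-1} + (\alpha_j + \beta_j)\,\|\mathbf v - \mathbf w\|_J .
\]
Iterating this discrete Gronwall-type recurrence from $\epsilon_0 = 0$ gives $\epsilon_j \le \bigl(\prod_{i=1}^J (1+\beta_i)\bigr)\bigl(\sum_{i=1}^J(\alpha_i+\beta_i)\bigr)\|\mathbf v-\mathbf w\|_J$. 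Since $\sum_{m=1}^J\ell_m\equiv 1$, we have $\Lambda_J\ge 1$, hence $C_V h\le\rho$, from which $\sum_j\beta_j\le\rho$ follows directly and $\sum_j\alpha_j\le\rho$ follows from Fubini on the partition $\{[t_{j-1},t_j]\}$ applied to $|\tilde\omega_{j,m}|\le\int_{t_{j-1}}^{t_j}|\ell_m(s)|\,ds$. Bounding the product by $e^{\sum\beta_i}\le e^\rho$ and the sum by $2\rho$ then yields the claimed factor $2\rho e^\rho$.

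The main obstacle is really only in the SDC case, where the combined additive-multiplicative structure of the recursion (the additive $\alpha_j+\beta_j$ term from the new updates, the multiplicative $(1+\beta_j)$ term from propagating the error through the low-order integrator) forces the use of a discrete Gronwall argument; unifying the quadrature-weight contribution and the Euler-type correction under the single scale $\rho$ is what demands the inequality $\Lambda_J\ge 1$. The Picard bound, by contrast, is a one-line Bochner estimate once the uniform norm bound on $\mathcal F_s$ is in place.
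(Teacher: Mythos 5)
Your proof is correct and follows essentially the same route as the paper's: the same Bochner estimate for the Picard map, and for SDC the same one-step recursion $\epsilon_j \le (1+\beta_j)\epsilon_{j-1} + (\alpha_j+\beta_j)\|\mathbf v-\mathbf w\|_J$ resolved by a discrete Gronwall induction, with $\Lambda_J\geq 1$ used to absorb the explicit-integrator contributions $\sum_j\beta_j\leq hC_V$ under the single bound $\rho$. The only cosmetic difference is that you package the product $\prod(1+\beta_i)\leq e^{\sum\beta_i}\leq e^\rho$ at the end, whereas the paper applies $1+c\leq e^c$ inside the induction to reach $e^{hC_V}$ and then upgrades $hC_V\leq\rho$; the resulting bound $2\rho e^\rho$ is identical.
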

 
 \begin{proof}
 For the Picard iteration, we see that for any time $t\in [0,h]$, the expression
\[
\| \Phi_{\mathrm{Picard}}({\bf v})(t)-\Phi_{\mathrm{Picard}}({\bf w})(t) \|
= \biggnorm{ \int_0^{t} \sum_{m=1}^J \ell_m(s) \mathcal F_{t_m}(v_m-w_m)\,ds }
\]
is bounded by
\[
\int_0^{t}  \sum_{m=1}^J |\ell_m(s)|\,ds \ C_V \|v_m-w_m\| \leq \rho \, \|{\bf v}-{\bf w}\|.
\]
 For the SDC iteration, letting $\phi_j=(\mathbf \Phi_{\mathrm{SDC}}\mathbf v)_j-(\mathbf \Phi_{\mathrm{SDC}}\mathbf w)_j$, we have
 	\begin{align*}
 		\| \phi_j \| &\leq \| \phi_{j-1} \|
 		 + \bigl\| \Psi_{t_{j-1},t_j} \bigl( \phi_{j-1} - v_{j-1} + w_{j-1} \bigr) \bigr\| 
		 + \sum_{m=1}^{J} | \tilde{\omega}_{j,m}|\, \bigl\| \mathcal{F}_{t_m} \bigl( v_m - w_m \bigr) \bigr\|\\
 		&\leq  (1+\tau_j C_V ) \bigl\| \phi_{j-1} \bigr\| + \tau_j C_V \bigl\| v_{j-1} - w_{j-1} \bigr\|
		+\sum_{m=1}^J |\tilde{\omega}_{j,m}| \, C_V \bigl\| v_m - w_m \bigr\|.
 	\end{align*}
 	By induction, using the upper bounds $1+c\leq e^c$ for all $c \in \R$ and $\sum_{j=1}^J\tau_j\leq h$, we obtain
 	\begin{equation*}
 		\max_{1\leq j\leq J} \bigl\| \phi_{j} \bigr\|
 		\leq e^{h C_V}\Bigl(\sum_{j,m=1}^J |\tilde{\omega}_{j,m}|+h \Bigr) C_V \bigl\|\mathbf v -\mathbf w\bigr\|_J,
 	\end{equation*}
 	and we conclude by observing that
 	\begin{equation*}
 		\sum_{j,m=1}^J |\tilde{\omega}_{j,m}|=\sum_{j=1}^J\int_{t_{j-1}}^{t_{j}}\sum_{m=1}^J|\ell_m (s)| \, ds \leq h\Lambda_J
		\quad\text{and}\quad
		1= \frac{1}{h} \int_{0}^{h} \sum_{m=1}^J\ell_m(s) \, ds\leq \Lambda_J.\qedhere
 	\end{equation*}
 \end{proof}

 \begin{remark}[Choice of quadrature nodes] \label{rem:avglebesgue}
 	The constant $\Lambda_J$ defined in \eqref{Lambda_J} can be bounded from above by the Lebesgue constant. The asymptotic behavior of the  latter is in the order $O (\sqrt{J})$ in the case of the Gauss-Legendre and Radau-Legendre quadrature points, see \cite{hager_lebesgue_2016}. Since this constant is a limiting factor for the step size $h$, one could also think of using Chebyshev nodes, whose asymptotic behavior is in $\mathcal O (\log J)$. Unfortunately, using Chebyshev nodes does not result in a scheme of order $2J$ at the subinterval boundaries. However, the averaged constant $\Lambda_J$ differs much less between the Gauss-Legendre and Chebyshev points, and can in both cases be bounded by 2 for $J \leq 10$, which is sufficient for our purposes.
 	
 	Moreover, the numerical experiments performed in \cite{aristoff_orbit_2014} for the particular application of celestial mechanics suggest that the usage of an iteration based on Gauss-Legendre nodes leads to improved convergence properties over the Chebyshev nodes.
 \end{remark}

\section{Thresholding strategy and resulting algorithms}\label{sec:thresholding}

We now turn to our strategy for reducing the ranks of the intermediate approximations. Since we use a fixed-point iteration in order to successively approximate the true solution, it seems natural to combine the iterations with a rank control strategy. 

\subsection{Rank reduction}

For every $v \in L^2(\Omega_1 \times \Omega_2)$, there exists a singular value decomposition 
\begin{equation}\label{eq:svd}
  v = \sum_{k=1}^\infty \sigma_k v^{(1)}_k \otimes v^{(2)}_k,
\end{equation}
with the singular values $\sigma_1 \geq \sigma_2 \geq \ldots\geq 0$ and orthonormal families of singular vectors $ ( v^{(1)}_k )_{k \in \N}$ in $L^2(\Omega_1)$ and $( v^{(2)}_k)_{k \in \N}$ in $L^2(\Omega_2)$. We have already noted the well-known fact that the truncation of the singular value decomposition to rank $r$ yields the best approximation of rank $r$ in $L^2(\Omega_1\times \Omega_2)$, with
\[
  \biggnorm{  v -  \sum_{k=1}^r \sigma_k v^{(1)}_k \otimes v^{(2)}_k }^2 = \sum_{k > r} \sigma_k^2.
\]

We define the nonlinear \textit{hard thresholding} operator on $L^2(\Omega_1 \times \Omega_2)$ by
\begin{equation}\label{eq:hardthresh}
	\mathcal{H}_\alpha(v) =  \sum_{\substack{k \in \N \\ \sigma_k > \alpha}}  \sigma_k\, v^{(1)}_k \otimes v^{(2)}_k,
\end{equation}
and the corresponding \textit{recompression} operator by
\begin{equation} \label{eq:recompression}
	\mathcal{R}_\delta(v) =  \sum_{1 \leq k \leq r_{\delta}}  \sigma_k\, v^{(1)}_k \otimes v^{(2)}_k,
	\quad \text{where} \quad r_\delta = \min \Bigl\{ r \in \N_0 \colon  \sum_{k > r} \sigma_k^2 \leq \delta^2 \Bigr\}.
\end{equation}

In general, the addition of two low-rank tensor representations leads to an increase in the rank parameter: the sum of two elements in $L^2(\Omega_1 \times \Omega_2)$ with respective ranks $r_1$ and $r_2$
has rank up to $r_1 + r_2$.
Even though the used low-rank format is closed under addition, the resulting representation may be redundant. This redundancy can be removed in practice by applying a recompression operator \eqref{eq:recompression} with a sufficiently small tolerance $\delta$ to the sum.

\begin{figure}[t] 
	\centering
	\begin{tikzpicture}[scale=0.7, font=\small]
		
		\draw[->] (0,0) -- (11,0) node[pos=1.05] {$i$};
		\draw[->] (0,0) -- (0,5) node[anchor=south, pos=1] {$\sigma_i$};
		
		\draw[red, dashed] (0,1) -- (11,1);
		
		\draw[red] (0,1) -- (0.2,1); 
		\node[left, red] at (0,1) {$\alpha$};
		
		\foreach \i in {1,...,10} {
		
			\pgfmathsetmacro{\val}{5*exp(-0.3*(\i-1))}
			
			\fill (\i,\val) circle (2pt);
			
			\pgfmathsetmacro{\tempDiff}{\val - 1}
			\pgfmathsetmacro{\shiftVal}{max(\tempDiff, 0)}
			\fill[red] (\i,\shiftVal) circle (2pt);
			
			\ifdim \tempDiff pt>0pt
			\draw[->, red] (\i,\val) -- (\i,\shiftVal);
			\fi
			
			\begin{scope} [xshift=220, yshift=100]
				
				\draw[white, fill=white, opacity=0.8] (0,0) rectangle (3,1.6);
				\draw (0,0) rectangle (3,1.6);

				\fill (0.3,1.2) circle (2pt);
				\node[right] at (0.5,1.2) {$\sigma_{i}(v)$};
				
				\fill[red] (0.3,0.4) circle (2pt);
				\node[right] at (0.5,0.4) {$\sigma_{i}\big(\mathcal S_\alpha (v)\big)$};
			\end{scope}
		}
	\end{tikzpicture}
	\caption{Illustration of the soft thresholding operation applied to some element $v \in L^2(\Omega_1 \times \Omega_2)$.}
	\label{fig:st}
\end{figure}
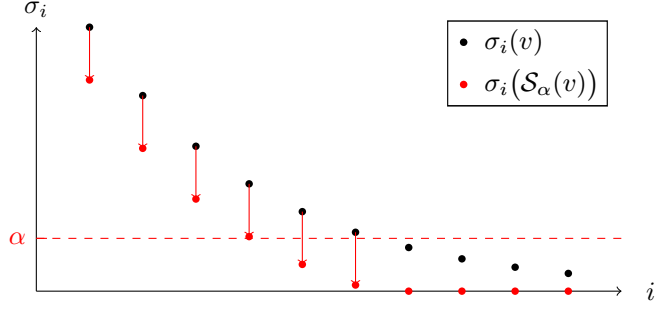

Another possibility to reduce the rank of an element $v \in L^2(\Omega_1 \times \Omega_2)$ would be to apply the \textit{soft thresholding} operator defined by
\begin{equation}\label{eq:softthresh}
    \mathcal{S}_\alpha(v) =  \sum_{k \in \N} \max\{  \sigma_k - \alpha, 0 \}\, v^{(1)}_k \otimes v^{(2)}_k.
\end{equation}
A visualization of the application of the operator to a given sequence of singular values can be found in Figure \ref{fig:st}. The errors resulting from the application of the truncation operators are related by
\begin{equation*}
	\| v - \mathcal S_\alpha(v) \|^2 = \| v - \mathcal H_\alpha(v) \|^2 + \alpha^2 \rank(\mathcal H_\alpha(v)). 
\end{equation*}
Note that $\mathcal S_\alpha (v)$ and $\mathcal H_\alpha(v)$ have the same rank $r=|\{k\in\N\colon\sigma_k>\alpha\}|$.

The following lemma allows to relate the errors caused by soft thresholding an element in $L^2(\Omega_1 \times \Omega_2)$ with different thresholds.

\begin{lemma} \label{lemma:thresholds}
	For $0 < \alpha \leq \beta$ it holds that
	\begin{equation*}
		\| v - \mathcal S_\alpha(v) \| \leq \| v - \mathcal S_\beta (v) \| \leq \frac{\beta}{\alpha} \| v - \mathcal S_\alpha(v) \|, 
	\end{equation*}
	for all $v \in L^2(\Omega_1 \times \Omega_2)$.
\end{lemma}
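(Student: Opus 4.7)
The plan is to reduce everything to a pointwise inequality on the singular values. Starting from the singular value decomposition \eqref{eq:svd} of $v$ and the definition \eqref{eq:softthresh} of $\mathcal{S}_\alpha$, I would first observe that
\[
  v - \mathcal{S}_\alpha(v) = \sum_{k \in \N} \bigl( \sigma_k - \max\{\sigma_k - \alpha, 0\} \bigr) v^{(1)}_k \otimes v^{(2)}_k = \sum_{k \in \N} \min\{\sigma_k, \alpha\}\, v^{(1)}_k \otimes v^{(2)}_k,
\]
so by orthonormality of the singular vector families,
\[
  \| v - \mathcal{S}_\alpha(v) \|^2 = \sum_{k \in \N} \min\{\sigma_k, \alpha\}^2,
\]
and analogously for $\beta$ in place of $\alpha$.

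The first inequality then follows immediately from the monotonicity $\min\{\sigma_k, \alpha\} \leq \min\{\sigma_k, \beta\}$, valid for each $k$ since $\alpha \leq \beta$, and summing squares.

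For the second inequality, I would establish the pointwise bound
\[
  \min\{\sigma_k, \beta\} \leq \frac{\beta}{\alpha} \min\{\sigma_k, \alpha\}
\]
by distinguishing three cases according to the position of $\sigma_k$ relative to $\alpha$ and $\beta$: if $\sigma_k \leq \alpha$, both minima equal $\sigma_k$ and the factor $\beta/\alpha \geq 1$ suffices; if $\alpha < \sigma_k \leq \beta$, the left side equals $\sigma_k \leq \beta = (\beta/\alpha)\alpha$; if $\sigma_k > \beta$, both sides equal $\beta$ after multiplication. Squaring and summing in $k$ then yields the claimed bound.

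There is no real obstacle here; the only point to be careful about is that the bound on $L^2$ norms must follow from the pointwise bound on singular values, which is legitimate because the operators $\mathcal{S}_\alpha$ and $\mathcal{S}_\beta$ act on the \emph{same} singular vectors of $v$, so that both differences $v - \mathcal{S}_\alpha(v)$ and $v - \mathcal{S}_\beta(v)$ are expressed in the common orthonormal basis $(v^{(1)}_k \otimes v^{(2)}_k)_{k \in \N}$.
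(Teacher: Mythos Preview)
Your proof is correct and follows essentially the same approach as the paper: both reduce to the identity $\| v - \mathcal{S}_\alpha(v) \|^2 = \sum_{k} \min\{\sigma_k,\alpha\}^2$ and then invoke the pointwise inequalities $\min\{\alpha,\sigma_k\} \leq \min\{\beta,\sigma_k\} \leq \frac{\beta}{\alpha}\min\{\alpha,\sigma_k\}$. You simply spell out the case analysis and the justification for passing from pointwise to norm bounds in more detail than the paper does.
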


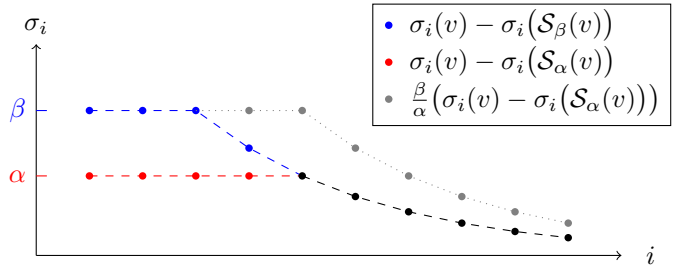
\begin{figure}[h] 
	\centering
	\begin{tikzpicture}[scale=0.7, font=\small]
		\draw[->] (0,0) -- (11,0) node [pos=1.05] {$i$};
		\draw[->] (0,0) -- (0,4) node[anchor=south] {$\sigma_i$};
		
		\pgfmathsetmacro{\valThird}{5*exp(-0.3*(3-1))}
		\pgfmathsetmacro{\valFourth}{5*exp(-0.3*(4-1))}
		\pgfmathsetmacro{\valFifth}{5*exp(-0.3*(5-1))}
		\pgfmathsetmacro{\vSix}{5*exp(-0.3*(6-1))}
		\pgfmathsetmacro{\vSeven}{5*exp(-0.3*(7-1))}
		\pgfmathsetmacro{\vEight}{5*exp(-0.3*(8-1))}
		\pgfmathsetmacro{\vNine}{5*exp(-0.3*(9-1))}
		\pgfmathsetmacro{\vTen}{5*exp(-0.3*(10-1))}
		
		\foreach \i in {1,2,3} {
			\fill[blue] (\i,\valThird) circle (2pt);
		}
		\foreach \i in {1,2,3,4} {
			\fill[red] (\i,\valFifth) circle (2pt);
		}
		
		\fill[blue] (4,\valFourth) circle (2pt);
		
		\fill[gray] (4,\valThird) circle (2pt);
		\fill[gray] (5,\valThird) circle (2pt);
		\fill[gray] (6,\vSix * \valThird / \valFifth) circle (2pt);
		\fill[gray] (7,\vSeven * \valThird / \valFifth) circle (2pt);
		\fill[gray] (8,\vEight * \valThird / \valFifth) circle (2pt);
		\fill[gray] (9,\vNine * \valThird / \valFifth) circle (2pt);
		\fill[gray] (10,\vTen * \valThird / \valFifth) circle (2pt);
		
		\draw[gray, dotted]
		(3, \valThird) -- (4, \valThird) -- (5,\valFifth * \valThird / \valFifth) -- (6,\vSix * \valThird / \valFifth) -- (7,\vSeven * \valThird / \valFifth) -- (8,\vEight * \valThird / \valFifth) -- (9,\vNine * \valThird / \valFifth) -- (10,\vTen * \valThird / \valFifth);
		
		\draw[red,dashed]
		(1,\valFifth) -- (2,\valFifth) -- (3,\valFifth) -- (4,\valFifth) -- (5,\valFifth);
		
		\draw[dashed]
		(5,\valFifth) -- (6,\vSix) -- (7,\vSeven) -- (8,\vEight) -- (9,\vNine) -- (10,\vTen);
		
		\draw[blue,dashed]
		(1,\valThird) -- (2,\valThird) -- (3,\valThird) -- (4,\valFourth) -- (5,\valFifth);
		
		\foreach \i in {5,6,7,8,9,10} {
			\pgfmathsetmacro{\val}{5*exp(-0.3*(\i-1))}
			\fill (\i,\val) circle (2pt);
		}
		
		\draw[red] (0,\valFifth) -- (0.2,\valFifth); 
		\node[left, red] at (0,\valFifth) {$\alpha$};
		
		\draw[blue] (0,\valThird) -- (0.2,\valThird); 
		\node[left, blue] at (0,\valThird) {$\beta$};
		
		\begin{scope} [xshift=180, yshift=70]
			
			\draw[white, fill=white, opacity=0.8] (0,0) rectangle (5.6,2.3);
			\draw (0,0) rectangle (5.6,2.3);

			\fill[blue] (0.3,1.9) circle (2pt);
			\node[right] at (0.5,1.9) {$\sigma_{i}(v) - \sigma_{i}\big(\mathcal S_\beta (v)\big)$};

			\fill[red] (0.3,1.2) circle (2pt);
			\node[right] at (0.5,1.2) {$\sigma_{i}(v) - \sigma_{i}\big(\mathcal S_\alpha (v)\big)$};
			
			\fill[gray] (0.3,0.5) circle (2pt);
			\node[right] at (0.5,0.5) {$\frac{\beta}{\alpha} \big(\sigma_{i}(v) - \sigma_{i}\big(\mathcal S_\alpha (v)\big)\big)$};
		\end{scope}
	
	\end{tikzpicture}
	\caption{Visualization of Lemma \ref{lemma:thresholds}: deviations between the singular values of some element $v \in L^2(\Omega_1 \times \Omega_2)$ and the output of a soft thresholding operator applied to $v$. The black dots represent coinciding values of the red and blue dots.}
\end{figure}

\begin{proof}
For $v$ of the form \eqref{eq:svd}, we have
\[
\| v - \mathcal S_\alpha(v) \|^2=\sum_{k \in \N} \min\{\alpha,\sigma_k\}^2,
\]
hence the result follows from the inequalities $\min\{\alpha,\sigma_k\} \leq \min\{\beta,\sigma_k\}\leq \frac{\beta}{\alpha}\min\{\alpha,\sigma_k\}$.
\end{proof}

\subsection{Iterative schemes}

An additional feature of the soft thresholding operator is its \textit{non-expansiveness} property, that is, for all $\alpha \geq 0$ and $v,w \in L^2(\Omega)$, it holds that
\begin{equation*}
	\| \mathcal S_\alpha(v) - \mathcal S_\alpha(w) \| \leq \| v - w \| \quad\text{and}\quad \| (\mathcal I-\mathcal S_\alpha)(v) - (\mathcal I-\mathcal S_\alpha)(w) \| \leq \| v - w \|,
\end{equation*}
where $\mathcal I$ denotes the identity operator.
We refer to \cite{BS17} and \cite{Bachmayr23} for a proof. This feature can now be exploited for the rank control within our iterative methods: using the previously introduced notation, we set for the Gauss-Legendre-Picard iteration
\begin{equation}
	\mathbf v^{k+1} = \boldsymbol{\mathcal S}_{\alpha_k} \bigl( \mathbf{\Phi}_{\mathrm{Picard}} (\mathbf v^k) \bigr) = \Big( \mathcal S_{\alpha_k} \big( \Phi \mathbf v^k(t_j) \big) \Big)_{1 \leq j \leq J}, \label{eq:picard_thresh}
\end{equation}
that is, we apply the soft thresholding operator with threshold $\alpha_k$ component-wise to each of the updated approximations at the Gauss-Legendre nodes.
Soft thresholding the fixed-point iterations as in \eqref{eq:picard_thresh}
does not interfere with the contractivity property, that is,
\begin{equation*}
	\bigl\| \mathbf v^{k+1} - \mathbf w^{k+1} \bigr\| = \bigl\| \boldsymbol{\mathcal S}_{\alpha_k} \big( \mathbf{\Phi}_{\mathrm{Picard}} (\mathbf v^k) \big) - \boldsymbol{\mathcal S}_{\alpha_k} \big( \mathbf{\Phi}_{\mathrm{Picard}} (\mathbf w^k) \big) \bigr\| \leq \rho \bigl\| \mathbf v^k - \mathbf w^k \bigr\|,
\end{equation*}
for all $\mathbf v^k, \mathbf w^k \in L^2(\Omega)^J$, where $\rho$ is contraction factor from \eqref{Lambda_J}.

For the SDC iteration, we instead apply the soft thresholding operator after updating each of the Gauss-Legendre nodes. Then \eqref{eq:SDC} modifies according to
\begin{equation}
\label{eq_SDC_phi}
	v^{k+1}_j = \mathcal S_{\alpha_k} (\phi_j^{k}),
	\qquad \phi_j^{k} = v^{k+1}_{j-1} 
	+ \sum_{m=1}^{J} \tilde{\omega}_{j,m} \mathcal{F}_{t_m} v^k_m
	+ \Psi_{t_{j-1}, t_j} \big( v^{k+1}_{j-1} - v^k_{j-1} \big).
\end{equation}
We observe in the numerical experiments that
applying $\mathcal S_{\alpha_k}$ after each of the $J$ updates
results in much smaller ranks of the intermediate approximations
than thresholding only at the end of the iteration, as we would do with $\boldsymbol{\mathcal S}_{\alpha_k} \big( \mathbf{\Phi}_{\mathrm{SDC}} (\mathbf w^k) \big)$.
However this makes the analysis more intricate, which is also the reason why we will focus on the Picard iteration in the following.

\begin{figure}[t]
	\centering
	\begin{minipage}{0.5\textwidth}
		\centering
		\begin{tikzpicture}[scale=1]
			
			\node[label=left:{$v$}] at (-5,0) {$\bullet$};
			\node[label=below:{$\Phi (v)$}] at (-3,-1) {$\bullet$};
			\node[red,label=above:{$\mathcal S_\alpha \big(\Phi (v)\big)\;\;\;\phantom.$}] at (-2,0) {$\bullet$};
			\node[label=right:{$0$}] at (0,2) {$\bullet$};
			\node[red,label=right:{$v_\alpha=\mathcal S_\alpha \big( \Phi (v_\alpha) \big)$}] at (0,0) {$\bullet$};
			\node[label=right:{$\Phi (v_\alpha)$}] at (0,-1) {$\bullet$};
			\node[label=right:{$v^*$}] at (0,-2.5) {$\bullet$};
			
			\draw[thick,->] (-5,0) 
			to[bend left=15] (-3.06,-0.97);  
			\draw[red, thick,->] (-3,-1) 
			to[bend right=15] (-2.04,-0.04); 
			\draw[red, thick,<-] (0,-0.05) 
			to[bend left=20] (0,-0.94);      
			\draw[thick,->] (0,-0.05) 
			to[bend right=20] (0,-0.94);      
			
			\draw[dotted] (-3,-1) 
			to[bend left=0] (0,-2.5);
			\draw[dotted] (-5,0) 
			to[bend right=0] (0,0);
			\draw[dotted] (-2,0) 
			to[bend left=0] (0,2);
			\draw[dotted] (-3,-1) 
			to[bend left=0] (-2,0);
			\draw[dotted] (-3,-1) 
			to[bend left=0] (-5,0);
			
			\draw[dotted] (0,0) 
			to[bend left=0] (0,2);
			\draw[dotted] (0,-2.5) 
			to[bend left=0] (0,2);
			\draw[dotted] (0,-1) 
			to[bend right=0] (0,-2.5);
			
		\end{tikzpicture}
	\end{minipage}%
	\begin{minipage}{0.5\textwidth}
		\centering
		\begin{tikzpicture}[scale=1]
			
			\node[blue, label=right:{$v_{\alpha_0}$}] at (0,0) {$\bullet$};
			\node[blue, label=right:{$v_{\alpha_1}=v_{\alpha_2}$}] at (0,-2) {$\bullet$};
			\node[blue, label=right:{$v_{\alpha_3}=v_{\alpha_4}$}] at (0,-3) {$\bullet$};
			\node[blue] at (0,-3.5) {$\bullet$};
			\node at (0.4, -3.4) {$\vdots$};
			\node[blue] at (0,-3.75) {$\bullet$};
			\node[blue] at (0,-3.875) {$\bullet$};
			\node[blue] at (0,-3.9375) {$\bullet$};
			\node[label=right:{$v^*$}] at (0,-4) {$\bullet$};
			\node[label=left:{$v^0$}] at (-4.5,0) {$\bullet$};
			\node[label=above:{$v^1=\mathcal S_{\alpha_0} \big( \Phi (v^0) \big)$}] at (-3,0) {$\bullet$};
			\node[label=left:{$v^2$}] at (-2,-2/3) {$\bullet$};
			\node[label=left:{$v^3$}] at (-4/3,-10/9) {$\bullet$};
			\node[label=left:{$v^4$}] at (-8/9,-3+34/27) {$\bullet$};
			\node[label=left:{$v^5$}] at (-16/27,-3+68/81) {$\bullet$};
			\node at (-32/81,-2.62) {$\bullet$};

			\draw[thick,->] (-4.5,0) to[bend right=15] (-3.05,0);
			\draw[thick,->] (-3,0) to[bend left=15] (-2.04,-2/3+0.02);
			\draw[thick,->] (-2,-2/3) to[bend left=15] (-4/3-0.04,-10/9+0.02);
			\draw[thick,->] (-4/3,-10/9) to[bend right=15] (-8/9-0.02,-3+34/27+0.04);
			\draw[thick,->] (-8/9,-3+34/27) to[bend right=15] (-16/27-0.02,-3+68/81+0.04);
			\draw[thick,->] (-16/27,-3+68/81) to[bend left=15] (-32/81-0.015,-2.62+0.05);
			
			\draw[thick,dashed,blue,->](0,0) to[bend left=15] (0,-2);
			\draw[thick,dashed,blue,->](0,-2) to[bend left=15] (0,-3);
			\draw[thick,dashed,blue,->](0,-3) to[bend left=15] (0,-3.5);
			
			\draw[dotted] (-4.5,0) to[bend left=0] (0,0);
			\draw[dotted] (-3,0) to[bend left=0] (0,-2);
			\draw[dotted] (-4/3,-10/9) to[bend left=0] (0,-3);
			
			\draw[dotted] (-16/27,-3+68/81) to[bend right=0] (0,-3.5);
		\end{tikzpicture}
	\end{minipage}
	\caption{Left: illustration of the fixed-point $v^*$ of $\Phi$, and of the modified version $v_\alpha$ due to the soft thresholding operator $\mathcal S_\alpha$. Right: convergence of the fixed-point iteration coupled with soft thresholding, $v^k$, when the parameter $\alpha$ is decreased every second iteration.}
	\label{fig:st_illustrations}
\end{figure}

Using the soft thresholding operator as in \eqref{eq:picard_thresh} with fixed $\alpha_k = \alpha$ at all iterations, results in a modified fixed-point limit $\mathbf v_{\alpha} \in L^2(\Omega)$ defined by $\mathbf v_\alpha = \boldsymbol{\mathcal S}_\alpha \big( \mathbf \Phi_{\mathrm{Picard}} (\mathbf v_\alpha) \big)$,
as illustrated on the left of Figure \ref{fig:st_illustrations}. In particular, it does not coincide with $\mathbf v^{\ast}$ from \eqref{eq_fixed_point}, which we are however interested in.
This issue can be circumvented by choosing the soft thresholds such that $\lim_{k \rightarrow \infty} \alpha_k = 0$. An adaptive strategy presented and analyzed in \cite{BS17} consists in setting
\begin{equation*}
	\alpha_{k+1} = \begin{cases}
		\theta \alpha_{k}, \quad &\text{if } \ \| \boldsymbol{\mathcal S}_{\alpha_k} \big(  \mathbf \Phi_{\mathrm{Picard}} (\mathbf v^k) \big) -  \mathbf v^k \|_J \leq c \|  \mathbf \Phi_{\mathrm{Picard}}(\mathbf v^k) - \mathbf v^k \|_J, \\
		\alpha_k, \quad &\text{otherwise},
	\end{cases}
\end{equation*}
for some $c, \theta \in (0,1)$.
This amounts to decrease $\alpha$ whenever $\mathbf v^k$ gets too close to $\mathbf v_\alpha$, compared to its distance with $\mathbf v^*$,
see also Figure \ref{fig:st_illustrations} on the right for an illustration. The 
initial threshold $\alpha_0$ is taken large enough to ensure that
$\boldsymbol{\mathcal S}_{\alpha_0} \big( \mathbf \Phi_{\mathrm{Picard}}(\mathbf v^0) \big) = \boldsymbol{\mathcal S}_{\alpha_0} \big( \mathbf \Phi_{\mathrm{Picard}}(\mathbf 0) \big) = \mathbf 0$.
 
In this paper, however, we show that it is also possible to use a simpler strategy,
consisting in multiplying $\alpha$ by a fixed factor $\theta \in (0,1)$ at each iteration.
Using the same initialization, the thresholds are then given by $\alpha_k=\theta^k\alpha_0$.
As we will see in the following, this procedure yields both a convergence result and rank bounds on the iterates of the resulting fixed-point iteration.
The pseudo-codes for the
Picard iteration with constant decrease of $\alpha$ and SDC with adaptive decrease of $\alpha$
can be found in Algorithms \ref{alg:Picard} and \ref{alg:SDC}.
The other options, namely Picard iteration with adaptive decrease of $\alpha$ and SDC with constant decrease of $\alpha$, are obtained in a similar way.

\begin{algorithm}
	\caption{Gauss-Legendre-Picard iteration with constant decrease of $\alpha$}\label{alg:Picard}
	\begin{algorithmic}[1]
		\For{$n = 1, \dots, N$} \Comment{Time stepping }
		\State Adjust the nodes $t_1,\dots,t_J$ to the current interval $[(n-1)h, n h]$
		\State Initialize $k \gets -1$,  $\alpha_0 \gets \sigma_{\max}(v_0)$, $\mathbf v^0 \gets \mathbf 0$, $\mathrm{res} \gets 1$
		\While{$\mathrm{res} > \varepsilon_n$} \Comment{Fixed-point iteration}
		\State Increment $k \gets k+1$
		\State Compute $\boldsymbol \phi^k=\mathbf \Phi_{\mathrm{Picard}}(\mathbf v^k)$ from \eqref{twisted_fixed_point_iter}
		\State Update $\mathrm{res} \gets \|\boldsymbol \phi^k-\mathbf v^k\|_J$ 
		\State Compute $\mathbf v^{k+1} \gets \boldsymbol{\mathcal S}_{\alpha_k}( \boldsymbol \phi^k)$	\Comment{Soft thresholding}
		\State Update $\alpha_{k+1} \gets \theta \alpha_k$	\Comment{Fixed decrease of $\alpha$}
		\EndWhile
		\State $v_0 \gets  v_0 + \sum_{j = 1}^J \omega_j \mathcal F_{t_j} v^k_j$ \Comment{Approximation at subinterval boundary}
		\State $v_0 \gets \mathcal{R}_{\delta_n} (v_0)$ \Comment{Recompression to a tolerance $\delta_n$}
		
		\EndFor
	\end{algorithmic}
\end{algorithm}

\begin{algorithm}
	\caption{SDC with adaptive decrease of $\alpha$}\label{alg:SDC}
	\begin{algorithmic}[1]
		\For{$n = 1, \dots, N$} \Comment{Time stepping}
		
		\State Adjust the nodes $t_1,\dots,t_J$ to the current interval $[(n-1)h, nh]$
		\State Initialize $k \gets -1$,  $\alpha_0 \gets \sigma_{\max}(v_0)$, $\mathbf v^0 \gets \mathbf 0$, $\mathrm{res} \gets 1$
		\While{$\mathrm{res} > \varepsilon_n$} \Comment{Fixed point iteration}
		\State Increment $k \gets k+1$
		
		\For{$j = 1,\dots,J$}
		\State Compute $\phi_j^{k}$ from \eqref{eq_SDC_phi}
		\State Threshold $v^{k+1}_j \gets \mathcal S_{\alpha_k}( \phi^{k}_j )$ \Comment{Soft thresholding}
		\EndFor
		\State Update $\mathrm{res} \gets \|\boldsymbol \phi^{k}-\mathbf v^k\|_J$ 
		\State Compute $\mathrm{err} \gets \| \mathbf v^{k+1} - \mathbf v^k \|_J$
		
		\If{$\mathrm{err} \leq c \ \mathrm{res} $}	\Comment{Adaptive decrease of $\alpha$}
		\State $\alpha_{k+1} \gets \theta \alpha_k$
		\Else
		\State $\alpha_{k+1} \gets \alpha_k$
		\EndIf
		\EndWhile
		
		\State $v_0 \gets v_0 + \sum_{j = 1}^J \omega_j \mathcal F_{t_j} v^k_j$ \Comment{Approximation at subinterval boundary}
		\State $v_0 \gets \mathcal{R}_{\delta_n} (v_0)$ \Comment{Recompression to a tolerance $\delta_n$}
		\EndFor
	\end{algorithmic}
\end{algorithm}

In practice, we apply a recompression operator $\mathcal R_{\delta}$ of the form \eqref{eq:recompression}, with $\delta=\delta_{\mathrm{rel}}\,\mathrm{res}$, after every addition of two elements in $L^2(\Omega_1 \times \Omega_2)$. 
Here $\mathrm{res}$ denotes the current residual error estimator, and $\delta_{\mathrm{rel}}$ is a small parameter ensuring a bound on the relative error.
In the case of SDC, we need to use an additional recompression operator with a smaller relative tolerance in order to approximate the residual occuring implicitly in line 7 of the algorithm accurately enough. Here, the recompression tolerance is chosen relative to the current residual norm. See also \cite{DGR00} for a derivation of SDC based on residual and error functions using the spectral integration matrix.

\subsection{Quasi-optimal ranks}\label{sec:quasiopt}

Given $v \in L^2(\Omega)$ and $\varepsilon >0$, the low-rank best approximation of $v$ satisfies
\begin{equation}
\label{charac_Rdelta}
	\mathcal{R}_\varepsilon (v)\in \argmin_{\|v-w\|\leq \eps} \rank(w).
\end{equation}

However, the functions we want to approximate are not exactly known, 
hence we cannot hope to compute this optimal
low-rank representation.
We thus aim for a relaxed notion of ranks,
where we replace the constraint $\|v-w\|\leq \eps$ by a penalization of $\|v-w\|$.
Soft thresholding is naturally suited for this purpose,
as illustrated by the following result.

\begin{prop} \label{lm:qo_rank}
	Let $v \in L^2(\Omega)$ and $\alpha > 0$. It then holds that
	\begin{equation*}
		\| v - \mathcal S_\alpha v \|^2 = \min_{w \in L^2(\Omega)} \left\{ \| v - w \|^2 + \alpha^2 \rank(w) \right\}.
	\end{equation*}
\end{prop}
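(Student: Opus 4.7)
The plan is to reduce the minimization over all $w \in L^2(\Omega)$ to a discrete optimization over the rank parameter $r \in \mathbb{N}_0$, using the Eckart--Young--Schmidt theorem to identify the optimal $w$ of each fixed rank.

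More precisely, I would first observe that any $w$ of infinite rank gives $\alpha^2 \rank(w) = +\infty$, so it suffices to minimize over $w$ of finite rank. For each fixed $r \in \mathbb{N}_0$, the quantity $\|v - w\|^2$ is minimized over $w$ of rank at most $r$ by the truncated SVD $v_r = \sum_{k=1}^{r} \sigma_k v_k^{(1)} \otimes v_k^{(2)}$, with minimal value $\|v - v_r\|^2 = \sum_{k > r} \sigma_k^2$. Therefore
\[
  \min_{w \in L^2(\Omega)} \bigl\{ \|v-w\|^2 + \alpha^2 \rank(w) \bigr\}
  = \min_{r \in \mathbb{N}_0} \Bigl\{ \sum_{k > r} \sigma_k^2 + \alpha^2 r \Bigr\}.
\]

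Next, I would analyze the right-hand side as a function of $r$. Passing from $r$ to $r+1$ changes it by $\alpha^2 - \sigma_{r+1}^2$, which is strictly negative when $\sigma_{r+1} > \alpha$ and non-negative when $\sigma_{r+1} \leq \alpha$. By monotonicity of the singular values, the function is therefore non-increasing as long as $\sigma_{r+1} > \alpha$ and non-decreasing afterwards, so its minimum is attained at $r^{*} = |\{k \in \mathbb{N} \colon \sigma_k > \alpha\}|$, which is finite (possibly zero), since the $\sigma_k$ tend to $0$.

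Finally, I would compute the minimal value at $r^{*}$ and match it with $\|v - \mathcal S_\alpha v\|^2$. From the definition of $\mathcal S_\alpha$ and orthonormality of the singular vectors, one directly obtains $\|v - \mathcal S_\alpha v\|^2 = \sum_{k \in \mathbb{N}} \min\{\sigma_k, \alpha\}^2$. On the other hand,
\[
  \sum_{k > r^{*}} \sigma_k^2 + \alpha^2 r^{*}
  = \sum_{k \colon \sigma_k \leq \alpha} \sigma_k^2 + \sum_{k \colon \sigma_k > \alpha} \alpha^2
  = \sum_{k \in \mathbb{N}} \min\{\sigma_k, \alpha\}^2,
\]
which gives the claimed equality. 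No step is really an obstacle here; the only point requiring a bit of care is ensuring that the discrete minimization is actually attained (which follows from $\sigma_k \to 0$) and handling the degenerate case $\sigma_1 \leq \alpha$, where $r^{*} = 0$ and $\mathcal S_\alpha v = 0$, in which case both sides equal $\sum_k \sigma_k^2 = \|v\|^2$.
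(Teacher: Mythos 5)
Your proof is correct, and it follows a genuinely different route from the paper. The paper invokes Mirsky's inequality (Lemma~\ref{mirsky}, proved in the appendix via a Hermitian dilation and bistochastic matrices) to obtain the termwise lower bound $\|v-w\|^2 + \alpha^2\rank(w) \geq \sum_k |\sigma_k(v)-\sigma_k(w)|^2 + \alpha^2\mathds{1}_{\sigma_k(w)\neq 0} \geq \sum_k \min(\sigma_k(v)^2,\alpha^2)$, and then notes equality is attained at $w=\mathcal H_\alpha(v)$. You instead split the minimization: first over $w$ of fixed rank $r$ using the Eckart--Young--Schmidt theorem (the optimal $w$ is the truncated SVD, with value $\sum_{k>r}\sigma_k^2$), and then over $r\in\mathbb{N}_0$ via a simple finite-difference monotonicity argument pinning down the optimum at $r^*=|\{k:\sigma_k>\alpha\}|$. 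Your approach is more elementary in that it only needs the best-rank-$r$ approximation property of the truncated SVD, which the paper already states as a well-known fact, and entirely avoids Mirsky's inequality; the price is a slightly longer argument with the intermediate reduction to a discrete optimization. One small thing worth noting explicitly: passing from $\min_w$ to $\min_r$ requires observing that $\rank(v_r)$ may be strictly less than $r$ when $\sigma_{r}=0$, but since then $\sum_{k>\rank(v_r)}\sigma_k^2=\sum_{k>r}\sigma_k^2$, this only helps the $\leq$ direction and the two minima still coincide — your argument implicitly handles this but it is worth making explicit.
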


\begin{proof}
For any $w \in L^2(\Omega)$, using the Mirsky inequality from Lemma \ref{mirsky}, one can estimate
	\begin{align*}
		\| v -w \|^2 +\alpha^2\rank(w) 
		& \geq \sum_{k\in\N} |\sigma_k(v) - \sigma_k(w)|^2 +\alpha^2\mathds 1_{\sigma_k(w)\neq 0}\\
		&\geq \sum_{k\in\N} \min(\sigma_k(v)^2, \alpha^2)=\| v - \mathcal S_\alpha v \|^2,
	\end{align*}
	where we distinguished the cases $\sigma_k(w)=0$ and $\sigma_k(w)\neq 0$ to obtain the second line. Equality is attained for $w=\mathcal H_\alpha(u)$.
\end{proof}

Given $v \in L^2(\Omega)$ and $\eps \in (0,\|v\|)$, there exists $\alpha>0$ such that $\varepsilon = \norm{ v - \mathcal{S}_\alpha(v)}$. In view of Proposition \ref{lm:qo_rank} we will use the quantity 
\begin{equation}\label{eq:quasioptsubst}
    \frac{\norm{ v - \mathcal{S}_\alpha(v)}^2}{\alpha^2}=\frac{\eps^2}{\alpha^2} 
\end{equation}
as a benchmark for the ranks of an approximation of $v$.

\begin{remark}
When the singular values of $v$ decay regularly, these quantities correspond to quasi-optimal ranks, that is, they are bounded by the optimal ranks up to a constant. Indeed, it is shown in \cite[Prop.~3.6]{BS17} that
\[
  \rank(\mathcal{R}_\varepsilon(v)) \leq C \varepsilon^{-\frac1s} \quad \text{implies} \quad
    \frac{\norm{ v - \mathcal{S}_\alpha(v)}^2}{\alpha^2}  \leq  C_s \varepsilon^{-\frac1s},
\]
where $C_s>0$ depends on $C>0$ and $s>0$. Moreover
\[
  \rank(\mathcal{R}_\varepsilon(v)) \leq c \bigl( 1+ \abs{\log \varepsilon} \bigr)^{\beta} \quad \text{implies} \quad
    \frac{\norm{ v - \mathcal{S}_\alpha(v)}^2}{\alpha^2}  \leq  C_{c,\beta} \bigl( 1 +  \abs{\log \varepsilon} \bigr)^{\beta},
 \]
with $C_{c,\beta}>0$ depending on $c$ and $\beta$. Thus in both cases,
\eqref{eq:quasioptsubst} yields quasi-optimal ranks.
\end{remark}

\section{Analysis of the Picard iteration}\label{sec:analysis}

In this section, we analyze the accuracy of the proposed Gauss-Legendre-Picard iteration scheme and provide bounds on the ranks of the approximations at both intermediate time steps $t_1, t_2, \dots, t_J \in ((n-1)h, nh)$ and subintervals boundaries $nh$ for  $1 \leq n \leq N$. The resulting estimates can then be combined together to a global quasi-optimality result.

For notational convenience, we introduce the following simplified notation: since we restrict our considerations to the case of the Gauss-Legendre-Picard iteration, we denote by $ \Phi$ and $\mathbf\Phi$ the equivalent fixed-point iterations $ \Phi_{\mathrm{Picard}}$ and $\mathbf\Phi_{\mathrm{Picard}}$ but formulated in untwisted variables, that is,
\[
\Phi\mathbf u(t)= e^{it\Delta} u_0 - i\int_0^t\sum_{m=1}^J e^{i(t-t_m)\Delta}Vu_m\,\ell_m(s)\,ds
\]
and
\begin{equation}
\label{eq_untwisted_phi}
(\mathbf \Phi \mathbf u)_j = \Phi \mathbf u(t_j)= e^{it_j\Delta} u_0 - i\sum_{m=1}^J \omega_{j,m}e^{i(t_j-t_m)\Delta}Vu_m
\end{equation}
for $\mathbf u \in L^2(\Omega)^J$.
Note that the fixed-point iterations on the original and untwisted variable have both the same contraction constant $\rho$. We also omit the parentheses when applying the (nonlinear) operators $\Phi$, $\mathcal S_\alpha$, $\mathcal H_\alpha$ and $\mathcal R_\delta$.

\begin{figure}[h] 
	\centering
	\begin{minipage}[t]{\linewidth}
		\hspace*{0.25\linewidth}
	\begin{tikzpicture}[scale=1, font=\small]
		\node[label=left:{$u_0^\diamond$}] at (0,1) {$\bullet$};
		\node[label=left:{$u_0^\ast$}] at (0,0) {$\bullet$};
		
		\node[label=right:{$u(h)$, exact evolution}] at (4,2.6) {$\bullet$};
		\node[blue,label=right:{$u^\diamond(h)$, collocation solution}] at (4,1.7) {$\bullet$};
		\draw[->,thick] (0,1) to[bend left=20] (4,2.6);
		\draw[blue,->,thick,dashed] (0,1) to[bend left=10] (4,1.7);
		
		\node[blue,label=right:{$u^\ast(h)$, collocation solution}] at (4,0.7) {$\bullet$};
		\node[red,label=right:{$\tilde{u}(h) = \mathcal R_\delta \Phi \mathbf u^K(h)$, recompresed scheme}] at (4,0.1) {$\bullet$};
		\draw[blue,->,thick,dashed] (0,0) to[bend left=10] (4,0.7);
		\draw[red,->,thick,dotted] (0,0) to[bend right=15] (4,0.1);
		
		\draw[->] (-1,-1) -- (5,-1) node [pos=1.05] {$t$};
	\end{tikzpicture}
	\end{minipage}
	\caption{Visualization of the decomposition of the local error in Proposition~\ref{prop_local_error}.}
	\label{fig_local_error_decomposition}
\end{figure}
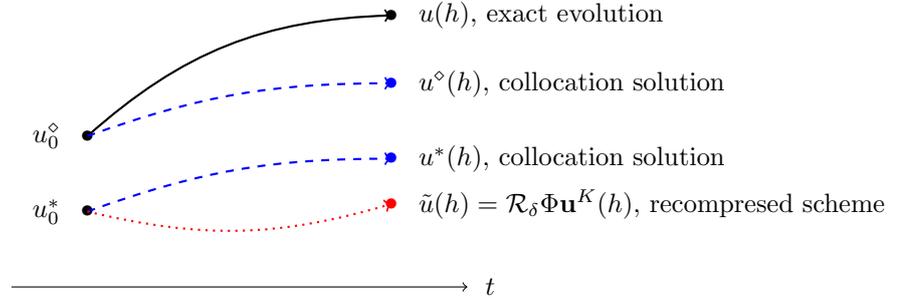

Let $u^{\ast}(t)$ be the exact solution of the collocation formulation \eqref{collocation} on $[0,h]$ with initial data $u_0^*$, that is,
\begin{equation*}
u^{\ast}(t) = e^{it \Delta} u_0^{\ast} - i \int_{0}^{t} \sum_{m=1}^{J} e^{i(t - t_m) \Delta} V u^{\ast}_m \ell_m(s) \, ds, \quad \text{for} \quad t \in [0,h],
\end{equation*}
with $u^*_m=u^*(t_m)$, and let $u^{\diamond}(t)$ be the solution of the collocation formulation on $[0,h]$ with initial data $u_0^\diamond$. We further write $u(t)$ for the exact solution of the Schrödinger equation with initial data $u^{\diamond}_0$ and $\tilde{u}(t)$ for the approximate solution produced by the scheme initialized at $u_0^\ast$, see Figure~\ref{fig_local_error_decomposition}.

The following analysis requires some smoothness assumptions on the potential $V$. We therefore assume that the potential is uniformly bounded, that is,  $C_V < \infty$. Moreover, we assume that the quantities
\begin{equation*}
	\|(\partial_t-i\Delta+iV)^{2J+1}u^\diamond \|_{\mathcal W(0,h)} \quad \text{and} \quad \|\partial_t^{J} (e^{-it \Delta}V u) \|_{\mathcal W(0,h)}
\end{equation*}
can also be bounded from above. In order to derive rank bounds for the iterates of the considered algorithm, we also assume that the potential function has finite rank
\begin{equation*}
	V = \sum_{\ell= 1}^{\rank(V)} V_1^{(\ell)} \otimes V_2^{(\ell)}, \quad \text{for} \quad V_i^{(\ell)} \in L^2(\Omega_i), \ i = 1,2,
\end{equation*}
with $\rank(V) \in \mathbb N$. We emphasize that this is the rank of the function $(x_1,x_2)\mapsto V(x_1,x_2)$, which differs from the (potentially infinite) rank of the multiplication operator $u\mapsto Vu$.

\subsection{Local accuracy bound}

Since we aim at approximating the solution of the Schrödinger equation in the untwisted variable, we formulate the accuracy results accordingly in terms of the original variable. However, we still make use of the iteration \eqref{twisted_picard} in the proofs. Note that the accuracy results in this section hold independently of the particular strategy for decreasing the soft thresholding parameter $\alpha$.

The following theorem provides a local accuracy bound for the analyzed scheme. Note that here, if iterated over the subintervals, the bound will depend only linearly on the number of needed subintervals, which would correspond to a linear error accumulation and therefore achieve the optimal rate one can hope for in this setting.

\begin{prop} \label{prop_local_error}
Let $\delta$ be the recompression parameter for the approximation at the subinterval boundary and $\varepsilon$ the tolerance of the Gauss-Legendre-Picard iteration. The scheme then achieves an accuracy of
\[
\|u(h)-\tilde u(h)\|\leq \delta+\frac{\rho\,\eps}{1-\rho}+\|u_0^\diamond-u_0^*\|+\kappa_{2J}h^{2J+1},
\]
where we recall that $\rho = \Lambda_J C_V h$ denotes the contraction constant and 
\begin{equation}
	\kappa_{2J} = \frac{(J!)^4}{(2J+1)!(2J)!^2} \|(\partial_t-i\Delta+iV)^{2J+1}u^\diamond \|_{\mathcal W(0,T)}. \label{kappa2J}
\end{equation}
\end{prop}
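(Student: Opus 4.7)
\medskip
\textbf{Proof plan.}
The plan is to split the error via the triangle inequality into three pieces,
\[
\|u(h)-\tilde u(h)\|\leq \|u(h)-u^\diamond(h)\|+\|u^\diamond(h)-u^*(h)\|+\|u^*(h)-\tilde u(h)\|,
\]
following the decomposition of Figure~\ref{fig_local_error_decomposition}, and to bound each piece by one of the four terms in the statement. The first piece captures the intrinsic collocation discretization error for the common initial datum $u_0^\diamond$, the second records the sensitivity of the collocation scheme to a perturbation of the initial datum, and the third quantifies the combined effect of truncating the Picard iteration after finitely many steps and of the final recompression at $t=h$.

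For the third (scheme-level) piece, I would further insert the pre-recompressed iterate:
\[
\|u^*(h)-\tilde u(h)\|\leq \|u^*(h)-\Phi\mathbf u^K(h)\|+\|\Phi\mathbf u^K(h)-\mathcal R_\delta\Phi\mathbf u^K(h)\|,
\]
with $\mathbf u^K$ the last Picard iterate. The second summand is at most $\delta$ by definition of $\mathcal R_\delta$. For the first summand, the exact collocation solution is the fixed point $u^*(h)=\Phi\mathbf u^*(h)$, and Proposition~\ref{prop_contractivity} applied pointwise at $t=h$ yields $\|\Phi\mathbf u^*(h)-\Phi\mathbf u^K(h)\|\leq \rho\|\mathbf u^*-\mathbf u^K\|_J$. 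The stopping criterion $\|\mathbf\Phi\mathbf u^K-\mathbf u^K\|_J\leq\varepsilon$, combined with the contraction inequality $\|\mathbf u^*-\mathbf u^K\|_J\leq \|\mathbf\Phi\mathbf u^*-\mathbf\Phi\mathbf u^K\|_J+\|\mathbf\Phi\mathbf u^K-\mathbf u^K\|_J\leq \rho\|\mathbf u^*-\mathbf u^K\|_J+\varepsilon$, then gives $\|\mathbf u^*-\mathbf u^K\|_J\leq \varepsilon/(1-\rho)$, producing the contribution $\rho\varepsilon/(1-\rho)$.

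For the second (stability) piece, I would exploit that $u^\diamond$ and $u^*$ are two outputs of the same $J$-stage Gauss--Legendre collocation scheme applied to the linear Schrödinger equation, differing only in their initial datum. As recalled around~\eqref{eq:exptestproblem}, the Gauss method is isometry-preserving on the Dahlquist test problem with purely imaginary eigenvalue, and more generally it preserves all quadratic invariants; applied to a linear equation with skew-Hermitian generator, the one-step map $u_0\mapsto u^{\mathrm{coll}}(h)$ is therefore a linear $L^2(\Omega)$-isometry, so linearity immediately yields $\|u^\diamond(h)-u^*(h)\|=\|u_0^\diamond-u_0^*\|$.

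Finally, the first piece is the classical superconvergent local error of $J$-stage Gauss collocation applied to the smooth equation $(\partial_t-i\Delta+iV)u=0$ with datum $u_0^\diamond$. The plan is to start from the exact identity
\[
u(h)-u^\diamond(h)=-i\int_0^h e^{i(h-s)\Delta}Vu(s)\,ds+i\sum_{m=1}^J\omega_m\, e^{i(h-t_m)\Delta}Vu^\diamond(t_m),
\]
split the right-hand side into the Gauss--Legendre quadrature error of the smooth integrand $s\mapsto e^{i(h-s)\Delta}Vu(s)$ plus a remainder controlled by the same contraction constant $\rho$ coming from replacing $u(t_m)$ by $u^\diamond(t_m)$, and absorb the remainder term on the left-hand side. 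The standard Peano-kernel representation of the $J$-point Gauss--Legendre quadrature error then produces precisely the prefactor $(J!)^4/[(2J+1)!(2J)!^2]$ together with the $(2J+1)$-st power of the Schrödinger operator applied to $u^\diamond$. This step is the main technical obstacle; since it is a purely quadrature-theoretic computation independent of the low-rank machinery, it is natural to isolate it into a separate lemma (cf.\ Lemma~\ref{lm:locerr2J}) and invoke it here. Collecting the four contributions yields the claimed estimate.
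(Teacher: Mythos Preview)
Your decomposition and your treatment of the recompression term, the Picard-stopping term, and the isometry term are exactly what the paper does, and the argument you give for $\|\mathbf u^*-\mathbf u^K\|_J\leq\varepsilon/(1-\rho)$ is the same contraction-plus-stopping-criterion computation used there.

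The gap is in your sketched route to the collocation error $\|u(h)-u^\diamond(h)\|\leq \kappa_{2J}h^{2J+1}$. Your plan is to write the Duhamel identity, view it as a Gauss--Legendre quadrature error for the integrand $g(s)=e^{i(h-s)\Delta}Vu(s)$, and absorb the mismatch $u(t_m)\leftrightarrow u^\diamond(t_m)$ on the left. Two problems: first, the remainder you want to absorb is $\rho$ times $\|\mathbf u-\mathbf u^\diamond\|_J$ at the \emph{interior} nodes, whereas the left-hand side is only $\|u(h)-u^\diamond(h)\|$; if you instead run the estimate uniformly over $t\in[0,h]$ so that absorption is legitimate, the intermediate quadrature error on $[0,t]$ is only of order $h^{J+1}$ (this is precisely Lemma~\ref{localerrorboundinside}), and the superconvergence is lost. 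Second, the $2J$-th time derivative of $g(s)=e^{i(h-s)\Delta}Vu(s)$ does not reduce to $(\partial_t-i\Delta+iV)^{2J+1}u^\diamond$: the integrand involves the \emph{exact} solution $u$, for which $(\partial_t-i\Delta+iV)u=0$, and the propagator is the free one $e^{i(h-s)\Delta}$, so the derivatives generate commutators of $\Delta$ and $V$ rather than powers of the full Schr\"odinger operator.

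The paper's proof of Lemma~\ref{lm:locerr2J} sidesteps both issues with a single device: set $f(t)=e^{i(h-t)(\Delta-V)}u^\diamond(t)$, so that $u^\diamond(h)-u(h)=f(h)-f(0)=\int_0^h\partial_t f$, and observe that $\partial_t f(t)=e^{i(h-t)(\Delta-V)}(\partial_t-i\Delta+iV)u^\diamond(t)$ vanishes at every collocation node because $u^\diamond$ satisfies \eqref{collocation}. Thus the Gauss--Legendre sum $\sum_j\omega_j\partial_t f(t_j)$ is zero for free, no absorption is needed, and the quadrature error directly yields $\|\partial_t^{2J+1}f\|=\|(\partial_t-i\Delta+iV)^{2J+1}u^\diamond\|$ with the stated constant. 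Using the \emph{full} flow $e^{i(h-t)(\Delta-V)}$ rather than the free flow is what makes the differential operator come out as the clean power and avoids any commutator bookkeeping.
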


\begin{remark}
By developing the term $(\partial_t-i\Delta+iV)^{2J+1}u^\diamond$, and using commutator identities, one can see that the constant $\kappa_{2J}$ involves $4J$ derivatives of $V$, but only $2J$ derivatives of~$u^\diamond$, which translate into $2J$ derivatives on $u$. This gain in regularity requirements based on commutator estimates is already observed in \cite{AB23}, where it is also shown to hold when the equation contains the nonlinearity~$|u|^2u$.
\end{remark}

The proof of the above proposition will make use of two auxiliary lemmas. For simplicity of notation, we formulate them for the first interval $[0,h]$. However, they also analogously hold on each of the other subintervals $[nh,(n+1)h]$. First of all, we note that the proposed scheme is norm preserving.

\begin{lemma}[Isometry preservation] \label{lm:isometry}
	It holds that $\|u^*(h)-{u}^\diamond(h)\|=\|u_0^{\ast} - u_0^\diamond\|$.
\end{lemma}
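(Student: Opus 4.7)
The plan is to exploit the fact that the Gauss collocation method preserves quadratic invariants when the underlying flow is generated by a skew-adjoint operator, and the $L^2$-norm squared is precisely such an invariant for the Schr\"odinger evolution.

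First, I would use linearity. Since $u^*$ and $u^\diamond$ are both solutions of the same collocation formulation (driven by $t\mapsto i(\Delta-V)$, with the same nodes on $[0,h]$) and differ only through their initial data, the difference $w(t) := u^*(t) - u^\diamond(t)$ is itself a polynomial of degree $J$ in $t$ satisfying $w(0)=u_0^*-u_0^\diamond$ and, at each collocation node,
\[
\partial_t w(t_m) = i(\Delta - V)\,w(t_m), \qquad 1 \le m \le J.
\]
It therefore suffices to prove the norm identity $\|w(h)\| = \|w(0)\|$ for an arbitrary collocation solution $w$.

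Next, since $w$ is a polynomial of degree $J$ with values in $L^2(\Omega)$, the map $t \mapsto \|w(t)\|^2 = \langle w(t), w(t)\rangle$ is a real polynomial of degree at most $2J$, and its derivative $\tfrac{d}{dt}\|w(t)\|^2 = 2\Re\langle w(t), \partial_t w(t)\rangle$ is a real polynomial of degree at most $2J-1$. Because the Gauss--Legendre quadrature with $J$ nodes is exact on polynomials of degree up to $2J-1$, we have
\[
\|w(h)\|^2 - \|w(0)\|^2 \;=\; \int_0^h 2\Re\langle w(s), \partial_t w(s)\rangle\,ds \;=\; \sum_{m=1}^J \omega_m\, 2\Re\langle w(t_m), \partial_t w(t_m)\rangle.
\]

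Finally, at each collocation point I would substitute the collocation relation $\partial_t w(t_m) = i(\Delta - V) w(t_m)$. Since $\Delta-V$ is self-adjoint on $L^2(\Omega)$ (assuming $V$ is real-valued), the quantity $\langle w(t_m), (\Delta-V)w(t_m)\rangle$ is real, hence
\[
\Re\langle w(t_m), \partial_t w(t_m)\rangle = \Re\bigl(i\,\langle w(t_m),(\Delta-V)w(t_m)\rangle\bigr) = 0
\]
for every $m$. Summing gives $\|w(h)\|^2 = \|w(0)\|^2$, which is the claim.

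There is no serious obstacle here: the one point needing care is just the bookkeeping of degrees (checking that $\frac{d}{dt}\|w\|^2$ really has degree $\le 2J-1$ so that Gauss--Legendre is exact), together with the self-adjointness of $\Delta - V$ required for the scalar product at the nodes to be real. Both are immediate under the standing assumptions on $V$, so the whole argument fits in a few lines.
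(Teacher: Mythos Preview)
Your overall strategy is exactly that of the paper, but there is a genuine gap in the execution: you assert that $w(t)=u^*(t)-u^\diamond(t)$ is a polynomial of degree $J$ in $t$, and this is false in the paper's setup. The collocation scheme here is the Lawson-type one obtained by applying Gauss collocation to the \emph{twisted} equation $\partial_t v=\mathcal F_t v$; it is $v^*(t)=e^{-it\Delta}u^*(t)$ that is a degree-$J$ polynomial, while $u^*(t)=e^{it\Delta}v^*(t)$ carries the factor $e^{it\Delta}$ and is not polynomial in $t$. Consequently your degree-counting argument for $\langle w,\partial_t w\rangle$ does not stand as written.

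The repair is exactly what the paper does: set $p(t)=e^{-it\Delta}w(t)$, which \emph{is} a polynomial of degree $J$ and satisfies the twisted collocation relation $\partial_t p(t_m)=-ie^{-it_m\Delta}Ve^{it_m\Delta}p(t_m)$. Since $e^{it\Delta}$ is unitary, $\|w(t)\|^2=\|p(t)\|^2$, so $\tfrac{d}{dt}\|w(t)\|^2=2\Re\langle p(t),\partial_t p(t)\rangle$ is now genuinely a polynomial of degree $\le 2J-1$ and Gauss--Legendre quadrature is exact. At the nodes one gets $\Re\langle p(t_m),\partial_t p(t_m)\rangle=\Re\bigl(-i\|e^{it_m\Delta}p(t_m)\|^2\bigr)=0$, and the conclusion follows. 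So the missing idea is simply to pass to the twisted variable before invoking the polynomial-degree argument.
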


Note that this result in fact only holds for the fixed-point solutions, and not for the intermediate iterations.

\begin{proof}
According to \eqref{collocation}, in the twisted variables, $p(t)= e^{-it \Delta} (u^{\ast}(t)- u^\diamond(t))$ is a polynomial of degree $J$, which satisfies the collocation identity
\[
\partial_t{p}(t_j) = \mathcal{F}_{t_j}(p(t_j))=-ie^{-it_j\Delta}Ve^{it_j\Delta}p(t_j),\qquad 1\leq j\leq J.
\]
As $ \langle \partial_t{p}(t), p(t) \rangle$ is a polynomial of degree $2J-1$, its Gauss quadrature is exact, hence
	\begin{align*}
		\|u^*(h)-u^\diamond(h)\|^2 -\|u_0^* - u_0^\diamond\|^2
		&= \| p(h) \|^2 - \| p(0) \|^2 = \int_0^h\frac{d}{dt}\|p(s)\|^2 \, ds \\
		&=  2\Re \int_0^h \langle \partial_t{p}(s), p(s) \rangle \, ds
		=2\Re \sum_{j=1}^J \omega_j \langle\partial_t{p}(t_j), p(t_j) \rangle\\
		&= 2\Re  \Big(-i \sum_{j=1}^J \|e^{it_j \Delta}p(t_j) \|^2\Big)=0. \qedhere
\end{align*}
\end{proof}

Next, we prove that the fixed-point is able to reach a local accuracy of order $2J+1$ at the subinterval boundary, which can then be translated into a global error bound of order $2J$.

\begin{lemma}[Local error at subinterval boundary] \label{lm:locerr2J}
	It holds that
	\begin{equation*}
		\|u^\diamond(h)-u(h)\| \leq \kappa_{2J}  h^{2J+1},
	\end{equation*}
	where the constant $\kappa_{2J}$ is defined as in \eqref{kappa2J}.
\end{lemma}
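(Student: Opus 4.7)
The claim is the classical superconvergence of the $J$-stage Gauss collocation method, here adapted to the twisted Schr\"odinger setting. The local error after a step of size $h$ is $O(h^{2J+1})$, and the prefactor $(J!)^4/((2J+1)!(2J)!^2)$ is exactly the Peano remainder mass of the $J$-point Gauss--Legendre quadrature rule on $[0,h]$. My proof would proceed in four steps.

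\emph{Step 1 (twisted variables).} Set $v(t)=e^{-it\Delta}u(t)$ and $v^\diamond(t)=e^{-it\Delta}u^\diamond(t)$. By unitarity of $e^{-ih\Delta}$, $\|u(h)-u^\diamond(h)\|=\|v(h)-v^\diamond(h)\|$. As in the proof of Lemma~\ref{lm:isometry}, $v^\diamond$ is the polynomial of degree $J$ satisfying $v^\diamond(0)=u_0^\diamond$ and the collocation identities $\partial_t v^\diamond(t_m)=\mathcal F_{t_m}v^\diamond(t_m)$, while $\partial_t v=\mathcal F_t v$ with $v(0)=u_0^\diamond$.

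\emph{Step 2 (defect and variation of constants).} Introduce $d(s)=\partial_t v^\diamond(s)-\mathcal F_s v^\diamond(s)$, which vanishes at every Gauss node. A direct computation using $\partial_s e^{-is\Delta}=-i\Delta\,e^{-is\Delta}$ gives the key identity
\[
d(s)=e^{-is\Delta}(Lu^\diamond)(s),\qquad L=\partial_t-i\Delta+iV.
\]
Since $\mathcal F_s$ is skew-adjoint, the propagator $U(t,s)$ of $\partial_t w=\mathcal F_t w$ is unitary, and variation of constants yields $v(h)-v^\diamond(h)=-\int_0^h U(h,s)\,d(s)\,ds$.

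\emph{Step 3 (Peano kernel of Gauss quadrature).} For any $\psi\in L^2(\Omega)$ with $\|\psi\|=1$, set $\phi(s)=U(h,s)^*\psi$, which has unit $L^2$-norm for every $s$. Because $d$ vanishes at each of the $J$ Gauss nodes, the $J$-point Gauss--Legendre quadrature applied to the scalar function $s\mapsto\langle d(s),\phi(s)\rangle$ evaluates to zero, so the remaining integral equals exactly the quadrature error. The Peano representation of this rule on $[0,h]$, which is exact on polynomials of degree $\leq 2J-1$ and whose kernel has constant sign with $L^1$-mass $(J!)^4 h^{2J+1}/((2J+1)!(2J)!^2)$, gives
\[
|\langle v(h)-v^\diamond(h),\psi\rangle| \leq \frac{(J!)^4\,h^{2J+1}}{(2J+1)!\,(2J)!^{2}}\,\bigl\|\partial_s^{2J}\langle d(\cdot),\phi(\cdot)\rangle\bigr\|_{L^\infty(0,h)}.
\]

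\emph{Step 4 (conjugation identity).} Differentiating the inner product and using $\partial_s\phi=\mathcal F_s\phi$ together with the skew-adjointness of $\mathcal F_s$, one obtains $\partial_s\langle d,\phi\rangle=\langle(\partial_s-\mathcal F_s)d,\phi\rangle$, and inductively $\partial_s^k\langle d,\phi\rangle=\langle(\partial_s-\mathcal F_s)^k d,\phi\rangle$. The computation of Step~2 in fact shows the operator identity $(\partial_s-\mathcal F_s)=e^{-is\Delta}\,L\,e^{is\Delta}$, so $(\partial_s-\mathcal F_s)^k=e^{-is\Delta}L^k e^{is\Delta}$ for every $k\geq 0$. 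Applying this with $d=(\partial_s-\mathcal F_s)v^\diamond$ yields
\[
\partial_s^{2J}\langle d,\phi\rangle=\langle(\partial_s-\mathcal F_s)^{2J+1}v^\diamond,\phi\rangle=\langle e^{-is\Delta}L^{2J+1}u^\diamond,\phi\rangle.
\]
Cauchy--Schwarz together with $\|\phi(s)\|=1$ gives $\|\partial_s^{2J}\langle d,\phi\rangle\|_{L^\infty}\leq\|L^{2J+1}u^\diamond\|_{\mathcal W(0,h)}$, and taking the supremum over $\psi$ concludes the proof.

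\emph{Main obstacle.} The identification in Step~4 is the delicate point; once the conjugation identity $(\partial_s-\mathcal F_s)=e^{-is\Delta}L e^{is\Delta}$ is spotted it telescopes cleanly, but without it a naive Leibniz expansion in terms of $\partial_t$, $-i\Delta$ and $iV$ would produce a proliferation of commutators and separate powers of $\Delta$ and $V$, and one would lose the fact that precisely $2J+1$ copies of the composite operator $L$ are acting on $u^\diamond$. This structural observation is what underlies the regularity gain noted in the remark following the statement.
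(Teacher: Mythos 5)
Your argument is correct and yields exactly the stated constant $\kappa_{2J}$, but it takes a noticeably longer path than the paper. Both proofs rest on the same superconvergence mechanism: the collocation defect vanishes at the Gauss nodes, so after applying a suitable unitary propagator the boundary error becomes a Gauss quadrature error, bounded by $\frac{(J!)^4}{(2J+1)!(2J)!^2}h^{2J+1}$ times the sup norm of the $2J$-th derivative, and iterated conjugation converts that derivative into $(\partial_t-i\Delta+iV)^{2J+1}u^\diamond$. The paper shortcuts all of your Steps 1--4 with a single construction: it sets $f(t)=e^{i(h-t)(\Delta-V)}u^\diamond(t)$, so that $f(h)-f(0)=u^\diamond(h)-u(h)$ without any variation-of-constants argument, $\partial_t f(t)=e^{i(h-t)(\Delta-V)}\bigl(Lu^\diamond\bigr)(t)$ vanishes at the Gauss nodes, and the higher derivatives $\partial_t^{2J+1}f=e^{i(h-t)(\Delta-V)}L^{2J+1}u^\diamond$ are immediate because the conjugating unitary is applied once and for all. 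Your conjugation identity $\partial_s-\mathcal F_s=e^{-is\Delta}Le^{is\Delta}$ is the twisted-variable shadow of the same observation, but in the paper it never needs to be isolated. The other divergence is in how the quadrature error is applied: you dualize against $\psi$ to reduce to a scalar Peano bound, whereas the paper proves and invokes a Hilbert-space-valued Gauss quadrature estimate directly (Lemma~\ref{quadrature_has_order_2J} in the appendix). Both are fine; dualization is a legitimate way to avoid proving the vector-valued bound, at the cost of an extra supremum at the end. In summary: correct, same key ideas, but your route through twisted variables, the propagator $U(h,s)$, duality, and the explicit conjugation identity is less economical than the paper's construction of $f$.
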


\begin{proof}
	
	Let $e^{it(\Delta-V)}$ be the flow map of the Schrödinger equation, which is well defined on $L^2(\Omega)$. For $t\in [0,h]$, setting
	\begin{equation*}
		f(t)=e^{i(h-t)(\Delta-V)}u^\diamond(t)
	\end{equation*}
	 yields
	\begin{equation} \label{eq:temp2}
		u^\diamond(h)-u(h)=f(h)-f(0)=\int_0^h \partial_tf(t)\,dt
	\end{equation}
	where
	\begin{equation*}
		\partial_tf(t)=e^{i(h-t)(\Delta-V)}(\partial_t-i\Delta+iV)u^\diamond(t).
	\end{equation*}
	
	Observe in particular that $\partial_tf(t_j)=0$ since $u^{\ast}(t)$ is the solution of a collocation method. We can thus bound \eqref{eq:temp2} by
	\begin{equation*}
		\|u^\diamond(h)-u(h)\| = \biggnorm{ \int_0^h \partial_tf(t)\,dt-\sum_{j=1}^J \omega_j\partial_tf(t_j)} \leq  \frac{(J!)^4  }{(2J+1)!(2J)!^2}\|\partial_t^{2J+1}f\|_{\mathcal W(0,h)}h^{2J+1}.
	\end{equation*}
	The last bound is a standard accuracy result for Gauss-Legendre quadrature, and can be found for example in \cite{stoer_introduction_1980}. To make sure that it also works in the Hilbert-space-valued case, we provide a proof with explicit constants in Lemma~\ref{quadrature_has_order_2J} of the appendix.
	
	The statement of the lemma then follows from
	\begin{align*}
		\|\partial_t^{2J+1}f\|_{\mathcal W(0,h)} &= \max_{t \in [0,h]} \|e^{i(h-t)(\Delta-V)}(\partial_t-i\Delta+iV)^{2J+1}u^\diamond(t)\| \\
		&= \|(\partial_t-i\Delta+iV)^{2J+1}u^\diamond\|_{\mathcal W(0,h)}. \qedhere
	\end{align*}
\end{proof}

Inside the interval $[0,h]$, we obtain a local accuracy bound of order $J+1$.
\begin{lemma}[Local error bound inside subintervals]
\label{localerrorboundinside}
	It holds that
	\begin{equation*}
		\|u^*(t) - u(t)\| \leq \frac{\kappa_J}{1-\rho}h^{J+1}+ \frac{1}{1- \rho} \|u_0^* - u^\diamond_0\|,
	\end{equation*}
	for all $t \in [0,h]$, where
	\[
	\kappa_J=\frac{J!}{\sqrt{2J+1}(2J)!}\|\partial_t^{J} (e^{-it \Delta}V u) \|_{\mathcal W(0,h)}.
	\]
\end{lemma}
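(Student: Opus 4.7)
The plan is to introduce the intermediate collocation solution $u^\diamond(t)$ starting from $u_0^\diamond$ (so that it matches the initial data of the exact solution $u(t)$) and apply the triangle inequality
\[
\|u^*(t) - u(t)\| \leq \|u^*(t) - u^\diamond(t)\| + \|u^\diamond(t) - u(t)\|.
\]
Each term is then handled separately in the twisted variables $v^*$, $v^\diamond$ and $v$, in which the untwisting $e^{it\Delta}$ preserves the $L^2(\Omega)$ norm.

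For the first term, I would use stability of the collocation scheme with respect to initial data. Since $v^*$ and $v^\diamond$ both satisfy the integral formulation \eqref{twisted_picard} but with different initial data, subtracting and using $\|\mathcal F_{t_m}\cdot\|\leq C_V\|\cdot\|$ gives, at any $t\in[0,h]$,
\[
\|v^*(t)-v^\diamond(t)\|\leq \|v_0^*-v_0^\diamond\|+\rho\,\|v^*-v^\diamond\|_J.
\]
Taking the maximum over $t\in\{t_1,\dots,t_J\}$ first yields $\|v^*-v^\diamond\|_J\leq (1-\rho)^{-1}\|v_0^*-v_0^\diamond\|$, which reinserted produces $\|v^*(t)-v^\diamond(t)\|\leq(1-\rho)^{-1}\|u_0^*-u_0^\diamond\|$ for every $t\in[0,h]$.

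For the second term, letting $g(s)=\mathcal F_s v(s)=-ie^{-is\Delta}Vu(s)$, subtract the exact Duhamel-type formula $v(t)=v_0^\diamond+\int_0^tg(s)\sdd s$ from the collocation formula for $v^\diamond$ to obtain
\[
v^\diamond(t)-v(t)=\int_0^t\sum_{m=1}^J\ell_m(s)\mathcal F_{t_m}\bigl(v^\diamond(t_m)-v(t_m)\bigr)\sdd s+E(t),
\]
where $E(t)=\int_0^t\bigl(\sum_m\ell_m(s)g(t_m)-g(s)\bigr)\sdd s$ collects the Lagrange interpolation error integrated in time. Exactly as before, this yields $\|v^\diamond-v\|_J\leq(1-\rho)^{-1}\max_{t\in[0,h]}\|E(t)\|$ and, reinserted, $\|v^\diamond(t)-v(t)\|\leq(1-\rho)^{-1}\max_{t\in[0,h]}\|E(t)\|$.

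The key step is bounding $\|E\|_{\mathcal W(0,h)}$ by $\kappa_J h^{J+1}$. For scalar-valued $\phi\circ g$ (with $\phi$ a norm-one functional on $L^2(\Omega)$), Cauchy's form of the interpolation remainder gives $|\phi(g(s)-(I_Jg)(s))|\leq(J!)^{-1}|\omega_J(s)|\,\|(\phi\circ g)^{(J)}\|_\infty$ where $\omega_J(s)=\prod_m(s-t_m)$; taking the supremum over $\phi$ transfers the pointwise bound to $L^2(\Omega)$-norms. Then Cauchy--Schwarz gives $\|E(t)\|\leq \int_0^h\|g(s)-(I_Jg)(s)\|\sdd s\leq(J!)^{-1}\sqrt{h}\,\|\omega_J\|_{L^2(0,h)}\,\|g^{(J)}\|_{\mathcal W(0,h)}$. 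Since the Gauss--Legendre nodes on $[0,h]$ are the zeros of the shifted Legendre polynomial, $\omega_J$ is its monic version and $\|\omega_J\|_{L^2(0,h)}^2=\frac{h^{2J+1}(J!)^4}{(2J+1)(2J)!^2}$, producing exactly the constant $\kappa_J=\frac{J!}{\sqrt{2J+1}(2J)!}\|g^{(J)}\|_{\mathcal W(0,h)}$ after normalization.

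Combining the two estimates and using the $L^2$-isometry of the untwisting delivers the stated bound. The main subtlety I expect is the Hilbert-space Cauchy remainder used to control $\|E\|_{\mathcal W(0,h)}$; I would justify it either by the duality argument sketched above or, as in the proof of Lemma~\ref{lm:locerr2J}, by a Peano-kernel representation. Everything else reduces to the same contractivity computation that already underlies Proposition~\ref{prop_contractivity}.
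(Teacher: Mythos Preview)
Your proof is correct and uses essentially the same ingredients as the paper: the contractivity estimate from Proposition~\ref{prop_contractivity} for the absorption step, and the Lagrange interpolation error bound (your Cauchy remainder plus Cauchy--Schwarz on the monic Legendre polynomial is exactly Lemma~\ref{lem_inner_quadrature}). The only cosmetic difference is that you split via the intermediate $u^\diamond$ and run two absorption arguments, whereas the paper compares $u^*$ with $u$ directly in a single decomposition
\[
\|u^*(t)-u(t)\|\leq \|u_0^*-u_0^\diamond\|+\rho\|u^*-u\|_{\mathcal W(0,h)}+\kappa_Jh^{J+1}
\]
and absorbs once; both routes yield the identical bound.
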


\begin{proof}
	For the twisted variables $v^{\ast}(t) = e^{-it \Delta} u^{\ast}(t)$ and  $v(t) = e^{-it \Delta} u(t)$ it holds that
	\begin{align*}
		\| u^{\ast}(t) - u(t) \| &\leq \| u^\ast_0 - u^\diamond_0 \| +  \biggnorm{ \int_{0}^{t} \sum_{m=1}^{J} \mathcal{F}_{t_m}(v_m^{\ast} - v(t_m))  \ell_m(s) \, ds } \\
		&\quad + \biggnorm{ \int_{0}^{t} \Big( \sum_{m=1}^{J} \mathcal{F}_{t_m}  v(t_m) \ell_m(s) - \mathcal{F}_s v(s) \Big) \, ds }.
	\end{align*}
	By Proposition~\ref{prop_contractivity}, the second term is bounded by $\rho\|v^*-v\|_{\mathcal W(0,h)}=\rho\|u^*-u\|_{\mathcal W(0,h)}$.\\
	Applying Lemma~\ref{lem_inner_quadrature} to $g(s) = \mathcal{F}_s v(s)=-ie^{is\Delta}Vu(s)$, we bound the third term by $\kappa_J h^{J+1}$.
	We conclude by taking the supremum over all $t\in [0,h]$, and passing the second term to the left hand side.
\end{proof}

We are finally in the position to prove our local accuracy bound.

\begin{proof}[Proof of Proposition \ref{prop_local_error}]
	Let $K$ denote the value of $k$ at the end of Algorithm~\ref{alg:Picard}, for which we know that
	\begin{equation*}
		\| \mathbf \Phi \mathbf u^{K} - \mathbf u^{K} \|_J \leq \varepsilon.
	\end{equation*}
	We can then decompose the error according to
	\begin{equation*}
		\|u(h)-\tilde u(h)\|
		\leq \|u(h)-u^\diamond(h)\|+\|u^\diamond(h)-u^*(h)\|+\|u^*(h)-\Phi \mathbf u^K(h)\|+\|\Phi \mathbf u^K(h)-\tilde u(h)\|.
	\end{equation*}
	
	The first term can be bounded by $ \kappa_{2J} h^{2J+1}$ using Lemma \ref{lm:isometry}, and the second one is equal to $ \| u^\diamond_0 - u^\ast_0 \|$ according to Lemma~\ref{lm:locerr2J}.
For the third term, Proposition~\ref{prop_contractivity} yields
\[
\|u^*-\Phi \mathbf u^K\|_{\mathcal W(0,h)}\leq \rho \|\mathbf u^*-\mathbf u^K\|_J\leq \rho\big(\|u^*-\Phi \mathbf u^K\|_{\mathcal W(0,h)}+\|\mathbf u^K-\mathbf \Phi \mathbf u^{K}\|_J\big),
\]
and thus
\[
\|u^*(h)-\Phi \mathbf u^K(h)\|\leq \|u^*-\Phi \mathbf u^K\|_{\mathcal W(0,h)}\leq \frac{\rho}{1-\rho}\|\mathbf u^K-\mathbf \Phi \mathbf u^{K}\|_J\leq \frac{\rho\eps}{1-\rho}.
\]
Finally, by the characterisation \eqref{charac_Rdelta} of $\mathcal R_\delta$, the last term is bounded by the recompression tolerance $\delta$.
\end{proof}

\subsection{Local rank bounds}

Next, we proceed to the derivation of local rank bounds for the iterates of Algorithm \ref{alg:Picard} with constant decrease of $\alpha_k$, motivated by the notion of quasi-optimality discussed in Section \ref{sec:quasiopt}.
As introduced in the previous proof, let $K$ denote the value of $k$ at the end of Algorithm~\ref{alg:Picard}. Additionally, we set
\begin{equation*}
	\mathbf u^\ast = (u^\ast(t_m))_{1 \leq m \leq J} \quad\text{and}\quad \mathbf u = (u(t_m))_{1 \leq m \leq J}.
\end{equation*}

The following lemma shows that the Gauss-Legendre-Picard iterations produce quasi-optimal low-rank approximations. It simplifies the analysis from \cite{BS17} by allowing a constant decrease of $\alpha$.

\begin{lemma} \label{lem_rank_uk}
Starting from the initial data $u_0^*$, the approximations generated by Algorithm~\ref{alg:Picard} in untwisted variables satisfy
\[
\| \mathbf u^{k+1}- \mathbf u^\ast \|_J \leq \frac{2}{1-\rho}\| \mathbf u^\ast - \boldsymbol{\mathcal S}_{\alpha_k} \mathbf u^*\|_J, \quad \text{for all} \quad 0 \leq k \leq K.
\]
\end{lemma}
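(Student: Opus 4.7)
The plan is to reduce the claim to a one-step error recursion and then close it by induction, exploiting the geometric decrease $\alpha_{k+1} = \theta \alpha_k$ together with Lemma~\ref{lemma:thresholds} to relate consecutive thresholding errors.

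Setting $a_k := \| \mathbf u^k - \mathbf u^\ast \|_J$ and $b_k := \| \mathbf u^\ast - \boldsymbol{\mathcal S}_{\alpha_k} \mathbf u^\ast \|_J$, I would first exploit the fixed-point identity $\mathbf u^\ast = \mathbf \Phi \mathbf u^\ast$ to split
\[
\mathbf u^{k+1} - \mathbf u^\ast
= \bigl( \boldsymbol{\mathcal S}_{\alpha_k}(\mathbf \Phi \mathbf u^k) - \boldsymbol{\mathcal S}_{\alpha_k}(\mathbf \Phi \mathbf u^\ast) \bigr) + \bigl( \boldsymbol{\mathcal S}_{\alpha_k} \mathbf u^\ast - \mathbf u^\ast \bigr).
\]
The componentwise non-expansiveness of the soft thresholding operator reduces the first difference to $\| \mathbf \Phi \mathbf u^k - \mathbf \Phi \mathbf u^\ast \|_J$, which by Proposition~\ref{prop_contractivity} is bounded by $\rho\, a_k$, while the second contributes exactly $b_k$. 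This yields the one-step recursion $a_{k+1} \leq \rho\, a_k + b_k$.

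Next, to turn this recursion into the claimed bound with constant $\frac{2}{1-\rho}$, I would argue by induction on $k$. Assuming $a_k \leq \frac{2}{1-\rho}\, b_{k-1}$, Lemma~\ref{lemma:thresholds} applied with the pair of thresholds $\alpha_k \leq \alpha_{k-1} = \alpha_k/\theta$ gives $b_{k-1} \leq b_k/\theta$, so that
\[
a_{k+1} \leq \Bigl( \tfrac{2\rho}{\theta(1-\rho)} + 1 \Bigr) b_k,
\]
and a direct rearrangement shows that the prefactor is at most $\frac{2}{1-\rho}$ exactly when $\theta \geq \frac{2\rho}{1+\rho}$, a mild standing condition on the decrease factor that I would impose on the algorithm.

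The delicate point is the base case $k=0$: the initialization $\mathbf u^0 = \mathbf 0$ together with $\alpha_0 = \sigma_{\max}(u_0^\ast)$ and the singular-value preservation of $e^{it\Delta}$ on tensor factors guarantees $\mathbf u^1 = \boldsymbol{\mathcal S}_{\alpha_0}(\mathbf \Phi \mathbf 0) = \mathbf 0$, so $a_1 = \| \mathbf u^\ast \|_J$. The required bound $\| \mathbf u^\ast \|_J \leq \frac{2}{1-\rho}\, b_0$ then holds whenever $\alpha_0 \geq \sigma_{\max}(\mathbf u^\ast)$, since in that case $\boldsymbol{\mathcal S}_{\alpha_0}\mathbf u^\ast = \mathbf 0$ and $b_0$ coincides with $a_1$. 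The main obstacle is precisely establishing this singular-value comparison from the initialization rule alone; I would handle it using the isometry property of Lemma~\ref{lm:isometry} combined with the fact that, for small $h$, $\mathbf u^\ast$ remains close to the constant sequence $(e^{it_j\Delta} u_0^\ast)_j$, whose components share the singular spectrum of $u_0^\ast$.
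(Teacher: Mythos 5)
Your one-step recursion $a_{k+1} \leq \rho\, a_k + b_k$ is exactly the one the paper also derives, and your inductive closure is a valid (and arguably cleaner) alternative to the paper's approach of unrolling the recursion into a geometric series. Your explicit condition $\theta \geq \tfrac{2\rho}{1+\rho}$ is compatible with the paper's default $\theta = \sqrt\rho$, since $\sqrt\rho \geq \tfrac{2\rho}{1+\rho}$ follows from $(1-\sqrt\rho)^2 \geq 0$; the paper instead fixes $\theta = \sqrt\rho$ and sums $\sum_\ell (\rho/\theta)^\ell$, obtaining the same constant $\tfrac{2}{1-\rho}$.

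The genuine gap is in your base case, and it is worth fixing cleanly rather than hand-waving. You try to establish $\alpha_0 \geq \sigma_{\max}(\mathbf u^\ast)$ so that $\boldsymbol{\mathcal S}_{\alpha_0}\mathbf u^\ast = \mathbf 0$ and hence $b_0 = \|\mathbf u^\ast\|_J$. This is not guaranteed: the initialization sets $\alpha_0 = \sigma_{\max}(u_0^\ast)$, and while the components of $\mathbf\Phi\mathbf 0 = (e^{it_j\Delta}u_0^\ast)_j$ indeed share the singular spectrum of $u_0^\ast$, the fixed point $\mathbf u^\ast$ need not; its singular values may exceed $\alpha_0$, and neither isometry preservation nor the smallness of $h$ forces them below it. What you actually need is much weaker and follows immediately: since $\boldsymbol{\mathcal S}_{\alpha_0}\mathbf\Phi$ is a contraction with fixed point $\mathbf 0$ (precisely because $\boldsymbol{\mathcal S}_{\alpha_0}\mathbf\Phi\mathbf 0 = \mathbf 0$, which you already noted), and $\mathbf u^\ast = \mathbf\Phi\mathbf u^\ast$, you get
\[
\|\boldsymbol{\mathcal S}_{\alpha_0}\mathbf u^\ast\|_J = \|\boldsymbol{\mathcal S}_{\alpha_0}\mathbf\Phi\mathbf u^\ast - \boldsymbol{\mathcal S}_{\alpha_0}\mathbf\Phi\mathbf 0\|_J \leq \rho\,\|\mathbf u^\ast\|_J,
\]
and then
\[
\|\mathbf u^\ast\|_J \leq \|\boldsymbol{\mathcal S}_{\alpha_0}\mathbf u^\ast\|_J + \|\mathbf u^\ast - \boldsymbol{\mathcal S}_{\alpha_0}\mathbf u^\ast\|_J \leq \rho\,\|\mathbf u^\ast\|_J + b_0,
\]
so $a_0 = \|\mathbf u^\ast\|_J \leq \tfrac{1}{1-\rho}b_0$. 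Plugging this into the one-step recursion at $k=0$ gives $a_1 \leq \rho a_0 + b_0 \leq \tfrac{1}{1-\rho}b_0 \leq \tfrac{2}{1-\rho}b_0$, which is exactly the base case you need. This is the paper's argument, and it requires no assumption on $\sigma_{\max}(\mathbf u^\ast)$.
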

\begin{proof}
Since $\boldsymbol{\mathcal S}_{\alpha_0} \mathbf \Phi$ is contractive with fixed point at $\mathbf 0$, it holds that
\[
\|\mathbf u^*\|_J - \| \mathbf u^* - \boldsymbol{\mathcal S}_{\alpha_0} \mathbf u^*\|_J \leq \| \boldsymbol{\mathcal S}_{\alpha_0} \mathbf u^*\|_J = \| \boldsymbol{\mathcal S}_{\alpha_0} \mathbf \Phi \mathbf u^*\|_J \leq \rho \|\mathbf u^*\|_J
\]
and thus
\[
\| \mathbf u^0 - \mathbf u^*\|_J = \| \mathbf u^*\|_J \leq \frac{1}{1-\rho}\| \mathbf u^* - \boldsymbol{\mathcal S}_{\alpha_0} \mathbf u^*\|_J.
\]
We then proceed by induction on $k$. For $k\geq 0$, we have
\[
\| \mathbf u^{k+1} - \mathbf u^*\|_J
\leq \| \boldsymbol{\mathcal S}_{\alpha_k} \mathbf \Phi \mathbf u^{k} - \boldsymbol{\mathcal S}_{\alpha_k} \mathbf \Phi \mathbf u^* \|_J + \| \mathbf u^* - \boldsymbol{\mathcal S}_{\alpha_k} \mathbf \Phi \mathbf u^*\|_J
\leq \rho\| \mathbf u^k - \mathbf u^* \|_J + \| \mathbf u^* - \boldsymbol{\mathcal S}_{\alpha_k} \mathbf u^* \|_J.
\]
By Lemma~\ref{lemma:thresholds}, for $0\leq \ell\leq k$, it holds that
\[
\theta^\ell \| \mathbf u^* - \boldsymbol{\mathcal S}_{\alpha_{k-\ell}} \mathbf u^*\|_J \leq \| \mathbf u^* - \boldsymbol{\mathcal S}_{\alpha_{k}} \mathbf u^*\|_J.
\]
By induction, we then obtain
\begin{align*}
\| \mathbf u^{k+1} - \mathbf u^* \|_J
&\leq \sum_{\ell=0}^k \rho^\ell \| \mathbf u^* - \boldsymbol{\mathcal S}_{\alpha_{k-\ell}} \mathbf u^* \|_J + \rho^{k+1} \| \mathbf u^0 - \mathbf u^* \|_J \\
&\leq \| \mathbf u^* - \boldsymbol{\mathcal S}_{\alpha_k} \mathbf u^*\|_J \biggl( \sum_{\ell=0}^k \frac{\rho^\ell}{\theta^\ell}+ \frac{\rho^{k+1}}{\theta^k} \frac{1}{1-\rho}\biggr).
\end{align*}
Setting $\theta=\sqrt\rho$, the last constant can be bounded by
\[
\sum_{\ell=0}^{k-1} \frac{\rho^\ell}{\theta^\ell}+ \frac{\rho^{k}}{\theta^k} \left(1+\frac{\rho}{1-\rho}\right)
=\frac{1-\sqrt \rho^k}{1-\sqrt\rho}+\frac{\sqrt\rho^k}{1-\rho}\leq \frac{2}{1-\rho}. \qedhere
\]
\end{proof}

\begin{remark}
It is in fact possible to take an arbitrary $\theta\in [\rho,1)$ in the proof. For $\theta=\rho$, we achieve the optimal rate of decay of $(\alpha_k)_{k \geq 0}$, and the constant in the error bound is at most $k + \frac{1}{1-\rho}$.
For $\theta=\sqrt\rho$, the algorithm needs twice as many iterations in order to reach a desired value of the threshold, but one obtains a constant independent of the iteration number $k$.
\end{remark}

We also need the following lemma, which was originally proposed in \cite[Lemma 4.3]{BS17} and allows to compare the ranks of the scheme to the ranks of the exact solution.
\begin{lemma}
\label{lemma-rank-rank-error}
For any $u,v\in L^2(\Omega)$ and $\alpha,\beta>0$, it holds that
\[
\rank(\mathcal S_{\alpha+\beta}(u))\leq \rank(\mathcal S_{\alpha}v)+\frac{1}{\beta^2}\|u-v\|^2.
\]
\end{lemma}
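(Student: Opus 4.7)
The approach is to exploit the simple combinatorial characterization of the rank of a soft-thresholded element: for any $w\in L^2(\Omega)$ and $\gamma>0$, one has $\rank(\mathcal S_\gamma w)=|\{k\colon \sigma_k(w)>\gamma\}|$, since $\mathcal S_\gamma$ sets all singular values $\leq \gamma$ to zero. Combining this with the Mirsky-type inequality $\sum_k |\sigma_k(u)-\sigma_k(v)|^2\leq \|u-v\|^2$ (already invoked in the proof of Proposition~\ref{lm:qo_rank}) reduces the statement to a counting argument over indices where the singular values of $u$ and $v$ are on opposite sides of the thresholds.

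Concretely, set $r_u=\rank(\mathcal S_{\alpha+\beta}u)$ and $r_v=\rank(\mathcal S_\alpha v)$. If $r_u\leq r_v$, the claim is trivial, so assume $r_u>r_v$. For every index $k$ with $r_v<k\leq r_u$ we have, by the decreasing ordering of singular values, $\sigma_k(u)>\alpha+\beta$ and $\sigma_k(v)\leq \alpha$, and hence the pointwise gap $\sigma_k(u)-\sigma_k(v)>\beta$. This yields
\[
(r_u-r_v)\,\beta^2<\sum_{k=r_v+1}^{r_u}\bigl(\sigma_k(u)-\sigma_k(v)\bigr)^2\leq \sum_{k\in\N}\bigl(\sigma_k(u)-\sigma_k(v)\bigr)^2\leq \|u-v\|^2,
\]
where the last inequality is Mirsky's theorem. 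Dividing by $\beta^2$ and rearranging produces exactly the bound $r_u\leq r_v+\beta^{-2}\|u-v\|^2$.

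The main subtlety to check is the strictness of the inequalities and the handling of infinite sequences of singular values: one must ensure that $\sigma_k(u)>\alpha+\beta$ together with $\sigma_k(v)\leq \alpha$ genuinely implies $\sigma_k(u)>\sigma_k(v)$ (immediate) and that Mirsky's inequality is valid in the Hilbert-Schmidt / $L^2$ setting for possibly infinite-rank operators, which it is. No other obstacle is expected; the argument is short and purely singular-value-based, in the same spirit as \cite[Lemma 4.3]{BS17}.
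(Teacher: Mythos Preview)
Your argument is correct and is essentially the standard proof of this statement (as in \cite[Lemma 4.3]{BS17}); note that the paper itself does not supply a proof but simply cites that reference. The only point worth making explicit is that $r_u$ and $r_v$ are automatically finite since $u,v\in L^2(\Omega)$ implies $\sum_k\sigma_k^2<\infty$, so only finitely many singular values can exceed a positive threshold.
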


With this, we obtain the quasi-optimality of the ranks of the iterates $\mathbf u^k$.
\begin{lemma}
\label{lemmaboundingtherankofuk}
For $\eps \geq 4 \, \frac{1+ \rho}{1-\rho} \, \| \mathbf u - \mathbf u^*\|_J$, let $\bar \alpha$ be the value of the soft threshold such that
\[
\| u - \mathcal S_{\bar \alpha} u \|_{\mathcal W(0,h)} = \frac{1-\rho}{1 + \rho}\, \frac{\eps}{4}.
\]
Then for $0\leq k\leq K$, 
\begin{equation}
	\rank (\mathbf u^{k}) := \max_{1 \leq j \leq J} \rank(u^k_j) \leq \bar r:=\frac{\eps^2}{\rho \bar \alpha^2}
	=\frac{16}{\rho}\frac{(1+\rho)^2}{(1-\rho)^2}\,\frac{\|  u - \mathcal S_{\bar \alpha} u \|_{\mathcal W(0,h)}^2}{\bar \alpha^2}.
\label{eq:r_bar}
\end{equation}
\end{lemma}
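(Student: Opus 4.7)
The plan is to apply Lemma~\ref{lemma-rank-rank-error} to the identity $u^k_j = \mathcal S_{\alpha_{k-1}}(\Phi \mathbf u^{k-1})_j$, taking as reference the hard-thresholded exact Schr\"odinger solution $v = \mathcal H_{\bar\alpha}(u(t_j))$. This choice is crucial: by Proposition~\ref{lm:qo_rank}, one has $\rank(v) \leq \|u(t_j) - \mathcal S_{\bar\alpha} u(t_j)\|^2/\bar\alpha^2 \leq \|u - \mathcal S_{\bar\alpha} u\|_{\mathcal W(0,h)}^2/\bar\alpha^2$, and since $\rank(\mathcal S_\alpha v) \leq \rank(v)$ for any $\alpha > 0$, the ``reference rank'' term in Lemma~\ref{lemma-rank-rank-error} is uniformly controlled regardless of the magnitude of $\alpha_{k-1}$. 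The base case $k = 0$ is trivial since $\mathbf u^0 = \mathbf 0$.

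For $1 \leq k \leq K$, I would split $\alpha_{k-1} = \alpha + \beta$ with $\beta = \sqrt{\rho}\,\alpha_{k-1}$ and $\alpha = (1-\sqrt{\rho})\alpha_{k-1}$, which yields
\begin{equation*}
\rank(u^k_j) \leq \frac{\|u - \mathcal S_{\bar\alpha} u\|_{\mathcal W(0,h)}^2}{\bar\alpha^2} + \frac{\|(\Phi \mathbf u^{k-1})_j - \mathcal H_{\bar\alpha} u(t_j)\|^2}{\rho\,\alpha_{k-1}^2}.
\end{equation*}
The first summand is already of the announced form, contributing the factor $16(1+\rho)^2/(1-\rho)^2$ in $\bar r$ after substituting $\|u - \mathcal S_{\bar\alpha} u\|_{\mathcal W(0,h)} = \frac{(1-\rho)\eps}{4(1+\rho)}$. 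For the numerator of the second summand, the triangle inequality combined with the contractivity from Proposition~\ref{prop_contractivity} and the fixed-point identity $\mathbf\Phi \mathbf u^* = \mathbf u^*$ gives
\[
\|(\Phi \mathbf u^{k-1})_j - \mathcal H_{\bar\alpha} u(t_j)\| \leq \|u - \mathcal H_{\bar\alpha} u\|_{\mathcal W(0,h)} + \|\mathbf u - \mathbf u^*\|_J + \rho\,\|\mathbf u^{k-1} - \mathbf u^*\|_J,
\]
whose first two terms are each bounded by $\frac{(1-\rho)\eps}{4(1+\rho)}$ through the definition of $\bar\alpha$ and the hypothesis on $\eps$, respectively.

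The third term is handled via Lemma~\ref{lem_rank_uk} applied at step $k-1$, giving $\|\mathbf u^{k-1}-\mathbf u^*\|_J \leq \frac{2}{1-\rho}\|\mathbf u^* - \boldsymbol{\mathcal S}_{\alpha_{k-2}}\mathbf u^*\|_J$ for $k \geq 2$ (while for $k=1$ the initialization $\alpha_0 = \sigma_{\max}(u_0^*)$ ensures $\mathbf u^1 = \mathbf 0$). The non-expansiveness of $\mathcal I - \boldsymbol{\mathcal S}_{\alpha_{k-2}}$ combined with Lemma~\ref{lemma:thresholds} then bounds this quantity by a constant multiple of $\max(1, \alpha_{k-2}/\bar\alpha)\,\|u - \mathcal S_{\bar\alpha} u\|_{\mathcal W(0,h)}$ plus a multiple of $\|\mathbf u - \mathbf u^*\|_J$. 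Since the update rule gives $\alpha_{k-2} = \alpha_{k-1}/\sqrt{\rho}$, the resulting bound scales like $\alpha_{k-1}\eps/(\sqrt\rho\,\bar\alpha)$ up to constants depending on $\rho$, and squaring and dividing by $\rho\,\alpha_{k-1}^2$ produces exactly a term of order $\eps^2/(\rho\bar\alpha^2)$ that matches $\bar r$.

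The main obstacle lies in the bookkeeping of the multiplicative constants, in particular distinguishing the regimes $\alpha_{k-2} \geq \bar\alpha$ and $\alpha_{k-2} \leq \bar\alpha$ in the distance estimate: in the first regime Lemma~\ref{lemma:thresholds} provides the crucial $\alpha_{k-2}/\bar\alpha$ factor, while in the second one the monotonicity of $\alpha\mapsto\|u - \mathcal S_\alpha u\|$ replaces this ratio by $1$. The particular choice $\beta^2 = \rho\,\alpha_{k-1}^2$ is calibrated so that the factor $1/\rho$ in the definition of $\bar r$ appears naturally as a consequence of the geometric decay $\alpha_{k} = \sqrt\rho\,\alpha_{k-1}$ combined with the Lipschitz-type estimate of Lemma~\ref{lem_rank_uk}.
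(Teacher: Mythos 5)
Your overall strategy is the same as the paper's (apply Lemma~\ref{lemma-rank-rank-error}, unfold $\Phi\mathbf u^{k-1}$ via contractivity and Lemma~\ref{lem_rank_uk}, compare to the exact solution), with two genuine but cosmetic differences: you take the reference $v=\mathcal H_{\bar\alpha}u(t_j)$ instead of $u(t_j)$, which is a nice idea that gives a uniform control on the reference rank; and you split $\alpha_{k-1}=\alpha+\beta$ with $\beta=\sqrt\rho\,\alpha_{k-1}$ instead of the paper's $\alpha=\beta=\alpha_{k+1}/2$.

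However, the argument has a real gap: nowhere do you establish a \emph{lower bound} $\alpha_{k-1}\gtrsim\bar\alpha$. Your rank bound has the factor $1/(\rho\alpha_{k-1}^2)$ in front of the square of terms like $\|u-\mathcal H_{\bar\alpha}u\|_{\mathcal W(0,h)}$ and $\|\mathbf u-\mathbf u^*\|_J$, which are each of the fixed size $\frac{(1-\rho)\eps}{4(1+\rho)}$ and do not shrink with $\alpha_{k-1}$. Thus if $\alpha_{k-1}$ were much smaller than $\bar\alpha$, the second summand would be $\gg\eps^2/(\rho\bar\alpha^2)=\bar r$ and the claimed bound would fail. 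Your ``regime distinction'' $\alpha_{k-2}\gtrless\bar\alpha$ only addresses the soft-thresholding comparison through Lemma~\ref{lemma:thresholds}; it does not cap the factor $1/\alpha_{k-1}^2$. The paper closes this hole by using the algorithm's stopping criterion: for $k+1<K$ it gives $\eps/(1+\rho)\leq\|\mathbf u^{k+1}-\mathbf u^*\|_J$, which combined with Lemma~\ref{lem_rank_uk}, non-expansiveness of $\mathcal I-\mathcal S_{\alpha_k}$, and the hypothesis on $\eps$ yields $\|u-\mathcal S_{\alpha_k}u\|_{\mathcal W(0,h)}\geq\|u-\mathcal S_{\bar\alpha}u\|_{\mathcal W(0,h)}$, hence $\alpha_k\geq\bar\alpha$. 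That is the missing ingredient.

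There is also a secondary mismatch: even granting $\alpha_j\geq\bar\alpha$ for $j\leq K-2$, your split divides by $\rho\alpha_{k-1}^2$, so for $k=K$ one only has $\alpha_{K-1}=\sqrt\rho\,\alpha_{K-2}\geq\sqrt\rho\,\bar\alpha$, which is a factor $\sqrt\rho$ too small. Tracking the constants, the second summand for $k=K$ can then be as large as $\frac{1}{4\rho}\bar r$, exceeding $\bar r$ whenever $\rho<1/4$. The paper avoids this by keeping the division factor as $\alpha_{k+1}^2=\rho\alpha_k^2$ and applying the lower bound to $\alpha_k$ rather than $\alpha_{k+1}$, so that one power of $\alpha$ remains at an index for which the stopping criterion applies. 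To repair your proof you would need both to invoke the stopping criterion and to shift the index of the threshold you lower-bound by one (e.g., by writing $\rho\alpha_{k-1}^2=\rho^2\alpha_{k-2}^2$ and using $\alpha_{k-2}\geq\bar\alpha$), which then also forces you to absorb the extra $1/\rho$ into the constant --- as the paper's $\bar r$ already does via $\bar r=\eps^2/(\rho\bar\alpha^2)$, but not with an additional spare factor $1/\rho$.
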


\begin{proof}
	The value of $\bar{\alpha}(t)$ is well-defined since the soft thresholding error is continuous and monotonic.
The ranks of the first two iterates $\mathbf u^0$ and $\mathbf u^1$ are simply zero. For the remaining iterations, we let $0\leq k \leq K-2$ and apply Lemma~\ref{lemma-rank-rank-error} to obtain
\[
\rank (\mathbf u^{k+2}) = \rank (\boldsymbol{\mathcal S}_{\alpha_{k+1}} \mathbf \Phi \mathbf u^{k+1}) \leq \rank ( \boldsymbol{\mathcal S}_{\alpha_{k+1}/2} \mathbf u) + \frac{4}{\alpha_{k+1}^2} \| \mathbf \Phi \mathbf u^{k+1} - \mathbf u \|_J^2.
\]
The first term is controlled by
\begin{equation}
\label{rankofSalphakplusoneovertwo}
\rank (\boldsymbol{\mathcal S}_{\alpha_{k+1}/2} \mathbf u) \leq \frac{4}{\alpha_{k+1}^2} \| \mathbf u - \boldsymbol{\mathcal S}_{\alpha_{k+1}/2} \mathbf u \|_J^2 \leq \frac{4}{\alpha_{k+1}^2} \| \mathbf u - \boldsymbol{\mathcal S}_{\alpha_k} \mathbf u \|_J^2.
\end{equation}
For the second term, notice that
\[
\| \mathbf \Phi \mathbf u^{k+1} - \mathbf u \|_J \leq \| \mathbf \Phi \mathbf u^{k+1} - \mathbf u^* \|_J + \| \mathbf u^* - \mathbf u \|_J \leq \rho \| \mathbf u^{k+1} - \mathbf u^* \|_J + \| \mathbf u^* - \mathbf u \|_J.
\]
Using Lemma~\ref{lem_rank_uk} and the non-expansivity of $\mathcal I-\mathcal S_{\alpha_k}$, we obtain that
\[
\| \mathbf u^{k+1} - \mathbf u^* \|_J \leq \frac{2}{1-\rho} \| \mathbf u^* - \boldsymbol{\mathcal S}_{\alpha_k} \mathbf u^*\|_J \leq \frac{2}{1-\rho} \left(\| \mathbf u - \boldsymbol{\mathcal S}_{\alpha_k} \mathbf u \|_J +\| \mathbf u^* - \mathbf u \|_J\right),
\]
which yields
\begin{equation}
\label{normofPhiukplusoneminusu}
\begin{aligned}
\| \mathbf \Phi \mathbf u^{k+1} - \mathbf u\|_J^2
&\leq \biggl(\frac{2\rho}{1-\rho}\| \mathbf u - \boldsymbol{\mathcal S}_{\alpha_k} \mathbf u \|_J + \frac{1+\rho}{1-\rho} \| \mathbf u^* - \mathbf u \|_J\biggr)^2 \\
&\leq \frac{8\rho^2}{(1-\rho)^2}\| \mathbf u - \boldsymbol{\mathcal S}_{\alpha_k} \mathbf u \|_J^2 + 2\,\frac{(1+\rho)^2}{(1-\rho)^2} \| \mathbf u^* - \mathbf u \|_J^2,
\end{aligned}
\end{equation}
where we used the inequality $(a+b)^2\leq 2a^2+2b^2$ to obtain the second line.

Combining equations \eqref{rankofSalphakplusoneovertwo} and \eqref{normofPhiukplusoneminusu}, we get
\begin{equation}
\label{boundbyatriplefraction}
\begin{aligned}
\rank ( \mathbf u^{k+2})
&\leq \frac{4}{\alpha_{k+1}^2} \biggl( \frac{(1-\rho)^2+8\rho^2}{(1-\rho)^2}\| \mathbf u - \boldsymbol{\mathcal S}_{\alpha_k} \mathbf u \|_J^2 + 2\,\frac{(1+\rho)^2}{(1-\rho)^2} \| \mathbf u^* - \mathbf u \|_J^2\biggr)\\
&\leq \frac{8}{\rho}\,\frac{(1+\rho)^2}{(1-\rho)^2}\,\frac{\| \mathbf u - \boldsymbol{\mathcal S}_{\alpha_k} \mathbf u\|_J^2 + \| \mathbf u^* - \mathbf u \|_J^2}{\alpha_k^2},
\end{aligned}
\end{equation}
where the second line comes from the fact that $\alpha_{k+1}^2=\rho \alpha_k^2$, together with the inequality $(1-\rho)^2+8\rho^2\leq 2(1+\rho)^2$, which is true since $\rho^2\leq \rho\leq 1$.

It remains to replace $\alpha_k$ by $\bar \alpha$. As $k+1<K$, the stopping criterion implies that
\[
\frac{\eps}{1 + \rho} \leq \| \mathbf u^{k+1} - \mathbf u^*\|_J \leq \frac{2}{1-\rho} \| \mathbf u^* - \boldsymbol{\mathcal S}_{\alpha_k} \mathbf u^* \|_J,
\]
where we again make use of Lemma~\ref{lem_rank_uk}. Using the condition on $\eps$ and the non-expansivity of $\mathcal I - \mathcal S_{\alpha_k}$, we obtain that
\begin{align*}
\| u - \mathcal S_{\alpha_k} u \|_{\mathcal W(0,h)} &\geq \| \mathbf u - \boldsymbol{\mathcal S}_{\alpha_k} \mathbf u \|_J \geq \| \mathbf u^* - \boldsymbol{\mathcal S}_{\alpha_k} \mathbf u^* \|_J - \| \mathbf u - \mathbf u^* \|_J \\ &\geq \frac{1-\rho}{1 + \rho}\, \frac{\eps}{2} - \| \mathbf u - \mathbf u^* \|_J \geq \frac{1-\rho}{1 + \rho} \, \frac{\eps}{4} = \| u - \mathcal S_{\bar \alpha} u \|_{\mathcal W(0,h)}.
\end{align*}
This shows that $\alpha_k\geq \bar \alpha$, thus
\[
\frac{\| \mathbf u - \boldsymbol{\mathcal S}_{\alpha_k} \mathbf u \|_J^2}{\alpha_{k}^2}\leq \frac{\| \mathbf u - \boldsymbol{\mathcal S}_{\bar \alpha} \mathbf u \|_J^2}{\bar \alpha^2} \quad \text{and} \quad \frac{\| \mathbf u^* - \mathbf u\|_J^2}{\alpha_k^2} \leq \frac{\| \mathbf u^* - \mathbf u\|_J^2}{\bar \alpha^2}.
\]
As a consequence, we can replace $\alpha_k$ by $\bar \alpha$ in equation~\eqref{boundbyatriplefraction}, and conclude by observing that
\[
\| \mathbf u^* - \mathbf u\|_J \leq \frac{1-\rho}{1 + \rho} \, \frac{\eps}{4} = \| u - \mathcal S_{\bar \alpha} u\|_{\mathcal W(0,h)} \quad \text{and} \quad \| \mathbf u - \boldsymbol{ \mathcal S}_{\bar \alpha} \mathbf u \|_J \leq \| u - \mathcal S_{\bar{\alpha}} u \|_{\mathcal W(0,h)}. \qedhere
\]
\end{proof}

As discussed in Section \ref{sec:quasiopt}, the quantity $\|  u - \mathcal S_{\bar \alpha} u \|_{\mathcal W (0,h)}^2/\bar \alpha^2$ can indeed be considered to describe a quasi-optimal rank for the corresponding error. Note that the factor $1/ \rho$ in the bound \eqref{eq:r_bar} can be improved by choosing  $\theta$ closer to $1$.

We proceed in the derivation of a local rank bound by considering how the application of the fixed-point operator modifies the rank of the current iterate.

\begin{lemma}
\label{lemmarankofPhiuk}
For $0\leq k\leq K$ the ranks of the fixed-point iteration $\mathbf \Phi$ applied to the current iterate $\mathbf u^k$ can be bounded in terms of
\[
\rank(\mathbf \Phi \mathbf u^k) \leq \rank(u^\ast_0)+J \rank(V) \rank(\mathbf u^k).
\]
\end{lemma}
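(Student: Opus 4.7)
The plan is to estimate the rank of each component $(\mathbf{\Phi}\mathbf{u}^k)_j$ using the explicit formula \eqref{eq_untwisted_phi}, then take the maximum over $j$. The argument rests on three basic rank inequalities: (i) the exponential $e^{i\tau\Delta}$ preserves rank because of the Kronecker factorization \eqref{eq:expfactors}, which gives $e^{i\tau\Delta}(u_1\otimes u_2)=e^{i\tau\Delta_1}u_1\otimes e^{i\tau\Delta_2}u_2$; (ii) pointwise multiplication by a rank-$R$ function $V=\sum_\ell V_1^{(\ell)}\otimes V_2^{(\ell)}$ is at most rank-multiplicative, i.e.\ $\rank(Vw)\leq \rank(V)\rank(w)$ for any $w\in L^2(\Omega)$; and (iii) the rank of a finite sum is subadditive.

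First I would fix $j\in\{1,\dots,J\}$ and bound each summand inside \eqref{eq_untwisted_phi}. Writing $r=\rank(\mathbf u^k)\geq \rank(u_m^k)$ for all $m$, inequality (ii) gives $\rank(Vu_m^k)\leq \rank(V)\cdot r$, and inequality (i) shows that applying $e^{i(t_j-t_m)\Delta}$ does not increase this rank, so each of the $J$ summands has rank at most $\rank(V)\,r$. By (iii), the sum $\sum_{m=1}^J \omega_{j,m}\,e^{i(t_j-t_m)\Delta}Vu_m^k$ therefore has rank at most $J\rank(V)\,r$. For the initial term, (i) again yields $\rank(e^{it_j\Delta}u_0^*)=\rank(u_0^*)$. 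Adding the two contributions gives $\rank((\mathbf{\Phi}\mathbf u^k)_j)\leq \rank(u_0^*)+J\rank(V)\rank(\mathbf u^k)$.

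Since the bound is uniform in $j$, taking the maximum over $j=1,\dots,J$ yields the claim. There is no real obstacle here; the only thing to be careful about is that the estimate for $\rank(Vu_m^k)$ uses the rank of the function $V$ as specified in the assumption preceding the lemma, not the (possibly infinite) rank of the multiplication operator $w\mapsto Vw$. This is precisely why the finite-rank representation $V=\sum_{\ell=1}^{\rank(V)} V_1^{(\ell)}\otimes V_2^{(\ell)}$ was introduced: it allows us to write $Vu_m^k$ as a sum of at most $\rank(V)\rank(u_m^k)$ elementary tensor products, and the exponential then acts separately on each factor without changing this count.
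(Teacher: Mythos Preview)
Your argument is correct and follows essentially the same approach as the paper, which simply invokes the subadditivity of rank under sums and the bound $\rank(fg)\leq\rank(f)\rank(g)$ together with the definition of $\mathbf\Phi$. You have just spelled out the details (in particular the rank preservation by $e^{i\tau\Delta}$ via \eqref{eq:expfactors}) that the paper leaves implicit.
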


\begin{proof}
It suffices to use the estimates $\rank(f+g)\leq \rank(f)+\rank(g)$ and $\rank(fg)\leq \rank(f)\rank(g)$ together with the definition of the fixed-point formulation.
\end{proof}

Finally, we need an estimate for the ranks after the final recompression step carried out by applying $\mathcal R_\delta$ as defined in \eqref{eq:recompression}. This estimate is illustrated in Figure~\ref{fig:tikzofrecompressionranks}.

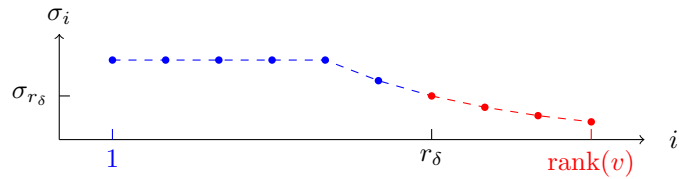
\begin{figure}[h]
	\centering
	\begin{tikzpicture}[scale=0.7, font=\small]
		\draw[->] (0,0) -- (11,0) node [pos=1.05] {$i$};
		\draw[->] (0,0) -- (0,2) node[anchor=south] {$\sigma_i$};
		
		\pgfmathsetmacro{\valThird}{5*exp(-0.3*(3-1))}
		\pgfmathsetmacro{\valFourth}{5*exp(-0.3*(4-1))}
		\pgfmathsetmacro{\valFifth}{5*exp(-0.3*(5-1))}
		\pgfmathsetmacro{\vSix}{5*exp(-0.3*(6-1))}
		\pgfmathsetmacro{\vSeven}{5*exp(-0.3*(7-1))}
		\pgfmathsetmacro{\vEight}{5*exp(-0.3*(8-1))}
		\pgfmathsetmacro{\vNine}{5*exp(-0.3*(9-1))}
		\pgfmathsetmacro{\vTen}{5*exp(-0.3*(10-1))}
		
		\foreach \i in {1,2,3,4,5} {
			\fill[blue] (\i,\valFifth) circle (2pt);
		}
		\draw[blue, dashed]
		(1,\valFifth) -- (5,\valFifth) -- (6,\vSix) -- (7,\vSeven);
		\draw[red, dashed]
		(7,\vSeven)-- (8,\vEight) -- (9,\vNine) -- (10,\vTen);

		\pgfmathsetmacro{\val}{5*exp(-0.3*(6-1))}
		\fill[blue] (6,\val) circle (2pt);
		
		\foreach \i in {7,8,9,10} {
			\pgfmathsetmacro{\val}{5*exp(-0.3*(\i-1))}
			\fill[red] (\i,\val) circle (2pt);
		}
		
		\draw (0,\vSeven) -- (0.2,\vSeven); 
		\node[left] at (0,\vSeven) {$\sigma_{r_\delta}$};
		
		\draw[blue] (1,0) -- (1,0.2); 
		\node[below, blue] at (1,0) {$1$};
		
		\draw (7,0) -- (7,0.2); 
		\node[below] at (7,0) {$r_\delta$};
		
		\draw[red] (10,0) -- (10,0.2); 
		\node[below, red] at (10,0) {$\rank(v)$};
	\end{tikzpicture}
	\caption{Visualization of Lemma \ref{lemmarecompressionranks}: the squares of the red singular values sum up to at least $\delta^2$, and the blue ones to at most $\|v\|^2-\delta^2$, thus $r_\delta$ cannot be too close to $\rank(v)$.}
\label{fig:tikzofrecompressionranks}
\end{figure}

\begin{lemma}
\label{lemmarecompressionranks}
For any $v\in L^2(\Omega)$ of finite rank, it holds that
\[
r_\delta=\rank(\mathcal R_\delta v) \leq 1 + \frac{\| v \|^2-\delta^2}{\|v\|^2}\rank(v).
\]
\end{lemma}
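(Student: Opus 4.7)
The plan is to pinch the critical singular value $\sigma_{r_\delta}$ between two estimates, one coming from the monotonicity of the singular value sequence and the other from the minimality built into the definition \eqref{eq:recompression} of $r_\delta$.

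First I would dispose of the trivial case $r_\delta=0$, which happens precisely when $\|v\|^2\leq\delta^2$ and makes the claim automatic since $\rank(v)\geq 0$. For the main case $r_\delta\geq 1$, let $r=r_\delta$ and $R=\rank(v)$. The minimality of $r$ means that $r-1$ fails the defining inequality, so $\sum_{k\geq r}\sigma_k^2>\delta^2$, equivalently $\sum_{k=1}^{r-1}\sigma_k^2<\|v\|^2-\delta^2$. Combining this with the monotonicity $\sigma_1\geq\dots\geq\sigma_R$ to bracket $\sigma_r^2$ on both sides yields
\[
(r-1)\sigma_r^2\;\leq\;\sum_{k=1}^{r-1}\sigma_k^2\;<\;\|v\|^2-\delta^2
\qquad\text{and}\qquad
(R-r+1)\sigma_r^2\;\geq\;\sum_{k=r}^R\sigma_k^2\;>\;\delta^2.
\]

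Eliminating $\sigma_r^2$ is then just algebra: chaining the lower bound $\sigma_r^2>\delta^2/(R-r+1)$ from the right estimate into the left estimate gives $(r-1)\delta^2<(R-r+1)(\|v\|^2-\delta^2)$, and after expanding and canceling the $\delta^2$ terms this collapses cleanly to $R\delta^2<(R-r+1)\|v\|^2$, that is,
\[
r<1+\Bigl(1-\tfrac{\delta^2}{\|v\|^2}\Bigr)R=1+\frac{\|v\|^2-\delta^2}{\|v\|^2}R,
\]
which is even slightly stronger than the bound stated in the lemma.

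There is no real obstacle here; once one writes down the two monotonicity-based pinches around $\sigma_r$, the rest is a one-line elimination. The only point requiring a moment's care is the edge case $r=1$, where the left pinch degenerates to $0<\|v\|^2-\delta^2$. This is consistent with the rest of the argument because $r_\delta\geq 1$ is itself equivalent to $\|v\|^2>\delta^2$ directly from \eqref{eq:recompression}, so the strict inequalities above are preserved throughout.
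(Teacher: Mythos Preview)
Your proof is correct and follows essentially the same approach as the paper: both arguments use the minimality of $r_\delta$ to obtain $\sum_{k\geq r_\delta}\sigma_k^2>\delta^2$ and then apply the two monotonicity pinches $(r_\delta-1)\sigma_{r_\delta}^2\leq\sum_{k<r_\delta}\sigma_k^2$ and $(R-r_\delta+1)\sigma_{r_\delta}^2\geq\sum_{k\geq r_\delta}\sigma_k^2$, differing only in the final algebraic rearrangement (the paper forms the ratio $\rank(v)/(r_\delta-1)$ directly). Your treatment of the edge cases $r_\delta=0$ and $r_\delta=1$ is in fact more careful than the paper's, which tacitly assumes $r_\delta\geq 2$ when dividing by $r_\delta-1$.
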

\begin{proof}
Recall the definitions from \eqref{eq:recompression}:
\[
\mathcal{R}_\delta(v) =  \sum_{1 \leq k \leq r_{\delta}}  \sigma_k\, v^{(1)}_k \otimes v^{(2)}_k, \quad \text{where} \quad r_\delta = \min \Bigl\{ r \in \N_0 \colon  \sum_{k > r} \sigma_k^2 \leq \delta^2 \Bigr\}.
\]
By the optimality of $r_\delta$, we know that
\[
\delta^2<\sum_{k\geq r_\delta}\sigma_k^2,\quad\text{and}\quad \|v\|^2=\sum_{k=1}^{\rank(v)}\sigma_k^2.
\]
Therefore, as the singular values are non-increasing,
\[
\frac{\|v\|^2}{\|v\|^2-\delta^2}<\frac{\|v\|^2}{\|v\|^2-\sum_{k\geq r_\delta}\sigma_k^2}=1+\frac{\sum_{k\geq r_\delta}\sigma_k^2}{\sum_{k< r_\delta}\sigma_k^2}
\leq 1+\frac{(\rank(v)-r_\delta+1)\sigma_{r_\delta}^2}{(r_\delta-1)\sigma_{r_\delta}^2}=\frac{\rank(v)}{r_\delta-1}.\qedhere
\]
\end{proof}

In order to apply Lemma~\ref{lemmarecompressionranks}, we need $v$ to be of bounded rank and have a small $L^2(\Omega)$ norm. Instead of the natural choice $v=\Phi \mathbf u^K(t)$, we will take $v=\Phi \mathbf u^K(h)-\mathcal S_{\tilde \alpha}\Phi \mathbf u^K(h)$, which has smaller norm, but the same singular values below the threshold $\tilde \alpha$.
\begin{lemma}
\label{lemmarankofutilde}
For $\eps \geq 4\frac{1+\rho}{1-\rho} \| u - u^*\|_{\mathcal W(0,h)}$, the rank of
$\tilde u(h)=\mathcal R_\delta \Phi \mathbf u^K(h)$ can be bounded by
\[
\rank(\tilde u(h))\leq \max\biggl\{\frac{\bar r\rho}{2(1-\rho)^2},1+\frac{\eps^2-\delta^2(1-\rho)^2}{\eps^2}\rank(\Phi \mathbf u^K(h))\biggr\},
\]
where $\bar{r}$ is defined as in \eqref{eq:r_bar}.
\end{lemma}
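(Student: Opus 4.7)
The plan is to introduce an auxiliary threshold $\tilde\alpha=\sqrt 2\,\bar\alpha$ and to decompose $v=\Phi\mathbf u^K(h)$ through soft thresholding as $v=\mathcal S_{\tilde\alpha}v+w$, where $w=v-\mathcal S_{\tilde\alpha}v$ and $r^*=\rank(\mathcal S_{\tilde\alpha}v)$ counts the singular values of $v$ exceeding $\tilde\alpha$. The pivotal observation I would then establish is
\[
\rank(\mathcal R_\delta v)\leq \max\bigl\{\,r^*,\;\rank(\mathcal R_\delta w)\,\bigr\}.
\]
This follows from a case distinction: if $\sum_{k>r^*}\sigma_k(v)^2\leq \delta^2$ then $\mathcal H_{\tilde\alpha}v$ is already a rank-$r^*$ approximation of $v$ within tolerance $\delta$, hence $\rank(\mathcal R_\delta v)\leq r^*$; otherwise the singular values of $v$ and of $w$ agree at all indices $>r^*$, so the cut-off imposed by $\delta$ falls at the same index for both sequences, yielding $\rank(\mathcal R_\delta v)=\rank(\mathcal R_\delta w)$.

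I would then bound the two terms of the max, each by one of the entries appearing in the statement. Since $\tilde\alpha>0$ preserves positivity of singular values, $\rank(w)=\rank(v)=\rank(\Phi\mathbf u^K(h))$, and Lemma~\ref{lemmarecompressionranks} applied to $w$ gives
\[
\rank(\mathcal R_\delta w)\leq 1+\Bigl(1-\tfrac{\delta^2}{\|w\|^2}\Bigr)\rank(\Phi\mathbf u^K(h)),
\]
which matches the second entry of the max as soon as $\|w\|\leq\eps/(1-\rho)$. For the first term, the identity $\|w\|^2=r^*\tilde\alpha^2+\|v-\mathcal H_{\tilde\alpha}v\|^2$ delivers $r^*\leq\|w\|^2/\tilde\alpha^2=\|w\|^2/(2\bar\alpha^2)$, and combining \eqref{eq:r_bar} with $\|u-\mathcal S_{\bar\alpha}u\|_{\mathcal W(0,h)}=\tfrac{1-\rho}{4(1+\rho)}\eps$ yields the identity $\bar r\rho/(2(1-\rho)^2)=\eps^2/(2(1-\rho)^2\bar\alpha^2)$; the same bound $\|w\|\leq\eps/(1-\rho)$ then gives $r^*\leq \bar r\rho/(2(1-\rho)^2)$.

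The argument thus reduces to proving $\|w\|=\|v-\mathcal S_{\sqrt 2\,\bar\alpha}v\|\leq\eps/(1-\rho)$. I would obtain it from the triangle inequality and the non-expansivity of $\mathcal I-\mathcal S_{\sqrt 2\,\bar\alpha}$,
\[
\|v-\mathcal S_{\sqrt 2\,\bar\alpha}v\|\leq\|v-u(h)\|+\|u(h)-\mathcal S_{\sqrt 2\,\bar\alpha}u(h)\|.
\]
For the first summand, contractivity of $\Phi$ (Proposition~\ref{prop_contractivity}) combined with the stopping criterion $\|\mathbf u^K-\mathbf\Phi\mathbf u^K\|_J\leq\eps$ yields $\|\mathbf u^K-\mathbf u^*\|_J\leq\eps/(1-\rho)$ and hence $\|v-u^*(h)\|\leq\rho\eps/(1-\rho)$, while the standing hypothesis on $\eps$ gives $\|u^*(h)-u(h)\|\leq(1-\rho)\eps/(4(1+\rho))$. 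For the second summand, Lemma~\ref{lemma:thresholds} and the definition of $\bar\alpha$ yield $\|u(h)-\mathcal S_{\sqrt 2\,\bar\alpha}u(h)\|\leq\sqrt 2\,(1-\rho)\eps/(4(1+\rho))$. Summing these three contributions, the desired bound reduces to the elementary inequality
\[
\rho+\frac{(1+\sqrt 2)(1-\rho)^2}{4(1+\rho)}\leq 1,
\]
which rearranges into the quadratic $(5+\sqrt 2)\rho^2-2(1+\sqrt 2)\rho+(\sqrt 2-3)\leq 0$; its roots are $\rho=1$ and a negative value, so it holds on the entire contractivity regime $\rho\in[0,1)$.

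The main obstacle I expect is the calibration of the intermediate threshold $\tilde\alpha$: it must be large enough that $\|w\|^2/\tilde\alpha^2$ recovers the first entry of the max, yet small enough that $\|w\|$ itself remains bounded by $\eps/(1-\rho)$ so as to deliver the second. The choice $\tilde\alpha=\sqrt 2\,\bar\alpha$ is precisely the one that reconciles both requirements uniformly in $\rho\in[0,1)$, and it is also this scaling factor $\sqrt 2$ that accounts for the constant $\tfrac12$ appearing in $\bar r\rho/(2(1-\rho)^2)$.
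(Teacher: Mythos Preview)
Your proof is correct and follows the same overall architecture as the paper's: introduce an auxiliary threshold $\tilde\alpha$, let $w=v-\mathcal S_{\tilde\alpha}v$, perform the case distinction on whether the recompression cut-off lies above or below the threshold level, and in the second case apply Lemma~\ref{lemmarecompressionranks} to $w$.

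The difference lies in the calibration of $\tilde\alpha$ and the handling of the first branch. The paper defines $\tilde\alpha$ \emph{implicitly} by the equation $\|w\|=\eps/(1-\rho)$, and then proves $\tilde\alpha\geq 3\bar\alpha$; with this margin, the bound on $r^*=\rank(\mathcal S_{\tilde\alpha}v)$ is obtained via Lemma~\ref{lemma-rank-rank-error} (splitting $3\bar\alpha=\bar\alpha+2\bar\alpha$ and comparing to $u(h)$). You instead fix $\tilde\alpha=\sqrt2\,\bar\alpha$ \emph{explicitly}, prove the inequality $\|w\|\leq\eps/(1-\rho)$, and bound $r^*$ directly from $r^*\tilde\alpha^2\leq\|w\|^2$, bypassing Lemma~\ref{lemma-rank-rank-error} entirely. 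Your route is a little more self-contained (one fewer lemma invoked) at the price of having to verify the explicit quadratic inequality in $\rho$ at the end; the paper's implicit definition makes $\|w\|=\eps/(1-\rho)$ automatic and shifts the work to establishing $\tilde\alpha\geq 3\bar\alpha$. Both trade-offs are clean, and the two arguments are essentially dual to one another.
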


\begin{proof}
Let $\tilde \alpha$ be the threshold value such that
\[
\|\Phi \mathbf u^K(h)-\mathcal S_{\tilde \alpha}\Phi \mathbf u^K(h) \|=\frac{\eps}{1-\rho}.
\]
As
\[
\| \Phi \mathbf u^K(h) - u(h) \| \leq \frac{\rho\,\eps}{1-\rho}+\| u(h) - u^*(h)\| \leq \eps\left(\frac{\rho}{1-\rho}+\frac{1-\rho}{4(1+\rho)}\right),
\]
the non-expansiveness of $\mathcal I - \mathcal S_{\tilde \alpha}$ ensures that
\[
\|u(h)-\mathcal S_{\tilde \alpha}u(h)\|\geq \eps\left(\frac{1}{1-\rho}-\frac{\rho}{1-\rho}-\frac{1-\rho}{4(1+\rho)}\right)\geq \frac{3}{4}\,\frac{1-\rho}{1+\rho}\,\eps\geq 3\|u(h)-\mathcal S_{\bar \alpha}u(h)\|.
\]
This implies that $\tilde \alpha\geq 3\bar \alpha$ and thus
\[
\begin{aligned}
\rank(\mathcal S_{\tilde \alpha}\Phi \mathbf u^K(h)) &\leq \rank(\mathcal S_{3\bar \alpha}\Phi \mathbf u^K(h))  \\ &\leq \rank(\mathcal S_{\bar \alpha}u(h))+\frac{\|\Phi \mathbf u^K(h)-u(h)\|^2}{4\bar \alpha^2}\\
&\leq \frac{\| u - \mathcal S_{\bar \alpha} u \|_{\mathcal W(0,h)}^2}{\bar \alpha^2}+\frac{\eps^2}{4\bar \alpha^2(1-\rho)^2}
\leq \frac{\bar r\rho}{2(1-\rho)^2}.
\end{aligned}
\]
Now, consider the recompressed version $\tilde u(h)=\mathcal R_\delta(\Phi \mathbf u^K(h))$. If $\sigma_{\min}(\tilde u(h))>\tilde \alpha$ (which could happen even when $\delta<\eps$, since hard thresholding makes a smaller error than soft thresholding), then
\[
\rank(\tilde u(h))\leq \rank(\mathcal S_{\tilde \alpha}\Phi \mathbf u^K)\leq \frac{\bar r\rho}{2(1-\rho)^2}.
\]
Otherwise, for $v=\Phi \mathbf u^K(h)-\mathcal S_{\tilde \alpha}\Phi \mathbf u^K(h)$, it holds that
\[
\rank(\tilde u(h)) =\rank(\mathcal R_\delta(\Phi \mathbf u^K(h))) = \rank(\mathcal R_\delta(v)),
\]
and we conclude by applying Lemma~\ref{lemmarecompressionranks} to $v$:
\[
\rank(\mathcal R_\delta(v)) \leq 1+\frac{\eps^2/(1-\rho)^2-\delta^2}{\eps^2/(1-\rho)^2}\rank(v)=1+\frac{\eps^2-\delta^2(1-\rho)^2}{\eps^2}\rank(\Phi \mathbf u^K(h)).\qedhere
\]
\end{proof}

We finish this section by combining the previous lemmas.

\begin{prop}
\label{prop_local_ranks}
Assume that $\rho\leq 1/2$, and let
\[
\eps\geq 4 \, \frac{1+\rho}{1-\rho} \, \| u - u^*\|_{\mathcal W(0,h)} \quad \text{and} \quad \delta = \frac{\rho \eps}{1-\rho}.
\]
If the initial rank can be bounded by
\begin{equation*}
	\rank(u_0^*)\leq \frac{1+J\rank(V)\bar r}{\rho^2},
\end{equation*}
then the rank of the approximation at the right subinterval boundary can be estimated by
\[
\rank(\tilde u(h))\leq  \frac{1+J\rank(V)\bar r}{\rho^2},
\]
and all other intermediate ranks, that is, the ranks of the iterates $\mathbf u^k$ and $\mathbf \Phi \mathbf u^k$ produced by the algorithm for $0\leq k\leq K$, are bounded by $\frac{1+\rho^2}{\rho^2}(1+J\rank(V)\bar r)$.
\end{prop}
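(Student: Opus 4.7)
My plan is to chain together the three preceding rank estimates -- Lemma~\ref{lemmaboundingtherankofuk} for the iterates $\mathbf u^k$, Lemma~\ref{lemmarankofPhiuk} for the rank inflation caused by $\mathbf \Phi$, and Lemma~\ref{lemmarankofutilde} for the recompressed boundary value -- and absorb the only remaining input rank using the inductive hypothesis on $\rank(u_0^*)$. Since the hypothesis already has the same form $\rho^{-2}(1 + J\rank(V)\bar r)$ as the conclusion, the delicate point is to verify that the recompression is sharp enough to bring the rank back down to the same envelope across a subinterval.

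For the intermediate ranks, I would first apply Lemma~\ref{lemmaboundingtherankofuk} to obtain $\rank(\mathbf u^k) \leq \bar r$ for all $0 \leq k \leq K$, and then invoke Lemma~\ref{lemmarankofPhiuk} to get $\rank(\mathbf \Phi \mathbf u^k) \leq \rank(u_0^*) + J\rank(V)\bar r$. Substituting the hypothesis on $\rank(u_0^*)$, this immediately simplifies to $\rho^{-2}(1+\rho^2)(1+J\rank(V)\bar r)$, as claimed. The iterates $\mathbf u^k$ themselves trivially lie below this envelope since $\bar r \leq 1 + J\rank(V)\bar r$ and $\rho^2 \leq 1$.

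For the boundary value $\tilde u(h)$, the key observation is that the prescribed choice $\delta = \rho\eps/(1-\rho)$ is engineered so that $(\eps^2 - \delta^2(1-\rho)^2)/\eps^2 = 1 - \rho^2$, which converts the second term in the max of Lemma~\ref{lemmarankofutilde} into $1 + (1-\rho^2)\rank(\Phi\mathbf u^K(h))$. Writing $X = 1+J\rank(V)\bar r$ and plugging in the bound $\rank(\Phi\mathbf u^K(h)) \leq X/\rho^2 + X - 1$ from the previous paragraph, a routine expansion reduces the whole expression to $X/\rho^2 - \rho^2 X + \rho^2 \leq X/\rho^2$, where the last inequality uses only $X \geq 1$. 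The first term of the max, $\bar r\rho/(2(1-\rho)^2)$, is handled by using $\rho \leq 1/2$ to bound $(1-\rho)^2 \geq 1/4$, yielding at most $\bar r \leq X/\rho^2$.

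The principal difficulty is algebraic rather than conceptual: because the hypothesis and conclusion have the same form, the $\rho^2$ gain extracted from the recompression through the calibrated choice $\delta = \rho\eps/(1-\rho)$ must exactly cancel the $\rho^{-2}$ rank inflation produced when $\mathbf \Phi$ adds the initial data to a bounded-rank update. Without this exact scaling of $\delta$, Lemma~\ref{lemmarankofutilde} would leave a near-unit coefficient in front of $\rank(\Phi\mathbf u^K(h))$ and the envelope would grow geometrically across subintervals rather than remaining stable; the assumption $\rho \leq 1/2$ plays only the secondary role of absorbing the $1/(1-\rho)^2$ factor coming from the first term of the max.
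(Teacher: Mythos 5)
Your proposal is correct and follows essentially the same route as the paper's own proof: combining Lemmas~\ref{lemmaboundingtherankofuk}, \ref{lemmarankofPhiuk} and \ref{lemmarankofutilde} with the calibrated choice of $\delta$ so that $(\eps^2-\delta^2(1-\rho)^2)/\eps^2 = 1-\rho^2$, then checking that the resulting expression stays under the envelope $X/\rho^2$ with $X = 1+J\rank(V)\bar r$. The only (innocuous) difference is cosmetic -- you round $\rank(\mathbf\Phi\mathbf u^k) \le X/\rho^2 + (X-1)$ up to $X(1+\rho^2)/\rho^2$ and use $X\ge 1$ at the end, whereas the paper keeps the $X-1$ and arrives at $X/\rho^2 - (X-1)\rho^2$; both are identical bookkeeping for the same estimate. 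Your concluding remark that the $\rho^2$ gain from recompression must exactly cancel the $\rho^{-2}$ inflation from reinjecting $u_0^*$ is a correct and useful reading of why the threshold choice is what it is, even though the paper does not state it explicitly.
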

\begin{proof}
Combining Lemmas~\ref{lemmarankofutilde}, \ref{lemmarankofPhiuk} and \ref{lemmaboundingtherankofuk}, we either have $\rank(\tilde u(h))\leq \frac{\bar r\rho}{2(1-\rho)^2}\leq \bar r$, or 
\begin{align*}
\rank(\tilde u(h))
&\leq 1+\frac{\eps^2-(1-\rho)^2\delta^2}{\eps^2}\biggl(\frac{1+J\rank(V)\bar r}{\rho^2}+J\rank(V)\bar r\biggr)\\
&\leq 1+\frac{1-\rho^2}{\rho^2}+(1-\rho^2)J\rank(V)\bar r\biggl(\frac{1}{\rho^2}+1\biggr)\\
&= \frac{1}{\rho^2}+J\rank(V)\bar r\biggl(\frac{1}{\rho^2}-\rho^2\biggr).
\end{align*}
The bound on intermediate ranks follows by another application of Lemmas~\ref{lemmarankofPhiuk} and \ref{lemmaboundingtherankofuk}.
\end{proof}

\begin{remark}
The assumption $\rho\leq 1/2$ is easily satisfied, by taking a time step $h$ twice smaller than what is already needed for the contractivity in Proposition~\ref{prop_contractivity}. It is included only to avoid further technicalities, by allowing to bound
the first option in the maximum of Lemma~\ref{lemmarankofutilde} by $\bar r$.
If it was removed, the bounds on the ranks in Proposition~\ref{prop_local_ranks} would have to be replaced by
\[
\max\biggl\{ \frac{\bar r\rho}{2(1-\rho)^2},\frac{1+J\rank(V)\bar r}{\rho^2}\biggr \} .
\]
\end{remark}

\subsection{Global bounds}
We have now all necessary tools to prove the main theorem. Assume that we know some upper bounds on the constants $\kappa_J$ and $\kappa_{2J}$, and denote by $\tilde u_n$ the approximation generated by the scheme at time $nh$, $1 \leq n \leq N = T/h$.
Let $\eta$ denote the target accuracy, then our target for the soft threshold is
\[
\alpha_n=\inf_{(n-1)h\leq t\leq nh}\min_{\|u(t)-v\|\leq \eta} \sigma_{\max}(v)=\max\bigl\{\alpha>0 \colon \forall t\in [(n-1)h,nh],\;\|u(t) - \mathcal S_\alpha u(t)\|\leq \eta \bigr\},
\]
since the minimizer is of the form $v=\mathcal S_{\alpha(t)} u(t)$ for some $\alpha(t)>0$. Our objective for the ranks is then given by
\[
r_n=\frac{\eta^2}{\alpha_n^2}=\frac{\|u-\mathcal S_{\alpha_n}u\|_{\mathcal W((n-1)h,nh)}^2}{\alpha_n^2}.
\]
We further define
\[
\xi_n= \bigl(\eta+\kappa_Jh^{J+1}+n\kappa_{2J}h^{2J+1}\bigr)\exp\left(8\rho n\frac{1+\rho}{(1-\rho)^3}\right)
\]
and make the choice
\[
\eps_n=4\xi_{n-1}\frac{1+\rho}{(1-\rho)^2}\quad\text{and}\quad  \delta_n=\frac{\rho \eps_n}{1-\rho}.
\]

\begin{theorem}
\label{thm:main}
The proposed scheme achieves a global error bounded by
\[
\sup_{1 \leq n \leq N}\|\tilde u_n - u(nh)\|\leq \left(\eta + \kappa_J\,h^{J+1}+\kappa_{2J}\,h^{2J}\,T\right)\exp\left(8\Lambda_JC_V\frac{1+\rho}{(1-\rho)^3}T\right).
\]
The ranks of the scheme are bounded by
\[
\rank(\tilde u_n)\leq  \frac{16}{\rho^3}\left(\frac{1+\rho}{1-\rho}\right)^2\,\Big(1+J\rank(V)\max\{r_1, r_2, \dots,r_n\}\Big),
\]
for all $1 \leq n \leq N$, and the intermediate ranks are at most twice as large.
\end{theorem}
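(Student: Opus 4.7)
The plan is to carry out a single induction on the subinterval index $n$, carrying forward a strengthened inductive hypothesis that simultaneously controls the accumulated error $e_n := \|\tilde u_n - u(nh)\|$ and the rank. The strengthening is necessary because the local rank bound in Proposition~\ref{prop_local_ranks} is only valid under the precondition $\eps_n \geq 4\frac{1+\rho}{1-\rho}\|u-u^*\|_{\mathcal W((n-1)h,nh)}$, which in turn depends on the global error committed in all previous subintervals through Lemma~\ref{localerrorboundinside}. Concretely, I will show by induction that
\[
e_n + \kappa_J h^{J+1} \;\leq\; \xi_n
\]
with $\xi_n$ as defined just before the theorem, from which both conclusions then follow.

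For the error accumulation, I fix the $n$-th subinterval, identify the collocation data $u_0^\diamond = u((n-1)h)$ and the numerical initial value $u_0^* = \tilde u_{n-1}$, so that $\|u_0^\diamond - u_0^*\| = e_{n-1}$. Proposition~\ref{prop_local_error} together with the choices $\eps_n = 4\xi_{n-1}(1+\rho)/(1-\rho)^2$ and $\delta_n = \rho \eps_n/(1-\rho)$ yields
\[
e_n \;\leq\; e_{n-1} + \frac{2\rho\,\eps_n}{1-\rho} + \kappa_{2J}\,h^{2J+1} \;=\; e_{n-1} + c\,\xi_{n-1} + \kappa_{2J}\,h^{2J+1},
\]
where $c = 8\rho(1+\rho)/(1-\rho)^3$. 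Adding $\kappa_J h^{J+1}$ and using the inductive bound $e_{n-1} + \kappa_J h^{J+1} \leq \xi_{n-1}$ gives $e_n + \kappa_J h^{J+1} \leq (1+c)\xi_{n-1} + \kappa_{2J} h^{2J+1}$; the elementary inequality $1+c \leq e^c$ combined with the recursive definition of $\xi_n$ (noting $\xi_n - e^{c}\xi_{n-1} = \kappa_{2J}h^{2J+1} e^{cn} \geq \kappa_{2J}h^{2J+1}$) closes the induction. Setting $n = N = T/h$ and inserting $\rho = \Lambda_J C_V h$ reproduces the stated global error bound.

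For the rank bound, the inductive hypothesis $e_{n-1} + \kappa_J h^{J+1} \leq \xi_{n-1}$ together with Lemma~\ref{localerrorboundinside} gives $\|u-u^*\|_{\mathcal W} \leq \xi_{n-1}/(1-\rho)$, which is exactly the required precondition on $\eps_n$, so Proposition~\ref{prop_local_ranks} applies and yields $\rank(\tilde u_n) \leq (1 + J\rank(V)\bar r_n)/\rho^2$ with $\bar r_n$ built from the local threshold $\bar\alpha_n$ satisfying $\|u - \mathcal S_{\bar\alpha_n}u\|_{\mathcal W((n-1)h,nh)} = \frac{(1-\rho)\eps_n}{4(1+\rho)}$. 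To convert this local quantity into the benchmark rank $r_n = \eta^2/\alpha_n^2$, I use Lemma~\ref{lemma:thresholds} to estimate $\|u - \mathcal S_{\bar\alpha_n}u\|_{\mathcal W} \leq (\bar\alpha_n/\alpha_n)\eta$, which gives $\bar r_n \leq \frac{16}{\rho}\bigl(\frac{1+\rho}{1-\rho}\bigr)^2 r_n$. Taking the maximum over all previous subintervals produces the monotone bound stated in the theorem, and the statement on intermediate ranks follows directly from the second bound in Proposition~\ref{prop_local_ranks}.

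The main obstacle is the coupling between the two inductions: the threshold schedule $\eps_n$ must be chosen so that $\xi_{n-1}$ simultaneously dominates the previous error $e_{n-1}$ \emph{and} the intrinsic interior collocation error $\kappa_J h^{J+1}$ that appears in the rank precondition via Lemma~\ref{localerrorboundinside} but not in the error recurrence of Proposition~\ref{prop_local_error}. This is precisely why the inductive hypothesis is stated with the additive $\kappa_J h^{J+1}$, and why the initial term of $\xi_0$ contains $\eta + \kappa_J h^{J+1}$ rather than only $\eta$. A secondary subtlety is that the quantities $\bar r_n$ need not be monotone in $n$ even though $\rank(\tilde u_{n-1})$ enters the rank estimate for subinterval $n$; this is resolved by passing to $\max\{r_1,\dots,r_n\}$ in the final statement, which absorbs the non-monotonicity without affecting the quasi-optimality rate.
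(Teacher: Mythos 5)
Your proposal is correct and follows essentially the same path as the paper: induct on $n$ with the strengthened hypothesis $\|\tilde u_n - u(nh)\| \leq \xi_n - \kappa_J h^{J+1}$, close the error recursion via $1+c\leq e^c$, verify the precondition of Proposition~\ref{prop_local_ranks} from Lemma~\ref{localerrorboundinside}, and convert $\bar r_n$ to $r_n$ using the fact that $\bar\alpha_n\geq\alpha_n$. The only omission relative to the paper is the base case: you do not specify the initialization $\tilde u_0=\mathcal H_{\alpha_0}u(0)$ with $\alpha_0=\min_{\|u(0)-v\|\leq\eta}\sigma_{\max}(v)$, which is what guarantees both $e_0\leq\eta=\xi_0-\kappa_Jh^{J+1}$ and $\rank(\tilde u_0)\leq r_0\leq r_1$ to start the two coupled inductions; without pinning this down, the error induction and the rank precondition at $n=1$ are not grounded. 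Your treatment of the non-monotonicity of $\bar r_n$ is correct in spirit but a bit terse: one should note that all the estimates of Lemmas~\ref{lemmaboundingtherankofuk}--\ref{lemmarankofutilde} are monotone in $\bar r$, so replacing $\bar r_n$ by $\max_{m\leq n}\bar r_m$ throughout Proposition~\ref{prop_local_ranks} keeps the conclusion valid and makes the rank invariant non-decreasing, which is exactly what the $\max\{r_1,\dots,r_n\}$ in the statement encodes.
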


\begin{proof}
For the global error bound, we prove by induction that
\begin{equation}
\label{inductionwithxin}
\|\tilde u_n-u(nh)\|\leq \xi_n-\kappa_Jh^{J+1}.
\end{equation}
This is sufficient since $\rho=\Lambda_JC_Vh$ and $nh\leq T$.
We initialise the scheme with $\tilde u_0 = \mathcal H_{\alpha_0} u(0)$,
where $\alpha_0=\min_{\|u(0)-v\|\leq \eta}\sigma_{\max}(v)\geq \alpha_1$ and $r_0=\eta^2/\alpha_0^2\leq r_1$, which ensures that
\[
\|\tilde u_0-u(0)\|\leq \eta = \xi_0 - \kappa_Jh^{J+1}
\quad\text{and}\quad
\rank(\tilde u_0)\leq \frac{\|u(0)-\mathcal S_{\alpha_0}u(0)\|^2}{\alpha_0^2}= r_0.
\]
For $1\leq n \leq N$, assume that \eqref{inductionwithxin} holds at index $n-1$, and denote $u^\diamond_0=u((n-1)h)$ and $u^*_0=\tilde u_{n-1}$ the values of the solution and the scheme at time $(n-1)h$. By Lemma~\ref{localerrorboundinside},
\[
\|u^*-u\|_{\cW((n-1)h,nh)} \leq 
\frac{\kappa_J h^{J+1} + \|\tilde u_{n-1}-u((n-1)h)\|}{1-\rho}\leq \frac{\xi_{n-1}}{1-\rho}=\frac{1-\rho}{1+\rho}\frac{\eps_n}4
\]
hence we satisfy the condition of Lemma~\ref{lemmaboundingtherankofuk}. In addition, by Proposition~\ref{prop_local_error},
\begin{align*}
\|\tilde u_n-u(nh)\|
&\leq \delta_n+\frac{\rho\eps_n}{1-\rho}+\kappa_{2J}h^{2J+1}+\xi_{n-1}-\kappa_Jh^{J+1}\\
&\leq \biggl(1+8\rho\frac{1+\rho}{(1-\rho)^3}\biggr)\xi_{n-1}+\kappa_{2J}h^{2J+1}-\kappa_Jh^{J+1}\leq \xi_n-\kappa_Jh^{J+1},
\end{align*}
where we used the fact that $1 + x \leq e^x$, and this completes the induction.

Next, we define $\bar{\alpha}_n$ for each interval $[(n-1)h,nh]$ such that
\begin{equation*}
	\| u - \mathcal S_{\bar \alpha_n} u \|_{\mathcal W((n-1)h,nh)} = \frac{1-\rho}{1 + \rho}\, \frac{\eps_n}{4},
\end{equation*}
and again set $\bar{r}_n = \varepsilon_n^2 / (\rho \bar{\alpha}_n^2)$. For the ranks, observe that for all $n\leq T/h$, we have $\frac{1-\rho}{1+\rho}\frac{\eps_n}4\geq \eta$, thus $\bar \alpha_n \geq \alpha_n$ and therefore
\[
\frac{\|u(t)-\mathcal S_{\bar \alpha_n}u(t)\|^2}{\bar \alpha_n^2}\leq \frac{\|u(t)-\mathcal S_{\alpha_n}u(t)\|^2}{\alpha_n^2} \leq r_n.
\]
In particular, for any $1 \leq n \leq N$, the value of $\bar r_n$ satisfies
\[
\bar r_n \leq \frac{16}{\rho}\frac{(1+\rho)^2}{(1-\rho)^2} r_n,
\]
and we conclude by induction on Proposition~\ref{prop_local_ranks}.
\end{proof}

\begin{remark}
We actually defined a scheme on the whole interval $[0,T]$, which satisfies
\[
\sup_{t\in [0,T]}\|\tilde u(t)-u(t)\|\leq \frac{1}{1-\rho}\left(\eta + \kappa_J\,h^{J+1}+\kappa_{2J}\,h^{2J}\,T\right)\exp\left(8\Lambda_JC_V\frac{1+\rho}{(1-\rho)^3}T\right).
\]
The proof is essentially the same, with one additional use of Lemma~\ref{localerrorboundinside} in the interval containing $t$, combined with \eqref{inductionwithxin} at $n=\lfloor t/h\rfloor$.
The ranks of $\tilde u(t)$ are bounded in the same way as above, which justifies the quasi-optimality statements made in the introduction.
\end{remark}

\begin{remark}
In Theorem~\ref{thm:main}, the bound on the ranks can only grow as $n$ grows.
If the ranks of the solution decrease over time,
a closer inspection at the proof of Proposition~\ref{prop_local_ranks} reveals that the bound on the ranks of the scheme can actually decrease, by a factor at best $1-\rho^2$, from one interval to the next. The rate of decay $(1-\rho^2)^n$ is however quite pessimistic, and the ranks computed numerically can track a faster decay, 
as shown in Section \ref{sec:numexp} (see Figure~\ref{fig:parabolic}) for the parabolic case.
\end{remark}

\begin{remark}[Analysis of the SDC scheme]
	A central difference between Algorithms~\ref{alg:Picard} and \ref{alg:SDC} is the fact that the soft thresholding operator is applied to the intermediate approximations at different points in the algorithm. While for the Gauss-Legendre-Picard iteration this is done after each application of the fixed-point iteration $\mathbf{\Phi}_{\mathrm{Picard}}$ in matrix form, the same can not be said about the SDC iterations. Here, the soft thresholding operator is applied after the component-wise application of $\mathbf{\Phi}_{\mathrm{SDC}}$, making it more difficult to use the non-expansiveness property. One possibility to fully analyze Algorithm \ref{alg:SDC} would thus be to decouple $\mathbf{\Phi}_{\mathrm{SDC}}$ into fixed-point iterations on each Gauss-Legendre node. Combined with the general contractivity of the SDC scheme, one could think about using different soft thresholding parameters (and strategies to decrease them) for the $J$ decoupled fixed-point iterations.
\end{remark}

\section{Adaptation to parabolic problems}\label{sec:parabolic}

An essential ingredient of the described iterative algorithms is certainly the choice of quadrature points $t_1, t_2, \dots, t_J$. While for the time-dependent Schrödinger equation, we had numerous reasons to take Gauss-Legendre nodes, they turn out to be rather unfavorable in the parabolic setting: because of the 
isometry-preservation,
the damping of high frequencies is correctly reproduced only for very small time steps $h$.

Recall the scalar test problem \eqref{eq:exptestproblem} used to analyze the stability properties of Runge-Kutta schemes. A somewhat stronger notion of stability can be defined by the so-called \textit{L-stable} methods, for which the stability function $R$ needs to additionally satisfy
\begin{equation*}
|R(z)| \rightarrow 0 \ \text{ as } \ \text{Re}(z) \rightarrow -\infty,
\end{equation*}
that is, the numerical method reproduces the damping of modes with negative singular values. Note that this rules out preservation of isometries, since for rational functions $R$ we have $\lim_{x\to -\infty}|R(x)|=\lim_{y\to \infty}|R(iy)|$. Such a method can be obtained by instead using Radau-Legendre quadrature nodes on the respective subintervals. 

 \begin{example}[Radau-Legendre quadrature]
 		In contrast to the Gauss-Legendre quadrature, we force the last node to coincide with the right interval boundary $t_J = t_0+h$.
Optimizing over the remaining degrees of freedom, the quadrature formula \eqref{quadrature_approx} can at most attain a degree of accuracy $2J-2$, that is, one less than the Gauss-Legendre quadrature formula.
The optimal choice of points can again be computed by considering a tridiagonal matrix: it turns out it suffices to modify the bottom right entry of matrix \eqref{tridiaggauss}, in such a way that the largest eigenvalue of the resulting matrix is exactly $t_0+h$, see for example \cite{alqahtani_computation_2025}. The Radau-Legendre quadrature weights and nodes can then be analogously derived from the eigenvalues and eigenvectors of the modified matrix. The resulting Gauss method  \eqref{butcher} has then the order of convergence $2J-1$.
 \end{example}
 
 The modified fixed-point formulation for the parabolic problem \eqref{eq_parabolic} was discussed in Example \ref{ex:mod_pf_parabolic}. Unfortunately, as already noted, we cannot directly translate the application of the Radau-Legendre-Picard iteration to this formulation,
 because the equivalent of formula \eqref{eq_untwisted_phi} for the definition of $\mathbf \Phi$ would include exponential operators of the form
 $e^{a(t_j - t_m) \Delta}$, which are unbounded when $t_j - t_m < 0$ and would thus lead to an extremely unfavorable coupling to the resolution of the spatial discretization. As before, we aim to avoid such a coupling between time steps size and spatial discretization in order to allow larger time steps.
	
In order to avoid taking exponentials of negative Laplacians, we instead only interpolate the second part of the integrand in \eqref{parabolic_duhamel}, namely $\mathcal G u(s) + f(s)$, by a Lagrange polynomial
\[
p(s)= \sum_{m=1}^J \bigl( \mathcal G u(t_m) + f(t_m) \bigr) \ell_m(s)=\sum_{m=1}^J 2b\partial_{x_1}\partial_{x_2} u(t_m) \ell_m(s) + f.
\]
To compute the fixed-point application $\mathbf \Phi_{\mathrm{Picard}}(\mathbf u)_j$, for $1\leq j\leq J$, we thus need to approximate integrals of the type
\begin{equation*}
	\int_{t_{j-1}}^{t_j} e^{a(t_j-s) \Delta} p(s) \, ds.
\end{equation*}
As the above integrand is still singular at $s=t_j$, we divide the integration domain by applying $N_j\in \N$ bisections that concentrate near $t_j$, that is,
\[
[t_{j-1}, t_j]=\left[t_j-\tau_j,t_j-\frac{\tau_j}{2}\right]\cup\dots\cup \left[t_j-\frac{\tau_j}{2^{N_j-1}},t_j-\frac{\tau_j}{2^{N_j}}\right]\cup \left[t_j-\frac{\tau_j}{2^{N_j}}, t_j\right],
\]
where $\tau_j=t_j-t_{j-1}$, and use a quadrature formula $\mathcal Q_a^b(g)\approx \int_a^b g(s)\,ds$ of moderate degree on each of these intervals.
In the end, Duhamel's formula \eqref{parabolic_duhamel} is approximated by
\[
\mathbf \Phi_{\mathrm{Picard}}(\mathbf u)_j=e^{a\tau_j\Delta}\mathbf \Phi_{\mathrm{Picard}}(\mathbf u)_{j-1}+\sum_{n=1}^{N_j} \mathcal Q_{t_j-\frac{\tau_j}{2^{n-1}}}^{t_j-\frac{\tau_j}{2^n}}\big(e^{a(t_j-\cdot) \Delta} p(\cdot)\big)+\mathcal Q_{t_j-\frac{\tau_j}{2^{N_j}}}^{t_j}\big(e^{a(t_j-\cdot) \Delta} p(\cdot)\big).
\]
Note that we only need to compute the fixed-point application at the nodes $t_j$, since the interval boundary $t_0+h$ is equal to $t_J$.

Concerning $\mathbf \Phi_{\mathrm{SDC}}$, we use the same construction, with an additional explicit low-order integrator $\Psi_{t_{j-1},t_j}$ as in the dispersive setting \eqref{eq:SDC}. In the numerical tests of Section~\ref{sec:numexp}, we take the explicit Euler integrator
\[
\Psi_{t_{j-1},t_j}=\tau_j e^{a\tau_j\Delta}\mathcal G
\]
applied to $\mathbf \Phi_{\mathrm{SDC}}(\mathbf u)_{j-1}-u_{j-1}$, and do not need to consider the source term $f$ since the correction is a difference of two solutions.

Due to the introduced secondary quadrature, the resulting (modified) Radau-Legendre-Picard iteration is no longer equivalent to the collocation formulation and corresponding Runge-Kutta method \eqref{butcher}. However, as the numerical tests in the following section show, the resulting schemes still perform as expected, which makes them promising candidates for further analysis.

\section{Numerical experiments}\label{sec:numexp}

In this section, we present numerical experiments for the time-dependent Schr\"odinger and parabolic model problems \eqref{eq_Schrodinger} and \eqref{eq_parabolic}. All tests were performed using Julia version 1.10.0. 
Recall that we are considering evolution problems on product domains $\Omega_1 \times \Omega_2$, of the form
\begin{equation*}
	\partial_t{u}(t) = \mathcal{A}_t\bigl(u(t)\bigr),\qquad  t\in [0,T].
\end{equation*}
Here, we choose $\Omega_1 = \Omega_2 = [0,1]$ and impose homogeneous Dirichlet boundary conditions. We thus have the $L^2(\Omega)$ orthonormal basis expansion of the solution 
\begin{equation*}
	u(t)(x_1,x_2) = \sum_{k_1, k_2 = 1}^{\infty} 2 u_{k_1,k_2}(t) \sin(\pi k_1 x_1)\sin(\pi k_2 x_2),
\end{equation*}
with $u_{k_1,k_2}(t) \in \mathbb{C}$ (or $\mathbb R$ in the parabolic setting) for $k_1, k_2 \in \mathbb N$. 
Galerkin projection onto the first $K$ frequencies in each coordinate leads to a system of ordinary differential equations
\begin{equation*}
	\partial_t{\mathbf{u}}(t) = \mathbf{A}_t \bigl(\mathbf{u}(t)\bigr),\qquad  t\in [0,T],
\end{equation*}
for some discrete operator $\mathbf A_t: \mathbb C^{K\times K}\to  \mathbb C^{K\times K}$,
and $\mathbf{u}(t)[k_1,k_2] \approx u_{k_1,k_2}(t)$. Note that using this Fourier basis expansion, we can compute the involved matrix exponentials explicitly: the Laplacian operator in matrix form is diagonal and thus its exponential can be computed by applying the exponentials to its eigenvalues. Our aim is to obtain low-rank approximations of the discrete solution $\mathbf{u}(t)$.

\subsection{Schrödinger equation}

First, we consider the time-dependent Schrödinger equation with potential
\begin{equation*}
	V(x_1,x_2) = \cos(n \pi x_1)\cos(m \pi x_2)
\end{equation*}
with some $m,n \in \N$. The spatial semidiscretization yields
\begin{equation*}
	\partial_t{\mathbf{u}}(t) = -i \left( \mathbf{S} \otimes \mathbf{I} + \mathbf{I} \otimes \mathbf{S} + \mathbf{V}_n \otimes \mathbf{V}_m \right) \mathbf{u}(t),
\end{equation*}
with the matrices
\begin{equation*}
\begin{aligned}
	\mathbf{S}[k_1,k_2] &= \int_0^1 2 \left(\partial_{x_1} \sin(k_1 \pi x_1)\right) \left(\partial_{x_1} \sin(k_2 \pi x_1)\right) \, dx_1 , \\
	\mathbf{V}_n[k_1,k_2] & = \int_0^1 2 \cos(n \pi x_1) \sin(k_1 \pi x_1) \sin(k_2 \pi x_1) \, dx_1.
\end{aligned}
\end{equation*}
As initial data, we choose
\begin{equation*}
	\mathbf u _0 = \mathbf{v}_K \otimes \mathbf v_K, \quad \text{where} \quad \mathbf v_K \in \C^{K} \quad \text{with} \quad v_K[k] = \frac{1}{k}
\end{equation*}
and $K = 300$ frequencies. In the following, we use $n = m = 1$ for the cosine potential and a time step $h=0.1$ with $N=5$ intervals and $J=11$ Gauss-Legendre quadrature nodes. Note that here it is important that with the fixed-point formulation and iterative schemes that we use, $h$ can be chosen independently of the spatial discretization determined by $K$.

For the implementations of the two iterative solvers, we follow Algorithms \ref{alg:Picard} and \ref{alg:SDC}. For target accuracies of $\eta=10^{-3}$ and $\eta=10^{-6}$, the recompression parameter at subintervals boundaries is taken as $\delta_n=10^{-4}$ and $\delta_n= 10^{-7}$, respectively.
In practice, when computing the updated approximations $\mathbf v^{k+1}$, we use additional recompressions up to a relative error of $10^{-3}$ after every operation on low-rank functions.  Note that in case of SDC, we follow the approach described in \cite{DGR00} for the computation of the $\phi_j^k$ in line 7 of the algorithm from the error and residual functions and hence need here an additional smaller recompression tolerance of $10^{-6}$ (relative to the current approximation of the residual function), as already mendtioned in Section 3.2, due to an observed accumulation of recompression errors.

\begin{figure}[t]
	\centering
	\begin{minipage}{0.5\textwidth}
		\centering
		\includegraphics[width=\textwidth]{./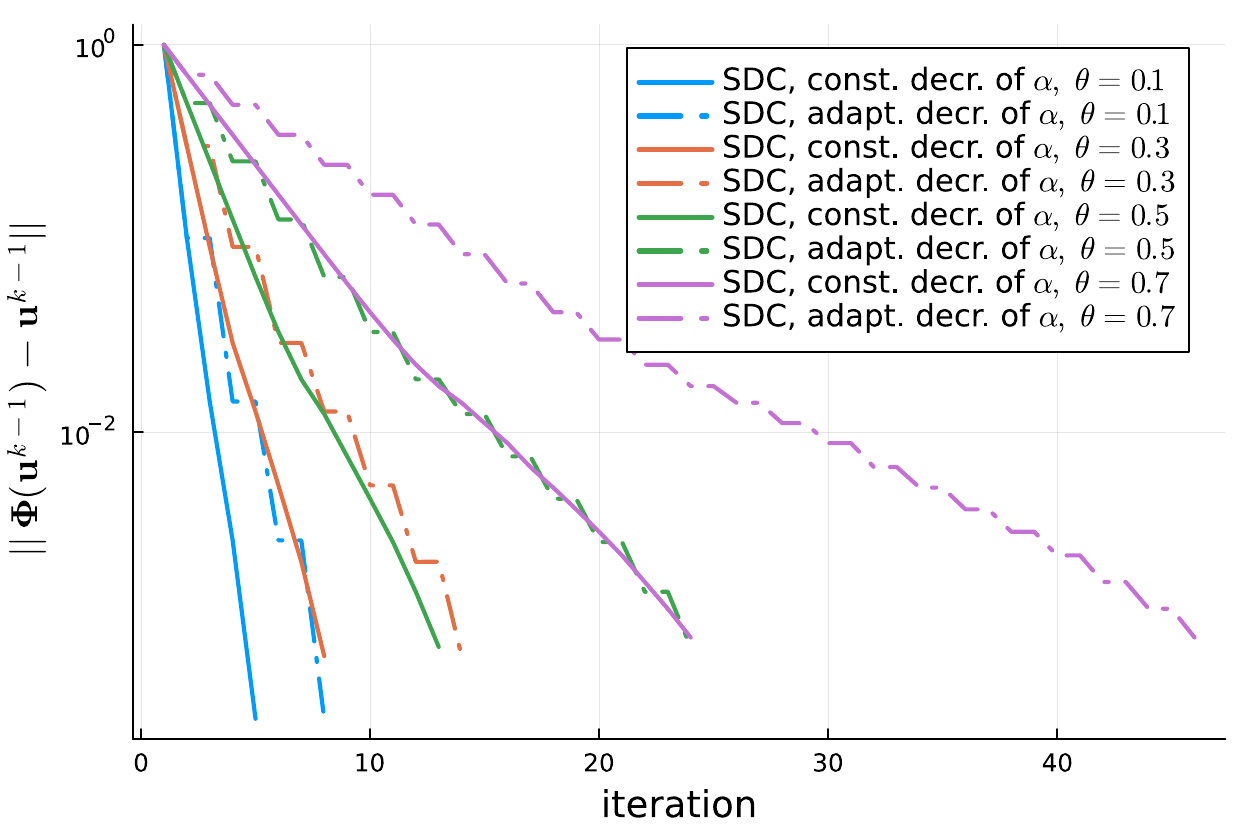}
	\end{minipage}%
	\begin{minipage}{0.5\textwidth}
		\centering
		\includegraphics[width=\textwidth]{./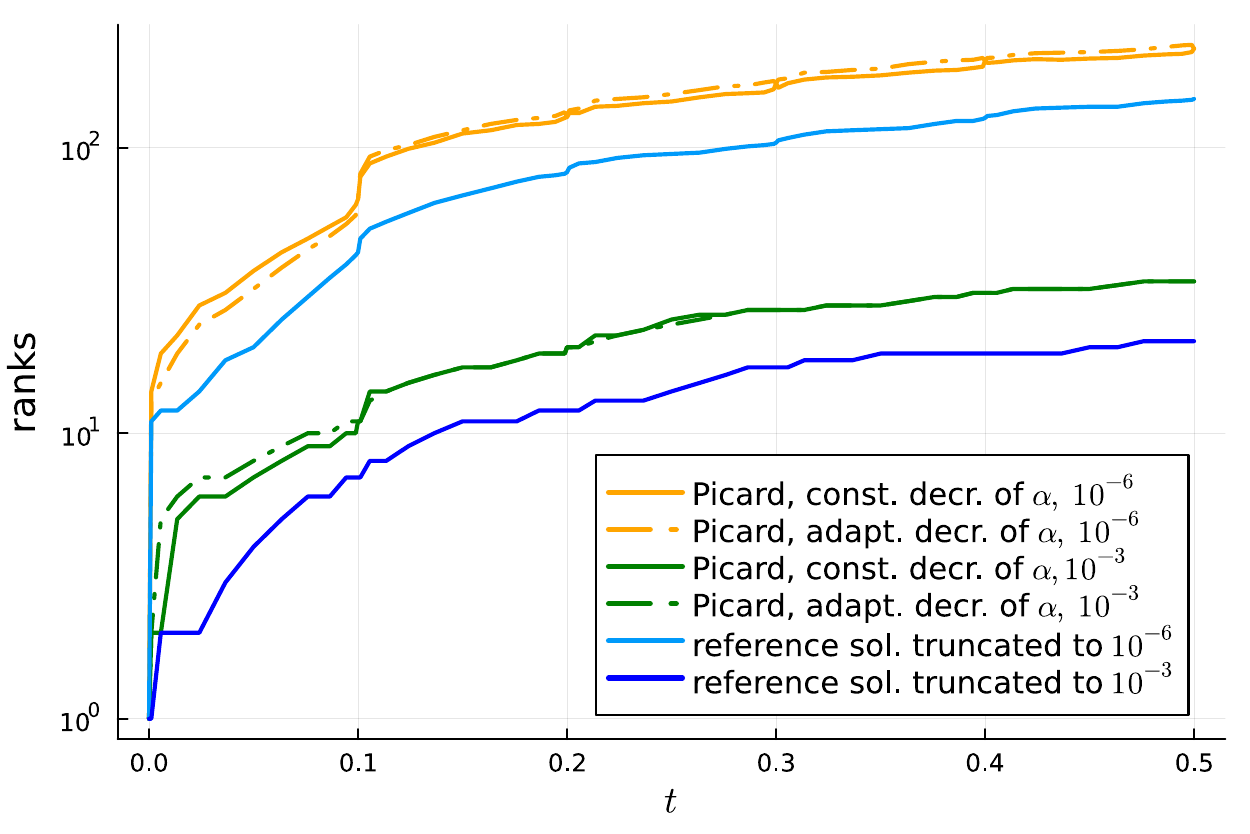}
	\end{minipage}
	\caption{Left: decrease of the residual error estimator over the iterations for varying values of $\theta$ on a single interval $[t_0,t_0+h]$. Right: evolution of the ranks using different strategies for the decrease of $\alpha$ and different iteration tolerances ($10^{-3}$ and $10^{-6}$) compared to the corresponding optimal ranks.}
	\label{fig:schroedinger_ranks}
\end{figure}

Another important parameter is the rate by which $\alpha$ is decreased. As can be deduced from the left part of Figure \ref{fig:schroedinger_ranks}, smaller values of $\theta \in (0,1)$ allow for a faster convergence of the fixed-point iteration,
 even though we only have rank bounds for $\theta\geq\rho$, and they are tighter for larger values of $\theta$.
 One can also observe that the adaptive decrease of $\alpha$ requires about twice as many iterations as the constant decrease of $\alpha$. This can be explained by the following consideration: in the adaptive setting, $\alpha_{k+1}$ is smaller than $\alpha_k$ whenever
\begin{equation*}
	\bigl\| \mathbf u^{k+1} - \mathbf u^k \bigr\|_J \leq \frac{1}{6} \bigl\| \mathbf u^k - \mathbf{\Phi} \mathbf u^k \bigr\|_J,
\end{equation*}
is satisfied, where $\mathbf{\Phi}$ now denotes the discretized fixed-point application on $\bigl(\C^{K\times K} \bigr)^J$.
This condition only seems to occur when $\mathbf u^{k+1}$ and $\mathbf u^k$ have been thresholded with the same parameter $\alpha_k=\alpha_{k-1}$, because in that case they are both close to the modified fixed point $\mathbf u_{\alpha_k}=\mathbf u_{\alpha_{k-1}}$.
Hence, $\alpha$ can only be decreased every second iteration.
Moreover, the distance to the fixed point is dominated by the soft thresholding error, which is almost proportional to $\alpha$, that is,
\begin{equation}
\label{error_dominated_by_alpha}
\|\mathbf u^{k+1} -\mathbf u^*\|_J \approx
\| (\mathcal I-\mathcal S_{\alpha_{k}}) \bigl(\mathbf  \Phi \mathbf u^{k} \bigr) \|_J \approx \alpha_{k} \sqrt{\mathrm{rank}( \mathbf u^{k+1} )},
\end{equation}
justifying the dependence of the accuracy in the number of executed iterations.
In the following, we thus use $\theta=\frac12$ and $\theta=\frac15$ for the constant and adaptive decrease of $\alpha$, respectively, in order to maintain comparable numbers of iterations. Note that the illustration on the left of Figure \ref{fig:schroedinger_ranks} does not change visibly if we replace the SDC scheme by the simpler Picard iteration, which is another indicator of the fact that the iteration error is dominated by the soft thresholding error.

We continue our numerical investigations by considering the performance of the Picard iterative solver combined with different iteration tolerances. The obtained evolution of ranks is visualized on the right of Figure \ref{fig:schroedinger_ranks}: for both iteration tolerances the Picard method combined with the two possible reduction strategies of $\alpha$ follows, up to a constant, the trajectories of the optimal ranks.
The fixed-point reference solution was computed in full matrix format using the Picard iteration with iteration tolerance $10^{-12}$ and without any recompression or truncation.

\begin{figure}[t]
	\centering
	\begin{minipage}{0.5\textwidth}
		\centering
		\includegraphics[width=\textwidth]{./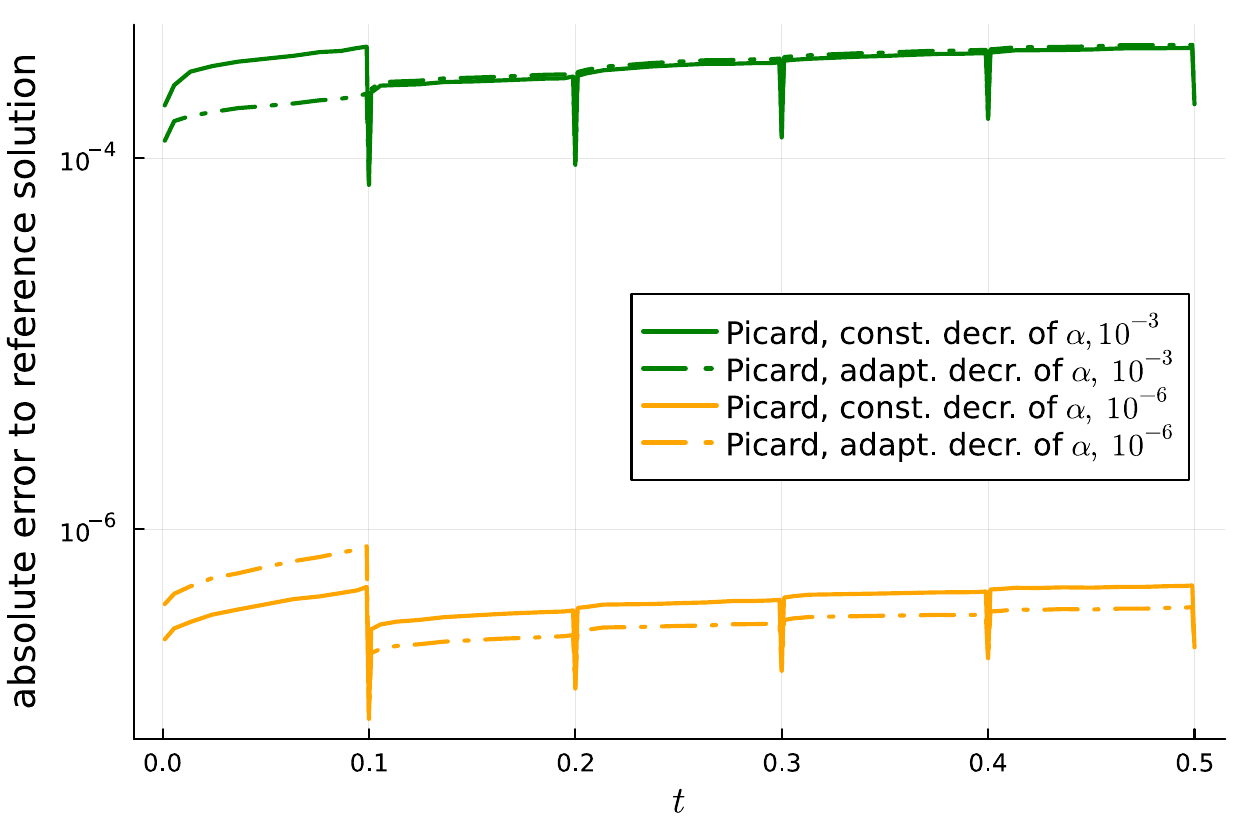}
	\end{minipage}%
	\begin{minipage}{0.5\textwidth}
		\centering
		\includegraphics[width=\textwidth]{./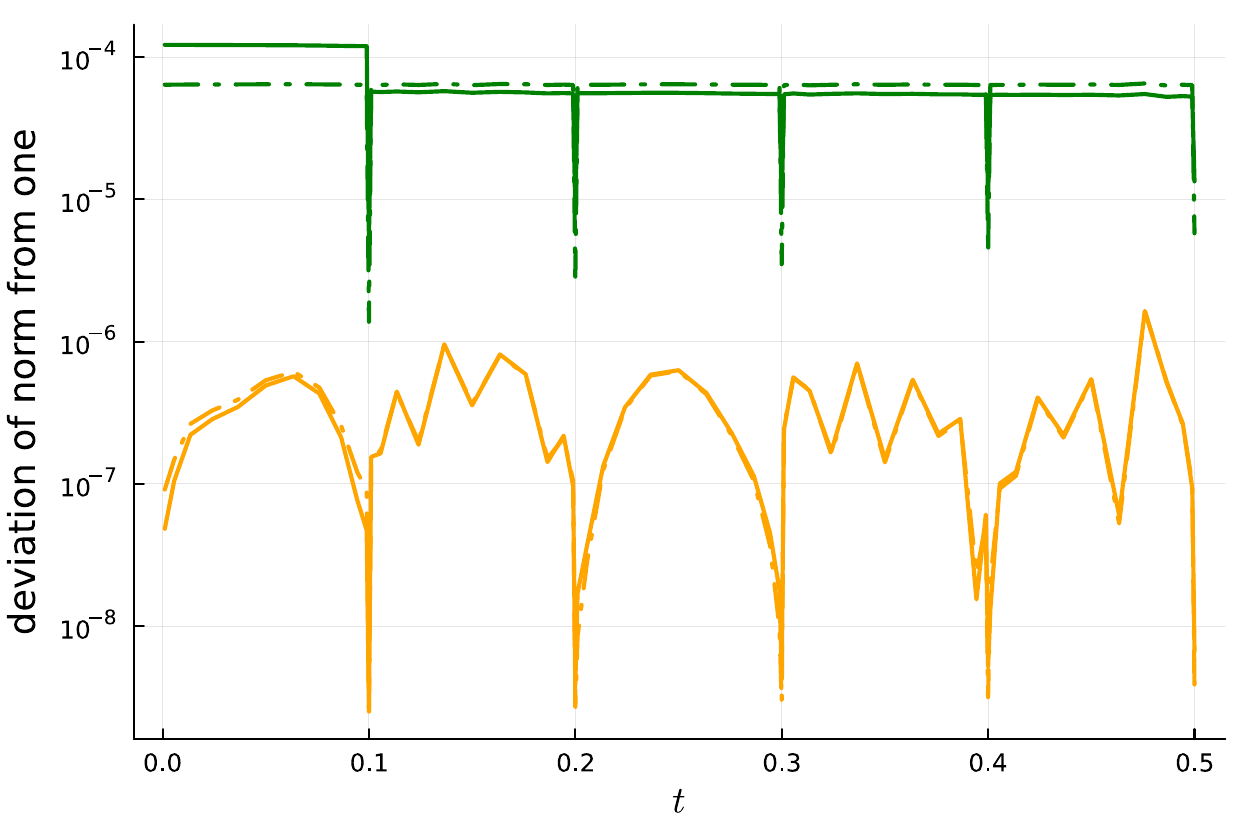}
	\end{minipage}
	\caption{Left: evolution of the absolute error with respect to a reference solution. Right: deviation of the norm of the computed approximations from $\|u_0\|=1$ over time.
	}
	\label{fig:schroedinger_norms}
\end{figure}

As can be deduced from the left part of Figure \ref{fig:schroedinger_norms}, the absolute error accumulates only slowly over time and remains below the tolerances used for the iterative solvers over the entire interval $[0,0.5]$. In particular, we have a significantly smaller error at the subinterval boundaries: while on the inner nodes, the error is in general expected to be of order $\mathcal{O}(h^{J+1})$,
we expect an accuracy of $\mathcal{O}(h^{2J})$ at the subinterval boundaries.

In addition, the recompression at the boundary nodes is expected to preserve the $L^2(\Omega)$ norm of the scheme much better than soft thresholding.
Indeed, for a function $u$ with singular values $(\sigma_k)$ and a threshold $\alpha\ll 1$, 
\begin{equation*}
	\| u \|^2 - \| \mathcal S_{\alpha}(u) \|^2 = \sum_{k\in \N} \sigma_k^2-\max(\sigma_k-\alpha,0)^2
	=\sum_{\sigma_k\geq \alpha} (2\alpha\sigma_k-\alpha^2)+\sum_{\sigma_k<\alpha}\sigma_k^2
	=\mathcal{O}(\alpha).
\end{equation*} 
On the other hand, the recompression behaves similarly to hard thresholding, for which the norm deviation is much smaller
\begin{equation*}
	\| u \|^2 - \| \mathcal H_{\alpha}(u) \|^2 = \sum_{\sigma_k>\alpha} \sigma_k^2=\mathcal{O}(\alpha^2).
\end{equation*}
This can be seen on the right of Figure \ref{fig:schroedinger_norms}. In fact, we observe a preservation of the norm at the subinterval boundaries up to iteration and recompression tolerances.

\subsection{Parabolic problem}
The second running example that we consider is the heat equation with anisotropic diffusion \eqref{eq_parabolic}. The problem is approximated, analogously to the first example, by an ODE of the form
\begin{equation*}
	\partial_t{ \mathbf{u} } = \left( - a(\mathbf{S} \otimes \mathbf{I} + \mathbf{I} \otimes \mathbf{S}) + 2b \mathbf{B} \otimes \mathbf{B} \right)\mathbf{u} + \mathbf{f} \otimes \mathbf{f}, \qquad \mathbf{u} \in \mathbb R ^{K \times K},
\end{equation*}
with parameters $K=500$, $a = 1$ and $b = -\frac{1}{2}$, and where the matrix $\mathbf{B}$ is given by
\begin{equation*}
	\mathbf{B}[k_1,k_2] = \int_0^1 2 (\partial_{x_1} \sin(k_1 \pi x_1)) \sin(k_2 \pi x_1) \, dx_1.
\end{equation*}
The initial data $\mathbf u_0$ and source term $\mathbf f$ are chosen in order to first observe an increase in ranks, followed by a quick decrease due to the smoothing properties of the equation, and finally a slight long-term rise of the ranks caused by the source term. More precisely, we set
\begin{equation*}
	\mathbf u_0[k_1,k_2] = \begin{cases}
		\delta_{k_1,k_2}, \quad &\text{for } 21 \leq k_1,k_2 \leq 30, \\
		0, \quad &\text{otherwise},
	\end{cases} \quad \text{and} \quad \mathbf f[k] = \begin{cases}
	k, \quad & \text{for } 1 \leq k \leq 10, \\
	0, \quad & \text{otherwise}.
	\end{cases}
\end{equation*}
In order to best capture the initial rank evolution, we use a stepsize of $h=0.001$ with $N=10$ intervals and $J=11$ Radau-Legendre quadrature nodes. Note that with this choice of points, since $t_J = t_{0}+h$, we also apply the soft thresholding operator to the approximations computed at the right subinterval boundaries instead of the recompression operator. Larger step sizes would be possible for this problem without any stability issues. However, resolving the initial rank fluctuations then requires a correspondingly higher polynomial degree.

For the secondary quadrature, we take $N_j=5$ refinements by bisection of the intervals $[t_0,t_j]$ for $1 \leq j \leq J$ and use Radau-Legendre quadrature $\mathcal Q_a^b$ with 5 nodes on each of the resulting subintervals.
Without this special form of integral approximation, a much larger number of interpolation points (at least $7J$) is needed to reach tolerances below an accuracy of $10^{-9}$. Otherwise, the fixed-point iterations only converge up to a certain error, and then rapidly diverge.

We again use Algorithms \ref{alg:Picard} and \ref{alg:SDC}, respectively, and make use of additional low-rank recompressions in intermediate steps. For recompressing intermediate solutions after the summation of two low-rank matrices, we use a relative tolerance of $10^{-4}$, see also the previous example, except when we approximate the residual function within the SDC iteration, where a lower relative recompression tolerance of $10^{-6}$ is required.

The fixed-point reference solution was computed in full matrix format using the Picard iteration with an iteration tolerance of $10^{-9}$, and 10 dyadic refinements with 5 quadrature nodes on each subinterval for the secondary quadrature.

\begin{figure}[t]
	\centering
	\begin{minipage}{0.5\textwidth}
		\centering
		\includegraphics[width=\textwidth]{./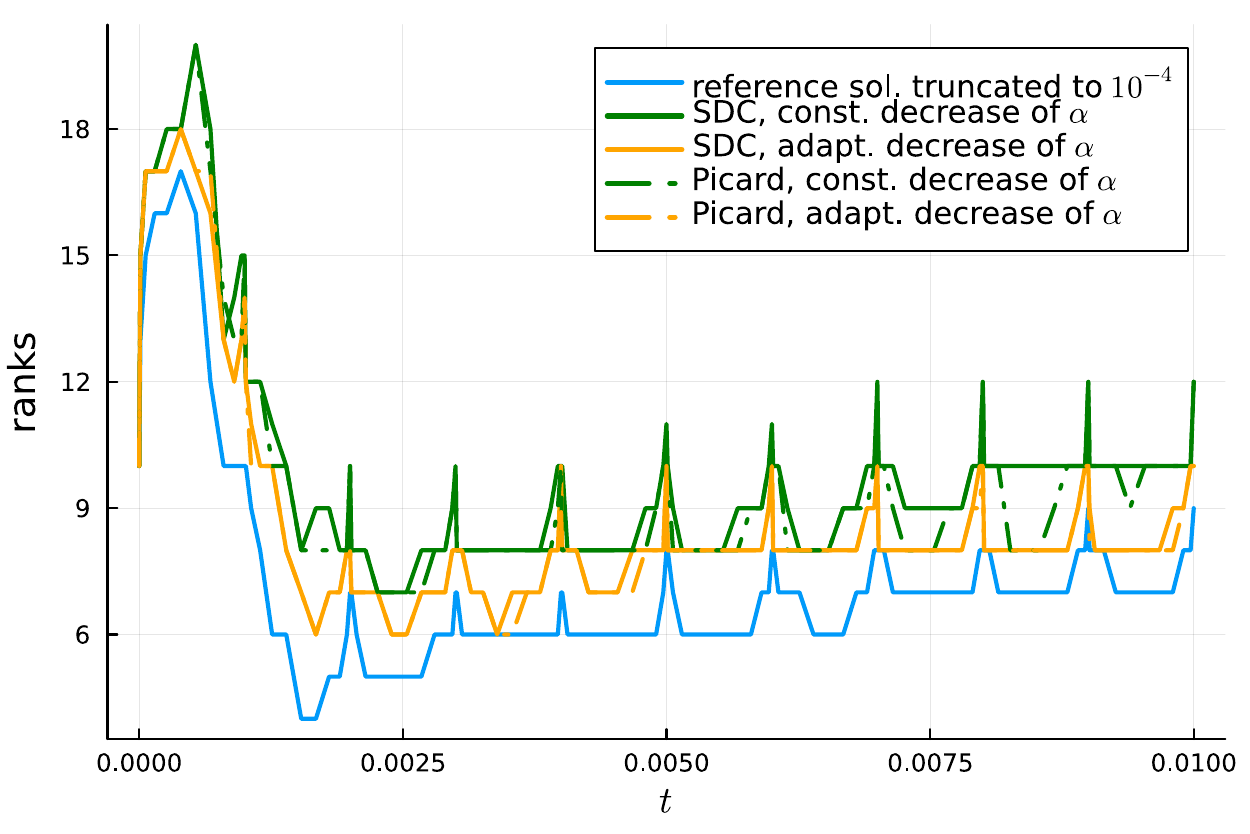}
	\end{minipage}%
	\begin{minipage}{0.5\textwidth}
		\centering
		\includegraphics[width=\textwidth]{./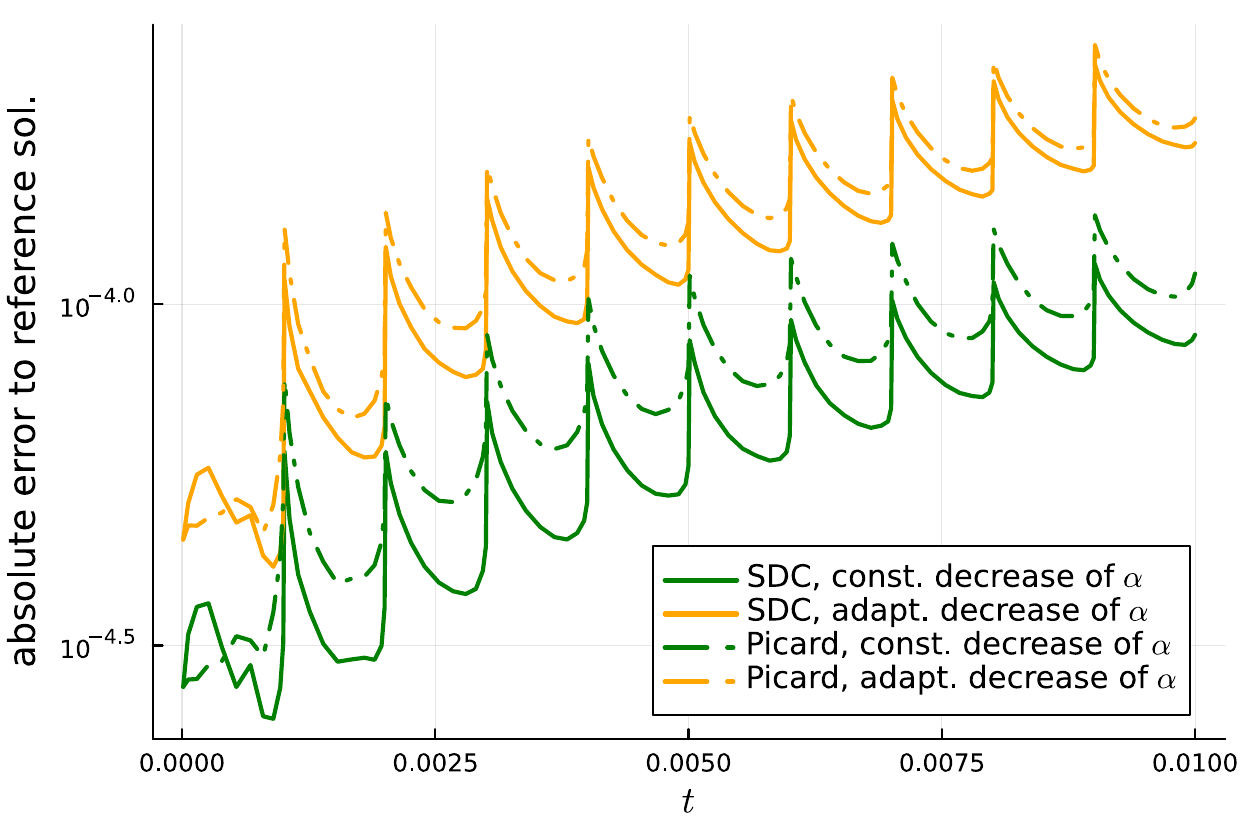}
	\end{minipage}
	\caption{Left: evolution of the ranks using different fixed-point iterations and strategies for decreasing of $\alpha$, with an iteration tolerance of $10^{-4}$, compared to the optimal ranks of a truncated reference solution. Right: corresponding evolution of the absolute errors with respect to the reference solution.}
	\label{fig:parabolic}
\end{figure}

The quasi-optimality of the proposed schemes is highlighted in Figure \ref{fig:parabolic}: here, we compare the resulting ranks and accuracies using different iterative solvers, which are iterated up to an accuracy of $10^{-4}$. The parameter $\theta$ is chosen again such that the numbers of needed iterations per interval are comparable, that is, $\theta = 0.5$ in the case of a constant decrease of the soft threshold and $\theta=0.2$ in the adaptive setting, in which $\alpha$ is decreased whenever
\begin{equation*}
	\bigl\| \mathbf u^{k+1} - \mathbf u^k \bigr\|_J \leq c \bigl\| \mathbf{u}^k - \mathbf{\Phi}(\mathbf{u}^k) \bigr\|_J
\end{equation*} 
is satisfied with $c = \frac25$. Note that $c$ was chosen such that the iterations on an interval were carried out at least twice (in our example: exactly twice), before being reduced. It becomes evident that for all four possible combinations of iterative thresholding schemes the obtained approximation ranks exhibit a quasi-optimal behavior by following, up to a constant only slightly larger than one, the trajectory of the optimal ranks, which are obtained by hard thresholding the full-rank reference solution to a tolerance of $10^{-4}$.
We also observe a linear increase in the absolute error over time, as one would expect from the analysis. 
Note that the errors are larger when $\alpha$ is decreased adaptively, or when using the Picard iteration, which is compensated by ranks slightly closer to optimal for these methods.

Regarding these numerical experiments, it is not completely clear how much of an advantage the SDC scheme yields over the Picard iteration, since the resulting error seems dominated by the value of the soft thresholding parameter. This observation also explains the much smaller absolute error in the first interval: as the ranks are higher, the fixed-point iteration reaches a smaller value of $\alpha$, as suggested by \eqref{error_dominated_by_alpha}, leading to a better accuracy.

\begin{figure}[t] 
	\centering
	\includegraphics[width=0.5\textwidth]{./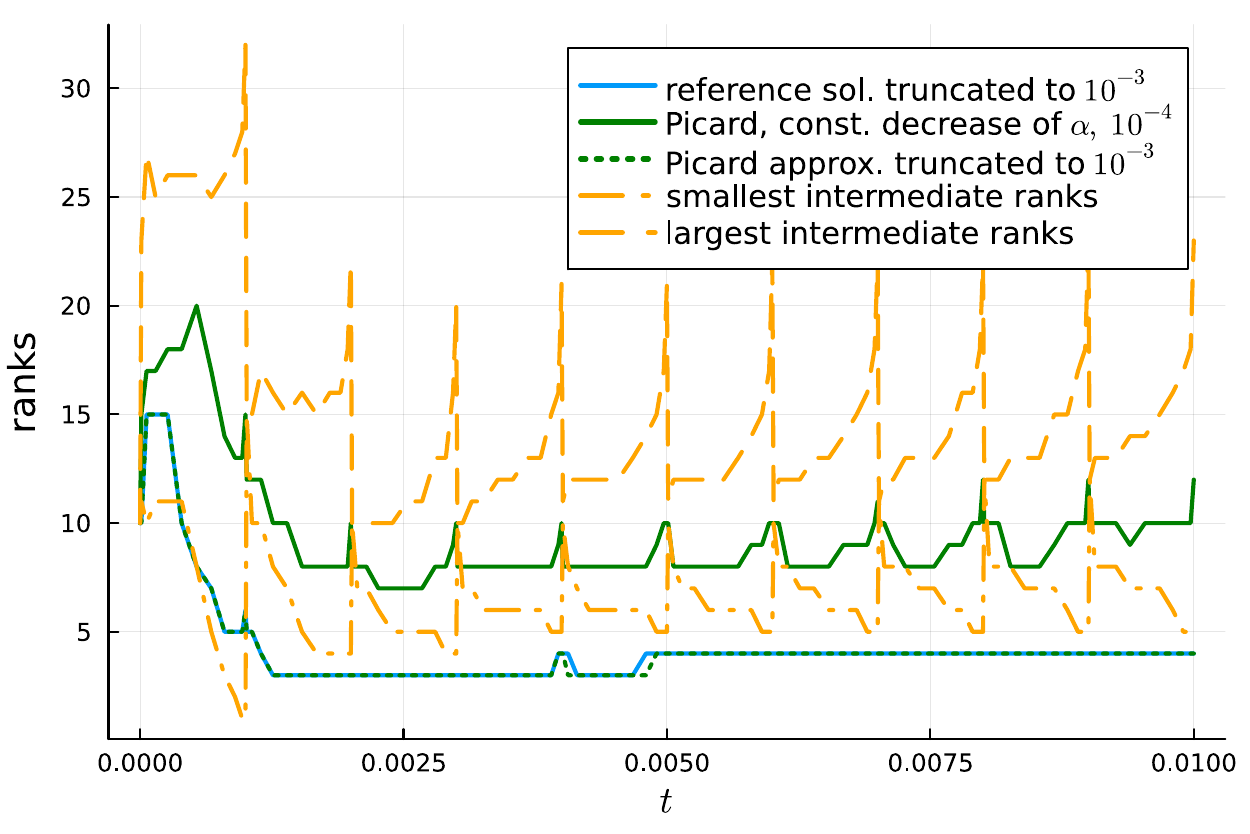}
	\caption{Comparison of the ranks of the computed approximation using Picard iterations and a constant decrease of $\alpha$ with $\theta=0.5$, its truncation to a tolerance of $10^{-3}$, the arising intermediate ranks in the computation of the approximation and the truncated reference solution.}
	\label{fig:parabolic_overview}
\end{figure}

In Figure \ref{fig:parabolic_overview}, we plot the minimal and maximal values of the ranks of all intermediate low-rank matrices appearing in the computation of the fixed-point iterations, at any of the times $t_j$. These ranks remain comparable to the ranks of the scheme, which only involves the last iteration in each interval, up to a factor about 2, corroborating the last statement of Theorem~\ref{thm:main}.
In particular, this shows that the secondary quadrature used in the approximation of the integrals over the exponentials does not cause an unbounded growth of the intermediate ranks.
The figure is obtained for the Picard iterative solver with truncation tolerance $10^{-4}$ and a constant decrease of $\alpha$; using instead an adaptive decrease of $\alpha$ produces almost identical results.
Moreover, the scheme
 is able to recover the optimal ranks almost exactly, when truncating to a slightly lower tolerance of $10^{-3}$ after the whole evolution has been computed.
At two points in time, we even observe smaller ranks for the truncated approximation, pointing out to the fact that the error of numerical integration does not necessarily result in an increase of the ranks.

\section{Conclusion}

In this work, we have proposed and analyzed a high-order time integration scheme that can be ensured to achieve the desired accuracy, while maintaining ranks that are controlled in terms of best approximation ranks. 
In particular, we avoid systematic errors that can occur in schemes based on tangent space projections. 
To the best of our knowledge, this is the first instance of a time integration scheme that achieves such a balance between accuracy and approximation ranks, aside from the method based on a space-time variational formulation in \cite{BF24}.

While the present method does not require such a variational formulation, in aiming for the iterative refinement of solutions on longer time intervals it can be regarded as a compromise between space-time methods and truncated time integration schemes.
As such, a question for future work is how to choose different -- and potentially, adaptive -- basis functions for the time variable.
Note, however, that the present choice of polynomials in time has certain advantages concerning, for example, the well-understood properties of mass and energy preservation.
There also remain some open questions regarding the analysis of the present high-order polynomial approximations, in particular the asymptotics of averaged Lebesgue constants noted in Remark \ref{rem:avglebesgue}.

We will address two generalizations as subjects for further work: in the context of Schr\"o\-din\-ger-type problems, applications to more general classes of potentials and the application to nonlinear problems, such as nonlinear Schr\"odinger equations; and more generally, to further types of different PDEs. In particular, a more detailed analysis of the more involved scheme for the parabolic case remains open at this point.
A second major direction of generalization concerns the extension of the method to high-dimensional problems via higher-order low-rank tensor representations. Although the basic scheme can be extended to this case directly based on results for hierarchical tensors in \cite{bachmayr_adaptive_2015,BS17}, the analysis becomes more subtle due to the need for tracking dependencies on the dimensionality.

\section*{Acknowledgements}

Co-funded by the European Union (ERC, COCOA, 101170147). Views and opinions expressed are however those of the authors only and do not necessarily reflect those of the European Union or the European Research Council. Neither the European Union nor the granting authority can be held responsible for them.

\bibliographystyle{plain}
\bibliography{BDSlowrankevolution.bib}

\section{Appendix}

In the following, we include some auxiliary lemmas needed in the proof of Theorem~\ref{thm:main}, and recall their proofs for the sake of completeness.

\begin{lemma}[Gauss-Legendre quadrature error]
	Let $\omega_j$ and $t_j$ for $1 \leq j \leq J$ be the Gauss-Legendre quadrature weights and nodes on the interval $[0,h]$. For any function $g$ in $ C^{2J}(0,h; L^2(\Omega))$, it holds that
	\begin{equation*}
		\Bigl\| \sum_{j=1}^J \omega_j g(t_j)-\int_0^h g(s) \, ds \Bigr\| \leq \frac{(J!)^4}{(2J+1)!(2J)!^2} \max_{s \in [0,h]} \bigl \|\partial_t^{2J}g(s) \bigr\|  h^{2J+1}.
	\end{equation*}
\label{quadrature_has_order_2J}
\end{lemma}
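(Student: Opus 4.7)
The plan is to reduce the Hilbert-space-valued estimate to the classical scalar Gauss-Legendre error bound via duality. For each $v \in L^2(\Omega)$ with $\|v\|\leq 1$, set $g_v(s) = \langle g(s), v\rangle$, which lies in $C^{2J}[0,h]$ and satisfies $g_v^{(2J)}(s) = \langle \partial_t^{2J} g(s), v\rangle$, hence $|g_v^{(2J)}(s)| \leq \|\partial_t^{2J} g(s)\|$ by Cauchy-Schwarz. Since the $L^2(\Omega)$ norm is characterized by $\|w\| = \sup_{\|v\|\leq 1}|\langle w,v\rangle|$, it is enough to prove the scalar version of the inequality uniformly in $v$ and then take the supremum.

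For the scalar case I would invoke the classical Hermite interpolation argument. Let $L(s) = \prod_{j=1}^J (s - t_j)$ denote the monic node polynomial, and let $H$ be the unique polynomial of degree at most $2J-1$ satisfying $H(t_j) = g_v(t_j)$ and $H'(t_j) = g_v'(t_j)$ for all $j$. Since Gauss-Legendre quadrature is exact on polynomials of degree at most $2J-1$ and $H$ interpolates $g_v$ at the nodes, one has $\int_0^h H(s)\,ds = \sum_{j=1}^J \omega_j g_v(t_j)$. Iterating Rolle's theorem on $g_v - H$, which vanishes to order two at each $t_j$, produces the standard pointwise remainder
\[
g_v(s) - H(s) = \frac{g_v^{(2J)}(\xi_s)}{(2J)!}\, L(s)^2
\]
for some $\xi_s \in (0,h)$. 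Integrating over $[0,h]$ and bounding $|g_v^{(2J)}|$ by $\|\partial_t^{2J} g\|$ then yields
\[
\biggl|\int_0^h g_v(s)\,ds - \sum_{j=1}^J \omega_j g_v(t_j)\biggr| \leq \frac{\max_{s \in [0,h]}\|\partial_t^{2J} g(s)\|}{(2J)!}\int_0^h L(s)^2\,ds.
\]

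The remaining step is to evaluate $\int_0^h L(s)^2\,ds$ explicitly. Since the $t_j$ are the Gauss-Legendre nodes on $[0,h]$, $L$ is the affine pullback of the monic Legendre polynomial $\tilde P_J(x) = \frac{2^J (J!)^2}{(2J)!}\, P_J(x)$ on $[-1,1]$, where $P_J$ is the standard Legendre polynomial with leading coefficient $\frac{(2J)!}{2^J(J!)^2}$. Combining the orthogonality identity $\int_{-1}^1 P_J(x)^2\,dx = \frac{2}{2J+1}$ with the change of variable $s = \frac{h}{2}(x+1)$ gives $\int_0^h L(s)^2\,ds = \frac{(J!)^4}{(2J+1)(2J)!^2}\,h^{2J+1}$. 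Taking the supremum of the scalar bound over $\|v\|\leq 1$ and using $(2J+1)(2J)! = (2J+1)!$ then produces the claimed constant.

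The main obstacle is the passage from the scalar to the Hilbert-space-valued case: the classical scalar proof relies on the intermediate value theorem in the Hermite remainder formula, which has no direct analogue for maps into $L^2(\Omega)$. The duality reduction in the first paragraph is the cleanest way to bypass this; a Peano-kernel representation of the remainder would also work but carries more bookkeeping and yields the same constant.
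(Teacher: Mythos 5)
Your proof is correct and reaches the same constant via the same high-level strategy as the paper's: a doubly-confluent (Hermite) interpolant $H$ of degree $\leq 2J-1$ at the Gauss nodes, exactness of Gauss quadrature on $\mathbb{P}_{2J-1}$ to turn the quadrature error into $\int_0^h (g-H)$, a Rolle-type remainder involving $L(s)^2/(2J)!$, and normalization of the Legendre polynomial to evaluate $\int_0^h L(s)^2\,ds = \frac{(J!)^4}{(2J+1)(2J)!^2}h^{2J+1}$. The one genuine difference is your duality reduction, and it is a genuine improvement. The paper's appendix proof applies the repeated-Rolle argument directly to the Hilbert-space-valued function $h(t)=g(t)-p(t)-\frac{z(t)^2}{z(s)^2}\bigl(g(s)-p(s)\bigr)$ and asserts that its $2J$th derivative vanishes at some $\xi(s)\in[0,h]$; but Rolle's theorem (and the mean value theorem) do not hold for maps into a Hilbert space of dimension $\geq 2$ (e.g.\ $t\mapsto(\cos t,\sin t)$ on $[0,2\pi]$), so that intermediate step is not literally valid even though the resulting norm bound is correct. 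Your argument repairs exactly this: pairing with a fixed $v$ of norm $\leq 1$ yields a genuinely scalar function to which Rolle applies, the $v$-dependence of the resulting $\xi$ is harmlessly absorbed by $\max_{s}\|\partial_t^{2J}g(s)\|$ via Cauchy--Schwarz, and taking the supremum over $v$ recovers the norm. This is precisely the kind of care the paper says it wants in the Hilbert-space-valued setting, so your version is the cleaner one.
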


\begin{proof}
	Denote by $p\in \mathbb P_{2J-1}$ the polynomial that interpolates the function values $g(t_j)$ and the corresponding derivatives $g'(t_j)$ at all $J$ quadrature nodes $t_j$. As the Gauss-Legendre quadrature is exact for elements of $\mathbb P_{2J-1}$, we have
	\[
	\sum_{j=1}^J \omega_j g(t_j) = \sum_{j=1}^J \omega_j p(t_j) =\int_0^h p(s)\,ds,
	\]
	hence
	\begin{equation*}
	\Bigl\| \sum_{j=1}^J \omega_j g(t_j)-\int_0^h g(s) \, ds \Bigr\|
	\leq \int_0^h \|g(s)-p(s)\|\,ds.
	\end{equation*}
	Next, define the function $z(t) = \prod_{j=1}^J (t-t_j)$ and set, for some $s \in [0,h]\setminus\{ t_1,\dots,t_J\}$,
	\begin{equation*}
		h(t)=g(t)-p(t)-\frac{z(t)^2}{z(s)^2}\big(g(s)-p(s)\big).
	\end{equation*}
	As $h(x) = h(t_j) = \partial_t h(t_j) = 0$ for all $1\leq j\leq J$, the function $h$ has at least $2J+1$ zeros, counted with multiplicity, in the interval $[0,h]$.
	By a repeated application of Rolle's theorem, 
	its derivative of order $2J$, given by
	\begin{equation*}
		\partial_t^{2J} h(t)=\partial_t^{2J} g(t)-(2J)! \frac{g(s)-p(s)}{z(s)^2},
	\end{equation*}
	cancels at some point $\xi(s)\in [0,h]$. This implies that
	\begin{equation*}
		\| g(s)-p(s) \| = \biggnorm{ \frac{z(s)^2}{(2J)!}\partial_t^{2J} g(\xi(s))} \leq \frac{z(s)^2}{(2J)!} \max_{\xi \in [0,h]} \bigl\|\partial_t^{2J} g (\xi) \bigr\|.
	\end{equation*}
	Recalling that the $L^2(\Omega)$ normalized Legendre polynomial $L_J$ of degree $J$ on the reference interval $[-1,1]$ can be written as
	\begin{equation*}
	L_J(x)=\gamma_J\prod_{j=1}^J \Big(x-x_j\Big) = \gamma_Jz\Big(\frac{x+1}{2}h\Big),\quad \text{where} \quad \gamma_J=\sqrt\frac{2J+1}{2} \frac{(2J)!}{2^JJ!^2},
	\end{equation*}
	since the Gauss-Legendre nodes $t_j=\frac{x_j+1}{2}h$ are rescaled versions of the roots $x_j$ of $L_J$. We conclude with
	\[
	\int_0^h \frac{z(s)^2}{(2J)!}\,ds=\frac{h/2}{(2J)!}\int_{-1}^1 \bigg(\prod_{j=1}^J \frac{x-x_j}{2}h\bigg)^2dx = \frac{(h/2)^{2J+1}}{(2J)!\gamma_J^2}=\frac{(J!)^4h^{2J+1}}{(2J+1)!(2J)!^2}.\qedhere
	\]
\end{proof}

A similar, but weaker, result also holds within the interval $[0,h]$.

\begin{lemma}[Quadrature error inside the interval]
\label{lem_inner_quadrature}
	For $t\in [0,h]$ and $g\in C^{J}(0,h; L^2(\Omega))$ it holds that
	\begin{equation*}
		\biggnorm{ \int_0^t \Big(g(s) -\sum_{m=1}^J \ell_m(s)g(t_m)\Big)  \, ds } \leq \frac{J!}{\sqrt{2J+1}(2J)!} \max_{s \in [0,h]} \bigl \|\partial_t^Jg(s) \bigr\|  h^{J+1}.
	\end{equation*}
\end{lemma}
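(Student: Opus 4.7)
The plan is to reduce the Hilbert-space-valued statement to the scalar case by pairing with test functionals and then to reuse the computation of $\int_0^h z(s)^2\,ds$ already carried out in the proof of Lemma~\ref{quadrature_has_order_2J}. Concretely, set $z(s) = \prod_{m=1}^J (s-t_m)$, write $w=\int_0^t\bigl(g(s)-\sum_m\ell_m(s)g(t_m)\bigr)\,ds\in L^2(\Omega)$, and pick an arbitrary $\phi\in L^2(\Omega)$ with $\|\phi\|\leq 1$. The scalar function $\tilde g_\phi(s)=\langle g(s),\phi\rangle$ lies in $C^J(0,h;\R)$, so the classical Cauchy-form remainder for Lagrange interpolation at $J$ distinct nodes gives, for every $s\in[0,h]$,
\[
\tilde g_\phi(s)-\sum_{m=1}^J \ell_m(s)\tilde g_\phi(t_m)=\frac{z(s)}{J!}\tilde g_\phi^{(J)}(\xi)
\]
for some $\xi=\xi(s,\phi)\in[0,h]$, and by Cauchy-Schwarz $|\tilde g_\phi^{(J)}(\xi)|=|\langle \partial_t^J g(\xi),\phi\rangle|\leq \max_{s'\in[0,h]}\|\partial_t^J g(s')\|$.

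Next I would integrate this scalar bound from $0$ to $t$ and take the supremum over $\phi$ in the unit ball of $L^2(\Omega)$: since $\langle w,\phi\rangle=\int_0^t(\tilde g_\phi(s)-\sum_m\ell_m(s)\tilde g_\phi(t_m))\,ds$, we obtain
\[
\|w\|=\sup_{\|\phi\|\leq 1}|\langle w,\phi\rangle|\leq \frac{1}{J!}\Bigl(\int_0^t |z(s)|\,ds\Bigr)\,\max_{s'\in[0,h]}\|\partial_t^J g(s')\|.
\]
The remaining task is to estimate $\int_0^t|z(s)|\,ds\leq \int_0^h|z(s)|\,ds$. Applying Cauchy-Schwarz in $s$ and invoking the identity
\[
\int_0^h z(s)^2\,ds=\frac{(J!)^4\,h^{2J+1}}{(2J+1)!\,(2J)!},
\]
which was computed, up to the factor $(2J)!$, at the end of the proof of Lemma~\ref{quadrature_has_order_2J} via the rescaled Legendre polynomial, yields
\[
\int_0^h |z(s)|\,ds\leq \sqrt{h}\,\Bigl(\int_0^h z(s)^2\,ds\Bigr)^{1/2}=\frac{(J!)^2\,h^{J+1}}{(2J)!\sqrt{2J+1}},
\]
where I used $\sqrt{(2J+1)!\,(2J)!}=(2J)!\sqrt{2J+1}$. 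Substituting into the bound on $\|w\|$ produces exactly the constant $J!/(\sqrt{2J+1}\,(2J)!)$ in the statement.

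There is no real obstacle: the only subtlety is ensuring that the scalar error formula (which rests on Rolle's theorem) is applied only after reducing to $\tilde g_\phi$, so no Hilbert-space-valued mean value property is needed; the $L^2(\Omega)$-norm then reappears only through the duality supremum over $\phi$. One could alternatively avoid the pointwise $\xi(s,\phi)$ by using the Peano kernel representation of the interpolation remainder, but the duality route keeps the calculation entirely parallel to the proof of Lemma~\ref{quadrature_has_order_2J} and reuses its key integral identity for $z$.
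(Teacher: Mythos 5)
Your proof is correct, and it shares the paper's overall skeleton: a pointwise interpolation bound of the form $\|g(s)-p(s)\|\le \frac{|z(s)|}{J!}\max_\xi\|\partial_t^J g(\xi)\|$, integration over $[0,t]\subset[0,h]$, Cauchy--Schwarz in $s$, and the computation $\int_0^h z(s)^2\,ds = \tfrac{(J!)^4 h^{2J+1}}{(2J+1)!(2J)!}$ extracted from the Legendre normalization appearing in the proof of Lemma~\ref{quadrature_has_order_2J}; your arithmetic $\sqrt{(2J+1)!\,(2J)!}=(2J)!\sqrt{2J+1}$ then recovers the constant $\tfrac{J!}{\sqrt{2J+1}\,(2J)!}$ exactly.

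The genuine difference is in how the pointwise bound is justified, and here your version is actually tighter. The paper applies iterated Rolle's theorem directly to the $L^2(\Omega)$-valued auxiliary function $t\mapsto g(t)-p(t)-\tfrac{z(t)}{z(s)}\bigl(g(s)-p(s)\bigr)$ to obtain a single $\xi(s)$ at which its $J$-th derivative vanishes. Read literally, that step does not go through: Rolle's theorem (and the mean value theorem) fail for Hilbert-space-valued functions, so no such $\xi(s)$ need exist. Your duality route --- pairing with an arbitrary $\phi$ in the unit ball, applying the scalar Cauchy remainder formula to $\tilde g_\phi=\langle g(\cdot),\phi\rangle$, bounding $|\tilde g_\phi^{(J)}(\xi)|\le\max_s\|\partial_t^J g(s)\|$, integrating, and taking the supremum over $\phi$ --- sidesteps this cleanly. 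You give up a $\phi$-independent intermediate point, but the final estimate only requires the sup-norm of $\partial_t^J g$, so nothing is lost, and the argument becomes fully rigorous in the vector-valued setting. Apart from this repair, the two proofs coincide step for step.
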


\begin{proof}
	Define again $z(t)=\prod_{j=1}^J (t-t_j)$, and let
	\[
	p=\sum_{m=1}^J g(t_m)\ell_m \in \mathbb P_{J-1}
	\]
	be the 
	Lagrange interpolant of $g$.
	For $s\neq t_1,\dots,t_J$, the function
	\[
	h:t \mapsto g(t)-p(t)-\frac{z(t)}{z(s)}\big(g(s)-p(s)\big)
	\]
	vanishes at $s$ and all nodes $t_1,\dots,t_J$, hence there exists some $\xi(s)\in [0,h]$ such that
	\[
	0=\partial_t^Jh(\xi(s))=\partial_t^Jg(\xi(s))-\frac{J!}{z(s)}\big(g(s)-p(s)\big),
	\]
	from which we can conclude that
	\[
	\|g(s)-p(s)\|\leq \frac{|z(s)|}{J!} \max_{\xi \in [0,h]} \|\partial_t^Jg(\xi) \|.
	\]
	We can therefore bound the quadrature error by
	\[
	\biggnorm{ \int_0^t \big(g(s) -p(s)\big)  \, ds }  \leq \int_0^t \|g(s)-p(s)\| \, ds \leq \int_0^h\frac{|z(s)|}{J!} \, ds\, \max_{\xi \in [0,h]} \|\partial_t^Jg(\xi) \|,
	\]
	and conclude the proof by a Cauchy-Schwarz inequality
	\[
	\biggl(\int_0^h\frac{|z(s)|}{J!^2} \, ds \biggr)^2 \leq \frac{h}{J!^2}\int_0^h z(s)^2 ds =\frac{h^{2J+2}}{2J+1}\frac{J!^2}{(2J)!^2}.\qedhere
	\]
\end{proof}

Finally, we need the following comparison between Frobenius distance and $\ell^2$ distance of singular values.
\begin{lemma}[Mirsky inequality] \label{mirsky}
Let $u,v\in L^2(\Omega_1\times \Omega_2)$, and denote their singular value decomposition as
\begin{equation*}
u = \sum_{k=1}^\infty \sigma_k(u)\, u^{(1)}_k \otimes u^{(2)}_k\quad\text{and}\quad  v = \sum_{k=1}^\infty \sigma_k(v)\, v^{(1)}_k \otimes v^{(2)}_k,
\end{equation*}
with the singular values $\sigma_k$ in non-increasing order. It then holds that
\[
\|u-v\|_{L^2(\Omega_1 \times \Omega_2)}^2\geq \sum_{k=1}^\infty |\sigma_k(u)-\sigma_k(v)|^2.
\]
\end{lemma}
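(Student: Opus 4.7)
The plan is to derive Mirsky's inequality from von Neumann's trace inequality. Expanding
\begin{equation*}
\|u-v\|_{L^2(\Omega_1\times\Omega_2)}^2 = \|u\|^2 + \|v\|^2 - 2\,\Re\langle u,v\rangle
\end{equation*}
and using Parseval's identity for the orthonormal tensor families $\{u_k^{(1)}\otimes u_k^{(2)}\}_k$ and $\{v_k^{(1)}\otimes v_k^{(2)}\}_k$ gives $\|u\|^2 = \sum_k \sigma_k(u)^2$ and $\|v\|^2 = \sum_k \sigma_k(v)^2$. The claim then reduces to the trace-type bound
\begin{equation*}
\Re\langle u,v\rangle \leq \sum_k \sigma_k(u)\,\sigma_k(v),
\end{equation*}
because the identity $\sigma_k(u)^2 + \sigma_k(v)^2 - 2\sigma_k(u)\sigma_k(v) = |\sigma_k(u) - \sigma_k(v)|^2$ then yields the stated inequality term by term.

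For the trace bound, expanding both Schmidt decompositions inside the $L^2$ inner product and using orthonormality of the component families gives
\begin{equation*}
\langle u,v\rangle = \sum_{j,k} \sigma_j(u)\,\sigma_k(v)\,\alpha_{jk}\,\beta_{jk},\qquad \alpha_{jk}=\langle u_j^{(1)},v_k^{(1)}\rangle,\quad \beta_{jk}=\langle u_j^{(2)},v_k^{(2)}\rangle.
\end{equation*}
Bessel's inequality ensures that every row and every column of $(\alpha_{jk})$ and $(\beta_{jk})$ has $\ell^2$-norm at most one. Setting $c_{jk}=|\alpha_{jk}\beta_{jk}|$, a Cauchy-Schwarz in each row and each column then shows $\sum_j c_{jk}\leq 1$ and $\sum_k c_{jk}\leq 1$, so $(c_{jk})$ is doubly substochastic, and $\Re\langle u,v\rangle \leq \sum_{j,k} \sigma_j(u)\sigma_k(v) c_{jk}$.

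The main obstacle is then the rearrangement/majorization step: for non-increasing nonnegative sequences $(a_k)$ and $(b_k)$ and any doubly substochastic matrix $(c_{jk})$ one has $\sum_{j,k} a_j b_k c_{jk}\leq \sum_k a_k b_k$. I would prove this by truncating to the leading $N\times N$ block, completing the resulting finite substochastic matrix to a doubly stochastic one by adjoining slack rows and columns (which contribute nothing since the corresponding $a_k,b_k$ are set to zero), applying the Birkhoff-von Neumann theorem to decompose it as a convex combination of permutation matrices, invoking the elementary rearrangement inequality for each permutation, and passing to the limit $N\to\infty$ by dominated convergence, which is legitimate since $\sigma_k(u), \sigma_k(v)\in \ell^2$. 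Specializing to $a_k=\sigma_k(u)$ and $b_k=\sigma_k(v)$ yields the trace bound, and combining with the opening identity completes the proof.
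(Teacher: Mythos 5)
Your proof is correct, and it takes a genuinely different route from the paper's. You reduce Mirsky to the von Neumann trace inequality $\Re\langle u,v\rangle\leq\sum_k\sigma_k(u)\sigma_k(v)$ by expanding $\|u-v\|^2$, then establish that inequality by writing the overlap matrix $c_{jk}=\lvert\langle u_j^{(1)},v_k^{(1)}\rangle\langle u_j^{(2)},v_k^{(2)}\rangle\rvert$, showing it is doubly substochastic via Bessel and Cauchy--Schwarz, and invoking the rearrangement inequality for substochastic matrices through finite truncation, completion to a doubly stochastic block, and Birkhoff--von Neumann. The paper instead dilates $u$ and $v$ to hermitian block operators $U=\bigl(\begin{smallmatrix}0&u\\u^*&0\end{smallmatrix}\bigr)$, $V=\bigl(\begin{smallmatrix}0&v\\v^*&0\end{smallmatrix}\bigr)$ whose eigenvalues are $\pm\sigma_k$, writes $\|U-V\|_F^2=\sum_{k,\ell}p_{k,\ell}(\lambda_k-\mu_\ell)^2$ with $p_{k,\ell}=\lvert U_k^*V_\ell\rvert^2$, observes $(p_{k,\ell})$ is exactly bistochastic (rows and columns summing to one, not merely at most one), and applies the permutation rearrangement inequality to each $P^\sigma$ in the convex hull. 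Your route buys you the following: by working only with the nonnegative singular values you avoid the signed eigenvalue bookkeeping, and your truncate-complete-and-pass-to-the-limit step makes the infinite-dimensional appeal to Birkhoff--von Neumann completely explicit, whereas the paper asserts somewhat informally that an infinite bistochastic matrix lies in the closed convex hull of permutation matrices. The paper's hermitian dilation buys the cleaner fact that the overlap matrix is genuinely bistochastic, sparing the doubly-substochastic-to-doubly-stochastic completion. Both are standard and well-known strategies for Mirsky's inequality, so the mathematical content is equivalent, but the two organizations of the argument are genuinely distinct.
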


This result was originally shown to hold in a finite-dimensional setting. Its extension to Hilbert-Schmidt operators, or equivalently to $L^2$ functions on a separable tensor product domain, can be found in \cite[Corollary 5.3]{ASMarkus_1964}. We include a self-contained proof below.

\begin{proof}
First, we restrict ourselves to hermitian operators, by considering eigendecompositions
\[
U:=\begin{pmatrix} 0& u\\u^*&0\end{pmatrix}=\sum_{k\in \Z\setminus\{0\}} \lambda_k\, U_kU_k^*\quad\text{and}\quad V:=\begin{pmatrix} 0& v\\v^*&0\end{pmatrix}=\sum_{k\in \Z\setminus\{0\}} \mu_k\, V_kV_k^*
\]
with
\[
\lambda_{\pm k}=\pm \sigma_k(u),\qquad U_{\pm k}=\frac{1}{\sqrt 2}\begin{pmatrix} u^{(1)}_k \\ \pm u^{(2)}_k\end{pmatrix},\qquad \mu_{\pm k}=\pm \sigma_k(v)\quad\text{and}\quad V_{\pm k}=\frac{1}{\sqrt 2}\begin{pmatrix} v^{(1)}_k \\ \pm v^{(2)}_k\end{pmatrix}.
\]
Using $(U_k)_{k \in \Z\setminus\{0\}}$ and $(V_\ell^*)_{\ell \in \Z\setminus\{0\}}$ as orthonormal bases for the columns and rows, we obtain in the Frobenius norm 
\[
\|U-V\|_{F}^2
=\bigg\|\sum_{k,\ell\in \Z\setminus\{0\}} (\lambda_k-\mu_\ell)\,U_k(U_k^*V_\ell)V_\ell^*\bigg\|_F^2
=\sum_{k,\ell\in \Z\setminus\{0\}} p_{k,\ell} (\lambda_k-\mu_\ell )^2,
\]
where  $p_{k,\ell}=|U_k^*V_\ell|^2$.
The matrix $(p_{k,\ell})_{k, \ell \in \Z\setminus\{0\}}$ is bistochastic, that is, it has non-negative entries satisfying
\[
\sum_{\ell\in \Z\setminus\{0\}} p_{k,\ell}=U_k^*\!\sum_{\ell\in \Z\setminus\{0\}} V_\ell V_\ell^*\; U_k=\|U_k\|^2=1, \quad \text{for all} \quad k \in \Z\setminus\{0\},
\]
and
\[
\sum_{k\in \Z\setminus\{0\}} p_{k,\ell}=V_\ell^*\!\sum_{k\in \Z\setminus\{0\}} U_kU_k^* \;V_\ell=\|V_\ell\|^2=1, \quad \text{for all} \quad \ell \in \Z\setminus\{0\}.
\]
Therefore, it is in the closed convex hull of permutation matrices $p^\sigma=(\mathds 1_{\ell=\sigma(k)})_{k,\ell}$, with $\sigma$ a permutation of $\Z\setminus\{0\}$.
For such matrices, a simple reordering inequality shows that
\[
\sum_{k,\ell\in \Z\setminus\{0\}} p^\sigma_{k,\ell}(\lambda_k-\mu_\ell)^2=\sum_{k\in \Z\setminus\{0\}} (\lambda_k-\mu_{\sigma(k)})^2=\sum_{k\in \Z\setminus\{0\}}\lambda_k^2-2\lambda_k\mu_{\sigma(k)}+\mu_{\sigma(k)}^2\geq \sum_{k\in \Z\setminus\{0\}} (\lambda_k-\mu_{k})^2.
\]
Taking convex combinations, this remains true with $p$ instead of $p^\sigma$, and we conclude the inequality with
\begin{equation*}
	2\|u-v\|_{L^2(\Omega_1 \times \Omega_2)}^2=\|U-V\|_{F}^2\geq \sum_{k\in \Z\setminus\{0\}} (\lambda_k-\mu_{k})^2=2\sum_{k=1}^\infty |\sigma_k(u)-\sigma_k(v)|^2. \qedhere
\end{equation*}
\end{proof}

\end{document}